\pdfoutput=1

\documentclass[pdflatex,sn-mathphys-num]{sn-jnl}

\usepackage{graphicx}
\usepackage{amsmath,amssymb,amsfonts}
\usepackage{amsthm}
\usepackage{amsopn}
\usepackage{mathrsfs}
\usepackage[title]{appendix}
\usepackage{booktabs}
\usepackage{mathtools}

\theoremstyle{thmstyleone}
\newtheorem{theorem}{Theorem}[section]
\newtheorem{proposition}[theorem]{Proposition}
\newtheorem{lemma}[theorem]{Lemma}
\newtheorem{corollary}[theorem]{Corollary}

\theoremstyle{thmstyletwo}
\newtheorem{remark}[theorem]{Remark}

\theoremstyle{thmstylethree}
\newtheorem{definition}[theorem]{Definition}

\newcommand{\GL}{\mathrm{GL}}

\newcommand{\M}{\mathcal{M}}
\newcommand{\C}{\mathcal{C}}
\newcommand{\E}{\mathcal{E}}
\newcommand{\R}{\mathbb{R}}
\newcommand{\Exp}{\operatorname{Exp}}
\newcommand{\Log}{\operatorname{Log}}
\newcommand{\dist}{\operatorname{dist}}
\newcommand{\diam}{\operatorname{diam}}
\newcommand{\Sec}{\operatorname{Sec}}
\newcommand{\DGUC}{\operatorname{DGUC}}
\newcommand{\LMO}{\operatorname{LMO}}
\newcommand{\SPD}{\mathbb{S}_{++}}
\newcommand{\ip}[2]{\left\langle #1,#2\right\rangle}
\newcommand{\norm}[1]{\left\lVert #1\right\rVert}
\DeclareMathOperator*{\argmin}{arg\,min}
\DeclareMathOperator*{\argmax}{arg\,max}
\DeclareMathOperator{\tr}{tr}
\DeclareMathOperator{\cond}{cond}
\DeclareMathOperator{\diag}{diag}

\begin{document}

\title[Curvature, Scaling, and Complexity]{Curvature, Scaling, and Complexity in Riemannian Projection-Free Optimization}

\author*{\fnm{Binh T.} \sur{Nguyen}}\email{ngtbinh@hcmus.edu.vn}

\affil{\orgdiv{Faculty of Mathematics and Computer Science},
 \orgname{University of Science, Vietnam National University Ho Chi Minh City},
 \city{Ho Chi Minh City}, \country{Vietnam}}

\abstract{
Projection-free methods replace projections by linear minimization oracles,
making their convergence fundamentally sensitive to the geometry of the
feasible region. We develop a geometric framework for this dependence in
Riemannian Frank--Wolfe optimization. A power-type double-geodesic modulus
quantifies the intrinsic geometry of the feasible set, and an exact
transfer principle shows that this modulus induces a Frank--Wolfe scaling
margin modulated by the differential of the Riemannian logarithm map. The
resulting factorization separates the contributions of feasible-set geometry
and ambient Riemannian distortion. Rauch comparison provides explicit
curvature-dependent certificates for this distortion, while on the
positive-definite cone equipped with the affine-invariant metric the
logarithm differential admits an exact spectral characterization in terms
of relative eigenvalue spread. Combined with a quadratic error bound, the
power-type geometry yields the convergence rate
$O(t^{-p/(p-1)})$ for the gap-feedback short-step policy. For every fixed
$p>3$, a compact feasible set in a hyperbolic plane attains the matching
rate $\Theta(t^{-p/(p-1)})$ under the same policy. The geometry-derived
exponent $p/(p-1)$ is therefore sharp for the gap-feedback short-step
policy over the geometry-controlled class considered here.
}

\keywords{Riemannian optimization, Frank--Wolfe method, projection-free optimization, uniformly convex sets, Jacobi fields, complexity lower bounds}

\pacs[MSC Classification]{90C25, 90C30, 49M37, 53B20}

\maketitle

\section{Introduction}
\label{sec:introduction}

Projection-free methods replace potentially expensive projections onto a
feasible region by linear minimization oracles. This distinction is
particularly useful when linear optimization over the constraint set is
substantially cheaper than projection; see more details, for instance,
\cite{combettes2021complexity}. In Euclidean optimization, the geometry of
the feasible region can have a decisive effect on the resulting
complexity. Strongly and uniformly convex feasible sets provide a prominent
example: their boundary geometry induces scaling properties that strengthen
the basic Frank--Wolfe descent mechanism and can yield rates faster than
the classical $O(1/t)$ behavior
\cite{garber2015faster,kerdreux2021projection,kerdreux2026local}. For a
broader perspective on projection-free first-order optimization, we refer
to \cite{bomze2024frankwolfe}.

The same projection-free principle extends naturally to optimization on
Riemannian manifolds. More generally, first-order complexity in
geodesically convex optimization is known to depend on the ambient
geometry, with sectional-curvature bounds entering convergence estimates
through metric comparison and distortion
\cite{zhang2016firstorder,zhang2018estimate,ferreira2019gradient}. Within
the projection-free setting, Weber and Sra
\cite{weber2023riemannian} developed Riemannian Frank--Wolfe (RFW) methods
for constrained Riemannian optimization and established nonasymptotic
guarantees, including applications on the manifold of positive-definite
matrices. More recently, Scieur et al.~\cite{scieur2026strongly}
introduced several notions of strongly convex feasible sets on Riemannian
manifolds, related these notions to Riemannian scaling inequalities, and
used the resulting structure to derive fast RFW convergence.
Chen~\cite{chen2026openloop} subsequently studied fast RFW rates under
local error bounds and directional or exact scaling conditions.

These developments identify scaling inequalities as a central mechanism
behind fast Riemannian Frank--Wolfe convergence. They also expose a
geometric question that is not resolved by assuming a scaling inequality
directly: which intrinsic properties of a Riemannian feasible set generate
such a scaling law, and how does the ambient Riemannian geometry modify
the resulting margin?

There is a well-developed Euclidean analogue of this question. Uniform
convexity of a feasible set can be quantified through a power-type modulus,
and this geometry is closely connected to scaling and error-bound
mechanisms in Frank--Wolfe analysis
\cite{kerdreux2021projection,kerdreux2022restarting,kerdreux2026local}.
Recent lower-bound results have further clarified the corresponding
complexity frontier. In particular, for $p$-uniformly convex Euclidean
sets, Halbey, Roux, and Pokutta~\cite{halbey2026curvature} establish
matching $\Omega(t^{-p/(p-1)})$ behavior for $p\geq 3$ under exact
line-search or short-step policies.

Transferring this picture to a manifold requires more than replacing line
segments by geodesics. A feasible perturbation constructed near the
endpoint of a geodesic is observed from the oracle base point through the
Riemannian logarithm map. Its first-order effect is therefore transformed
by the differential of that map, equivalently by the associated
Jacobi-field geometry. This creates two distinct geometric effects. The
intrinsic geometry of the feasible set determines the size of admissible
transverse perturbations, while the ambient manifold determines how those
perturbations are distorted when viewed from the oracle base point.

The main motivation of this paper is to identify this interaction and use it to
derive projection-free complexity from feasible-set geometry. We introduce
a power-type double-geodesic uniform convexity condition, denoted by
$\DGUC_p(\alpha)$, under which transverse feasible perturbations along a
geodesic have size proportional to
$D^{1-p}d(x,y)^p$, where $d(x,y)$ denotes the Riemannian distance between
$x$ and $y$, and $D$ is the diameter of the feasible set with respect to
this distance. More generally, the geometry is formulated through an
intrinsic double-geodesic modulus, with the power-type condition providing
the specific family used in the complexity analysis.

Our main geometric result is an exact transfer from this feasible-set
modulus to a Frank--Wolfe scaling inequality. Specifically, if $v$ solves
the Riemannian linear minimization oracle at $x$ in direction $w$, then,
under the local regularity assumptions required for the logarithm map,
\begin{equation}
\label{eq:intro-jacobi-scaling}
    \ip{w}{\Log_x v}_x
    \geq
    \alpha D^{1-p}d(x,v)^p
    \norm{(D\Log_x)_v^{*}w}_v .
\end{equation}
The power-type factor in
\eqref{eq:intro-jacobi-scaling} can be replaced by the underlying
feasible-set modulus, so the transfer principle itself is not tied to a
particular exponent.

Inequality~\eqref{eq:intro-jacobi-scaling} makes the two geometric
contributions explicit. The feasible-set modulus determines the intrinsic
scaling margin, while the factor
$
\displaystyle
    \frac{
        \norm{(D\Log_x)_v^{*}w}_v
    }{
        \norm{w}_x
    }
$
measures the ambient differential distortion in the oracle direction. We
refer to this exact geometry-to-scaling transfer as the \emph{Jacobi
scaling principle}. Its proof is local and follows from a one-sided
feasible perturbation at the oracle endpoint together with the first
variation of the logarithm map. In particular, the transfer theorem does
not require a sectional-curvature bound or smoothness of the boundary of
the feasible set.

The exact transfer makes comparison geometry directly applicable to
projection-free scaling. Under a lower sectional-curvature bound, Rauch
comparison provides an explicit lower certificate for the logarithm
differential. In constant sectional curvature $-\kappa^2$, the
corresponding transverse factor is
$
   \displaystyle  \frac{\kappa r}{\sinh(\kappa r)}.
$
Ambient curvature therefore modifies the scaling constant while preserving
the power inherited from the feasible-set modulus. This role of curvature
differs from its appearance in standard first-order Riemannian complexity
estimates \cite{zhang2018estimate,ferreira2019gradient}: here, it quantifies
how feasible-set geometry is transferred into a Frank--Wolfe scaling
margin.

On the manifold of symmetric positive-definite matrices equipped with the
affine-invariant metric, the same mechanism admits an exact spectral
description. If
\[
    X^{-1/2}YX^{-1/2}=\exp(A),
\]
then the singular values of the logarithm differential are determined by
the relative root spreads $|a_i-a_j|/2$. In particular, writing $q$ for
the condition number of $X^{-1/2}YX^{-1/2}$, the smallest singular value
is
\[
    \sigma_{\min}\bigl((D\Log_X)_Y\bigr)
    =
    \frac{\sqrt q\,\log q}{q-1},
\]
with continuous value $1$ at $q=1$. The resulting certificate depends on
relative spectral anisotropy rather than geodesic distance alone and can
therefore be substantially sharper than a global
curvature--diameter certificate.

We next translate the geometry-derived scaling inequality into RFW
complexity. For smooth geodesically convex objectives satisfying a
quadratic error bound, the power-$p$ scaling induced by
$\DGUC_p(\alpha)$ yields
\begin{equation}
\label{eq:intro-upper-rate}
    f(x_t)-f^\star
    =
    O\!\left(t^{-p/(p-1)}\right)
\end{equation}
under the gap-feedback short-step policy. The exponent agrees with the
complexity regime known for uniformly convex feasible sets in Euclidean
space. Here, the scaling inequality is derived from the intrinsic geometry
of the feasible region, with the ambient Riemannian distortion identified
explicitly through the logarithm differential. This provides a geometric
foundation for scaling-based RFW analyses and complements Euclidean
error-bound results such as \cite{kerdreux2022restarting} and recent
Riemannian analyses based directly on scaling and error-bound assumptions
\cite{chen2026openloop}.

We finally establish sharpness of this exponent for the same short-step
policy on a genuinely curved family. For every fixed $p>3$, we construct a
compact geodesically convex feasible set in a hyperbolic plane of
sectional curvature $-\kappa^2$ and consider the boundary-centered
objective
\[
    f_\star(x)
    =
    \frac12 d_\kappa(x,x^\star)^2 .
\]
After normalization, the oracle dynamics form a curvature-dependent
perturbation of the Euclidean slow map. For sufficiently small fixed
$\rho$, the attracting slow state persists under the hyperbolic
perturbation, and the same gap-feedback short-step policy satisfies
\[
    f_\star(x_t)-f_\star(x^\star)
    =
    \Theta\!\left(t^{-p/(p-1)}\right).
\]
Therefore, the geometry-derived upper exponent is attained on a fixed genuinely
negatively curved instance by the same step policy. This establishes
policy-level sharpness of the exponent over the geometry-controlled class
considered here. The corresponding oracle-level complexity question, in
which one ranges over broader classes of Riemannian projection-free
algorithms, remains a distinct problem.

\paragraph{Contributions.}
The main contributions of the paper can be given as follows.
\begin{enumerate}
    \item
    We introduce a power-type double-geodesic uniform convexity condition
    for Riemannian feasible sets and formulate the underlying geometry
    through an intrinsic feasible-set modulus. This separates the
    curvature of the feasible region from that of the ambient manifold.

    \item
    We establish an exact modulus-level geometry-to-scaling transfer. At
    an LMO endpoint, the feasible-set modulus is transported into the
    Frank--Wolfe margin through $(D\Log_x)_v^*$. This gives the Jacobi
    scaling principle in \eqref{eq:intro-jacobi-scaling} and naturally
    leads to an LMO-restricted distortion coefficient that can be sharper
    than a global singular-value certificate.

    \item
    We derive explicit geometric certificates for the logarithmic
    distortion. Rauch comparison gives curvature-dependent lower
    certificates, while on the affine-invariant positive-definite cone, we
    obtain an exact root-space description governed by relative spectral
    spread.

    \item
    We combine the geometry-derived scaling inequality with a quadratic
    error bound to obtain the rate
    $O(t^{-p/(p-1)})$. For every fixed $p>3$, a genuinely negatively
    curved hyperbolic family attains the matching
    $\Theta(t^{-p/(p-1)})$ rate under the same gap-feedback short-step
    policy.
\end{enumerate}

\paragraph{Related work and positioning.}
Our results lie at the intersection of four strands of literature.

\emph{Riemannian first-order and projection-free optimization.}
First-order methods for geodesically convex optimization on Hadamard
manifolds have been analyzed with explicit dependence on sectional
curvature
\cite{zhang2016firstorder,zhang2018estimate,ferreira2019gradient}.
Projection-free Riemannian optimization was subsequently developed by
Weber and Sra, first in stochastic and nonconvex settings and then in a
deterministic RFW framework with sublinear and, under additional
assumptions, linear convergence guarantees
\cite{weber2022stochastic,weber2023riemannian}. More recent work studies
composite Riemannian conditional-gradient methods and alternative
algorithmic regimes \cite{chen2024composite}. These contributions establish
the algorithmic setting in which our results operate. Our focus is on
deriving the scaling margin from geometric properties of the feasible
region.

\emph{Curved feasible sets and Frank--Wolfe scaling.}
In Euclidean spaces, strongly convex feasible sets can yield accelerated
Frank--Wolfe convergence \cite{garber2015faster}. Power-type uniformly
convex sets extend this phenomenon and relate feasible-set moduli to
scaling inequalities and H\"olderian error-bound mechanisms
\cite{kerdreux2021projection,kerdreux2022restarting,kerdreux2026local}.
For an overview of projection-free methods and the role of feasible-set
geometry, one see more details at  \cite{bomze2024frankwolfe}. Uniform convexity also has an
established and distinct meaning for ambient geodesic metric spaces. The
condition studied here instead concerns the geometry of a feasible subset
and is designed to produce a linear-minimization-oracle scaling margin.

The closest geometric precursor is the work of Scieur et
al.~\cite{scieur2026strongly}, who introduce several quadratic notions of
strongly convex sets on Riemannian manifolds and establish implications
toward exact or approximate Riemannian scaling inequalities. Their
double-geodesic condition is particularly close to our starting point. The
present framework extends this perspective to a general feasible-set
modulus, including power laws beyond the quadratic case, and identifies
$(D\Log_x)_v^*$ explicitly as the endpoint transfer operator. This exposes
the ambient Jacobi distortion separately from the intrinsic feasible-set
modulus.

\emph{Error bounds and scaling-based fast rates.}
A complementary line of work begins with an error bound or scaling
inequality and derives faster conditional-gradient convergence. In
Euclidean space, restarting under H\"olderian error bounds provides one
such mechanism \cite{kerdreux2022restarting}. In the Riemannian setting,
Chen~\cite{chen2026openloop} establishes fast rates under local
H\"olderian error bounds and directional or exact scaling conditions,
including open-loop and gap-feedback policies, and gives direct scaling
certificates for selected spherical sets. Our results address the
geometric origin of such scaling inequalities: the margin is derived from
the feasible-set modulus, and its ambient distortion is quantified through
the differential geometry of the logarithm map.

\emph{Lower bounds and sharpness.}
Recent work has substantially refined the Euclidean Frank--Wolfe
lower-bound picture. Halbey et al.~\cite{halbey2026stronglower} give a
constructive $\Omega(1/\sqrt{\varepsilon})$ lower bound even for smooth
strongly convex objectives over strongly convex feasible sets. Grimmer and
Liu~\cite{grimmer2026lmo} establish an oracle lower bound for a broader
deterministic LMO model. For $p$-uniformly convex sets, Halbey, Roux, and
Pokutta~\cite{halbey2026curvature} obtain matching
$\Omega(t^{-p/(p-1)})$ behavior under exact line-search or short-step
policies. These results establish the corresponding Euclidean complexity
frontier. Our hyperbolic construction shows that the same power law
persists after the feasible-set geometry, logarithmic distortion, and
oracle dynamics are placed on a genuinely negatively curved manifold. The
matching result uses the same gap-feedback short-step policy as the upper
bound and establishes policy-level sharpness on the geometry-controlled
class considered here.

Taken together, the existing literature provides the main surrounding
ingredients: Riemannian Frank--Wolfe algorithms, Euclidean uniformly convex
feasible sets, Riemannian strong-set notions, scaling-based convergence
analyses, and Euclidean lower bounds. To the best of our knowledge, these
ingredients have not previously been combined through a modulus-level
transfer in which feasible-set geometry is transported into the
Frank--Wolfe margin by the differential of the logarithm map, with the
resulting distortion quantified by curvature and spectral certificates.

The framework developed here makes this connection explicit. It derives
Frank--Wolfe scaling from intrinsic feasible-set geometry, identifies the
logarithm differential as the exact transfer operator, quantifies its
distortion through comparison and spectral geometry, and connects the
resulting scaling law to projection-free complexity. The hyperbolic
construction further shows that the induced power law is attained under
genuine negative curvature by the same short-step policy. The contribution
is therefore a geometry--scaling--complexity framework rather than an
isolated convergence exponent.

The remainder of the paper is organized as follows.
Section~\ref{sec:preliminaries} introduces the geometric notation, the RFW
oracle, and the scaling and error-bound conditions used in the analysis.
Section~\ref{sec:power-geometry} develops power-type double-geodesic
feasible-set geometry. Section~\ref{sec:jacobi-scaling} establishes the
exact Jacobi scaling principle and its LMO-restricted form.
Section~\ref{sec:certificates} derives curvature and spectral certificates,
including the positive-definite case.
Section~\ref{sec:upper-complexity} develops the geometry-derived RFW upper
bound. Section~\ref{sec:lower-bound} constructs the hyperbolic hard family
and establishes policy-level sharpness for $p>3$.
Section~\ref{sec:numerics} provides numerical illustrations of the rate and
the geometric certificates. Section~\ref{sec:discussion} discusses the
resulting geometry--scaling--complexity framework and directions for
further work.

\section{Preliminaries and problem setting}
\label{sec:preliminaries}

We consider the constrained optimization problem
\begin{equation}
\label{eq:problem}
    \min_{x\in\C} f(x),
\end{equation}
where $(\M,g)$ is a finite-dimensional Riemannian manifold,
$\C\subset\M$ is a feasible set with finite positive diameter
\[
    D:=\diam(\C)\in(0,\infty),
\]
and $f:\C\to\R$ is differentiable. We denote by $d$ the Riemannian
distance, by $T_x\M$ the tangent space at $x$, by
$\ip{\cdot}{\cdot}_x$ the Riemannian inner product, and by
$\norm{\cdot}_x$ the induced norm. The exponential and logarithm maps are
denoted by $\Exp_x$ and $\Log_x$, respectively.

The geometric assumptions used below are local unless explicitly stated
otherwise. Whenever $\Log_x y$ is invoked, $x$ and $y$ are assumed to be
joined by the specified minimizing geodesic and to lie in a normal
configuration on which the logarithm is single-valued. Whenever
$(D\Log_x)_y$ is invoked, we additionally assume that $\Log_x$ is
differentiable at $y$. Completeness, sectional-curvature bounds, or other
global assumptions are imposed only in the results in which they are needed.

\subsection{Geodesics, logarithms, and differential distortion}
\label{subsec:riemannian-prelim}

For $x,y\in\M$ joined by a unique minimizing geodesic, we use the
constant-speed parametrization
\begin{equation}
\label{eq:geodesic-xy}
    \gamma_{xy}(t)
    :=
    \Exp_x\!\bigl(t\Log_x y\bigr),
    \qquad t\in[0,1].
\end{equation}
Thus,
\[
    \gamma_{xy}(0)=x,
    \qquad
    \gamma_{xy}(1)=y,
    \qquad
    d(x,y)=\norm{\Log_x y}_x.
\]
Whenever $\Log_x$ is differentiable at $y$, its differential is the linear
map
\[
    (D\Log_x)_y:T_y\M\longrightarrow T_x\M.
\]
Its metric adjoint
\[
    (D\Log_x)_y^*:T_x\M\longrightarrow T_y\M
\]
is characterized by
\begin{equation}
\label{eq:adjoint-def}
    \ip{w}{(D\Log_x)_y z}_x
    =
    \ip{(D\Log_x)_y^*w}{z}_y,
    \qquad
    w\in T_x\M,\quad z\in T_y\M.
\end{equation}
This adjoint is the natural transfer operator that will relate feasible
endpoint perturbations to the Frank--Wolfe scaling margin.
One direction is unaffected by this differential distortion. Let
$\xi=\Log_x y$ and
$
\displaystyle    \gamma(t)=\Exp_x(t\xi).
$
Since
$
    \Log_x(\gamma(t))=t\xi
$
along the corresponding normal geodesic, differentiation at $t=1$ yields
the radial identity
\begin{equation}
\label{eq:radial-log-identity}
    (D\Log_x)_y\dot\gamma(1)=\xi.
\end{equation}
The behavior in directions transverse to $\dot\gamma(1)$ is instead
governed by Jacobi fields and therefore reflects the ambient Riemannian
geometry. Section~\ref{sec:certificates} quantifies this distortion using
curvature comparison and, for the affine-invariant positive-definite
geometry, spectral information.

\subsection{The Riemannian Frank--Wolfe oracle}
\label{subsec:rfw}

Following the Riemannian Frank--Wolfe framework
\cite{weber2023riemannian}, for $x\in\C$ and $w\in T_x\M$, we define the
logarithmic linear minimization oracle by
\begin{equation}
\label{eq:lmo-general}
    \operatorname{LMO}_{\C}(x,w)
    :=
    \argmax_{v\in\C}
    \ip{w}{\Log_x v}_x.
\end{equation}
The maximization convention in \eqref{eq:lmo-general} is convenient for the
geometric statements below. In the optimization problem
\eqref{eq:problem}, we take
\[
    w=-\nabla f(x),
\]
in which case \eqref{eq:lmo-general} is equivalent to
\begin{equation}
\label{eq:lmo-gradient}
    v\in
    \argmin_{z\in\C}
    \ip{\nabla f(x)}{\Log_x z}_x.
\end{equation}
Existence of an oracle solution is assumed whenever
$\operatorname{LMO}_{\C}(x,w)$ is invoked. In particular, compactness of
$\C$ is not required for the local geometric transfer result merely to be
stated. Compactness will be imposed where it provides the existence and
uniformity properties needed in the algorithmic and lower-bound results.
Given $x_t\in\C$ and an oracle point
\[
    v_t\in
    \operatorname{LMO}_{\C}
    \bigl(x_t,-\nabla f(x_t)\bigr),
\]
the RFW update follows the minimizing geodesic from $x_t$ to $v_t$:
\begin{equation}
\label{eq:rfw-update}
    x_{t+1}
    =
    \Exp_{x_t}
    \!\bigl(\eta_t\Log_{x_t}v_t\bigr),
    \qquad
    \eta_t\in[0,1].
\end{equation}
We use the associated Frank--Wolfe gap
\begin{equation}
\label{eq:fw-gap}
    g_t
    :=
    -\ip{\nabla f(x_t)}
             {\Log_{x_t}v_t}_{x_t}
\end{equation}
and oracle radius
\begin{equation}
\label{eq:oracle-radius}
    r_t:=d(x_t,v_t).
\end{equation}
Suppose that $f$ is geodesically convex on the relevant feasible geodesics
and that the solution set
\[
    X^\star
    :=
    \argmin_{x\in\C} f(x)
\]
is nonempty. Writing
\[
    f^\star:=\min_{x\in\C}f(x),
\]
the first-order inequality for geodesic convexity gives
\begin{equation}
\label{eq:gap-dominates-subopt}
    g_t
    \geq
    f(x_t)-f^\star.
\end{equation}
Indeed, for any $x^\star\in X^\star$,
\[
    f(x^\star)
    \geq
    f(x_t)
    +
    \ip{\nabla f(x_t)}
       {\Log_{x_t}x^\star}_{x_t},
\]
whereas optimality of $v_t$ in \eqref{eq:lmo-gradient} gives
\[
    \ip{\nabla f(x_t)}
       {\Log_{x_t}v_t}_{x_t}
    \leq
    \ip{\nabla f(x_t)}
       {\Log_{x_t}x^\star}_{x_t}.
\]

\subsection{Smoothness, error bounds, and scaling}
\label{subsec:analytic-conditions}

We require smoothness only along the feasible geodesic segments used by the
algorithm. Specifically, we say that $f$ is geodesically $L$-smooth on
$\C$ if
\begin{equation}
\label{eq:geodesic-smoothness}
    f(\gamma_{xy}(\eta))
    \leq
    f(x)
    +
    \eta
    \ip{\nabla f(x)}{\Log_x y}_x
    +
    \frac{L}{2}\eta^2 d(x,y)^2
\end{equation}
for every relevant $x,y\in\C$ and $\eta\in[0,1]$. Applied with
$y=v_t$ to the RFW update \eqref{eq:rfw-update}, this gives the basic
descent estimate
\begin{equation}
\label{eq:master-descent-prelim}
    h_{t+1}
    \leq
    h_t
    -
    \eta_t g_t
    +
    \frac{L}{2}\eta_t^2 r_t^2,
\end{equation}
where
\[
    h_t:=f(x_t)-f^\star.
\]
The objective-side regularity condition used in the rate analysis is a
quadratic error bound. For a nonempty solution set $X^\star$, let
\[
    d(x,X^\star)
    :=
    \inf_{x^\star\in X^\star}d(x,x^\star).
\]
We say that $f$ satisfies a local quadratic error bound on a neighborhood
$U$ of $X^\star$ if there exists $\mu>0$ such that
\begin{equation}
\label{eq:qeb}
    d(x,X^\star)
    \leq
    \mu\bigl(f(x)-f^\star\bigr)^{1/2},
    \qquad
    x\in\C\cap U.
\end{equation}
Strong geodesic convexity provides one sufficient condition for an estimate
of the form \eqref{eq:qeb}, with an appropriate constant, but is not assumed
in the upper-complexity theorem. We retain the error bound as the primitive
objective-side hypothesis in order to separate objective geometry from the
geometry of the feasible set.

The second ingredient is a power-type oracle scaling inequality. Let
$p\geq2$ and $\beta>0$. We say that an oracle triple $(x,w,v)$ satisfies a
$(\beta,p)$ scaling inequality if
\begin{equation}
\label{eq:p-scaling-prelim}
    \ip{w}{\Log_x v}_x
    \geq
    \beta
    \norm{w}_x
    D^{1-p}d(x,v)^p.
\end{equation}
The normalization by $D^{1-p}$ makes $\beta$ invariant under a uniform
rescaling of the metric and is consistent with the power-type feasible-set
modulus introduced in Section~\ref{sec:power-geometry}.

Scaling conditions together with objective-side error bounds are known to
yield accelerated projection-free rates
\cite{scieur2026strongly,chen2026openloop}.
Here, however, \eqref{eq:p-scaling-prelim} is not imposed as an independent
assumption on the optimization problem. Sections~\ref{sec:power-geometry}
--\ref{sec:certificates} derive such a scaling inequality from the geometry
of $\C$ and the differential distortion of the logarithm map.

\subsection{LMO-restricted logarithmic distortion}
\label{subsec:lmo-distortion}

The exact transfer derived later depends only on logarithmic distortions
encountered along oracle-selected directions. To capture this distinction,
define the set of regular, nontrivial oracle configurations by
\begin{equation}
\label{eq:oracle-configurations}
    \E_{\C}
    :=
    \left\{
        (x,w,v):
        \begin{array}{l}
        x,v\in\C,\quad v\neq x,\quad
        0\neq w\in T_x\M,\\[1mm]
        v\in\operatorname{LMO}_{\C}(x,w),\quad
        (D\Log_x)_v\ \text{exists}
        \end{array}
    \right\}.
\end{equation}
The \emph{LMO-restricted logarithmic distortion} is
\begin{equation}
\label{eq:lambda-lmo}
    \Lambda_{\C}^{\LMO}
    :=
    \inf_{(x,w,v)\in\E_{\C}}
    \frac{
        \norm{(D\Log_x)_v^*w}_v
    }{
        \norm{w}_x
    }.
\end{equation}
This is a directional quantity: it probes only endpoint pairs and tangent
directions that can actually arise from the oracle. It may therefore be
strictly more informative than a uniform lower singular-value bound.

For comparison, whenever $(D\Log_x)_v$ exists for every pair in the
corresponding domain, define
\begin{equation}
\label{eq:lambda-global}
    \Lambda_{\C}
    :=
    \inf_{\substack{x,v\in\C\\x\neq v}}
    \sigma_{\min}\!\left((D\Log_x)_v\right).
\end{equation}
For every $(x,w,v)\in\E_{\C}$,
\[
    \norm{(D\Log_x)_v^*w}_v
    \geq
    \sigma_{\min}\!\left((D\Log_x)_v\right)
    \norm{w}_x.
\]
Consequently, whenever the global quantity
\eqref{eq:lambda-global} is defined on a domain containing all
oracle configurations,
\begin{equation}
\label{eq:lambda-order}
    \Lambda_{\C}^{\LMO}
    \geq
    \Lambda_{\C}.
\end{equation}
The distinction between
$\Lambda_{\C}^{\LMO}$ and $\Lambda_{\C}$ will be useful below.
The exact transfer theorem naturally produces the former, whereas curvature
comparison gives convenient lower bounds on the latter. In structured
geometries, such as the affine-invariant positive-definite cone, spectral
information can provide sharper certificates by resolving the distortion
direction by direction.

\section{Power-type curvature of Riemannian feasible sets}
\label{sec:power-geometry}

We now introduce the geometric property of the feasible set that will
generate the Frank--Wolfe scaling inequality. The construction is motivated
by power-type moduli for uniformly convex sets in Euclidean spaces
\cite{kerdreux2026local}, but is formulated intrinsically on the manifold:
the admissible perturbation is taken in the tangent space at an intermediate
point of the minimizing geodesic joining two feasible points. For quadratic
power, the resulting condition is closely related to the double-geodesic
strong-convexity notion of Scieur et al.~\cite{scieur2026strongly}. The
formulation below is chosen so that the feasible-set modulus and the ambient
Riemannian distortion remain separate.

\subsection{A double-geodesic modulus}
\label{subsec:dg-modulus}

Throughout this section, we assume that $\C$ is uniquely geodesically convex:
for every $x,y\in\C$, there is a unique minimizing geodesic
$\gamma_{xy}:[0,1]\to\M$ joining $x$ to $y$, and
$
  \displaystyle  \gamma_{xy}([0,1])\subseteq\C.
$
This assumption makes the global modulus below independent of a choice of
minimizing geodesic. The transfer argument in
Section~\ref{sec:jacobi-scaling} is local and requires substantially less:
only the relevant geodesic terminating at an oracle point and the associated
endpoint perturbations are needed. We use unique geodesic convexity here to
define a convenient global problem class.

\begin{definition}[Double-geodesic modulus]
\label{def:dg-modulus}
Let $\C\subset\M$ be uniquely geodesically convex and let
$D=\diam(\C)\in(0,\infty)$. A function
\[
    \Delta_{\C}:[0,D]\longrightarrow[0,\infty)
\]
is called an admissible \emph{double-geodesic modulus} for $\C$ if, for
every $x,y\in\C$ and $t\in[0,1]$, the tangent ball
\begin{equation}
\label{eq:modulus-tangent-ball}
    \left\{
        z\in T_{\gamma_{xy}(t)}\M:
        \norm{z}_{\gamma_{xy}(t)}
        \leq
        t(1-t)\Delta_{\C}(d(x,y))
    \right\}
\end{equation}
is contained in the domain of
$\Exp_{\gamma_{xy}(t)}$ and satisfies
\begin{equation}
\label{eq:modulus-feasible}
    \Exp_{\gamma_{xy}(t)}(z)\in\C
\end{equation}
for every $z$ in \eqref{eq:modulus-tangent-ball}.
\end{definition}
Equivalently, the geodesic ball generated by exponential perturbations of
radius
$
     \displaystyle   t(1-t)\Delta_{\C}(d(x,y))
$
around $\gamma_{xy}(t)$ is required to remain feasible. The factor
$t(1-t)$ is symmetric under interchange of the endpoints and forces the
admissible perturbation radius to vanish at $t=0$ and $t=1$.

The modulus $\Delta_{\C}(r)$ has the dimension of length. This observation is
important when passing from quadratic to general power-type geometry. In
particular, a condition of the form $\Delta_{\C}(r)\geq\alpha r^p$ assigns
a dimensional parameter to $\alpha$ whenever $p\neq1$. The diameter
normalization introduced below removes this dependence on the choice of
length scale.



\begin{remark}[Admissible moduli and localization]
\label{rem:modulus-admissibility-localization}
The modulus in Definition~\ref{def:dg-modulus} need not be unique. If
$\Delta_{\C}$ is admissible and
$0\leq\widetilde\Delta_{\C}\leq\Delta_{\C}$ pointwise, then
$\widetilde\Delta_{\C}$ is also admissible. Thus the analysis requires only
a specified admissible lower bound; it does not depend on the existence or
regularity of a maximal modulus obtained by taking the pointwise supremum
over admissible radii.

Definition~\ref{def:dg-modulus} is stated globally over pairs of points in
$\C$, whereas the Jacobi scaling argument is local to an oracle
configuration $(x,w,v)$. It uses only feasible perturbations along the
geodesic from $x$ to $v$ and for parameters approaching the endpoint
$t=1$. Accordingly, local, exposed, or oracle-restricted admissible moduli
yield corresponding local scaling statements. We use the global
formulation because it provides a convenient uniform condition for the
complexity results.
\end{remark}

\subsection{Power-type double-geodesic uniform convexity}
\label{subsec:dguc}

\begin{definition}[Power-type double-geodesic uniform convexity]
\label{def:dguc}
Let $p\geq2$ and $\alpha>0$. A uniquely geodesically convex set
$\C\subset\M$ with diameter $D\in(0,\infty)$ is
\emph{$(\alpha,p)$ double-geodesically uniformly convex}, written
\[
    \C\in\DGUC_p(\alpha),
\]
if it admits a double-geodesic modulus $\Delta_{\C}$ satisfying
\begin{equation}
\label{eq:power-modulus}
    \Delta_{\C}(r)
    \geq
    \alpha D^{1-p}r^p,
    \qquad
    0\leq r\leq D.
\end{equation}
Equivalently, for every $x,y\in\C$ and $t\in[0,1]$, every tangent vector
$z\in T_{\gamma_{xy}(t)}\M$ satisfying
\begin{equation}
\label{eq:dguc-radius}
    \norm{z}_{\gamma_{xy}(t)}
    \leq
    \alpha t(1-t)D^{1-p}d(x,y)^p
\end{equation}
belongs to the domain of $\Exp_{\gamma_{xy}(t)}$ and satisfies
$
    \displaystyle    \Exp_{\gamma_{xy}(t)}(z)\in\C.
$
\end{definition}

The factor $D^{1-p}$ makes $\alpha$ dimensionless and is essential for
metric-scale invariance. Indeed, under a constant rescaling
\[
    \widetilde g=c^2g,
    \qquad c>0,
\]
distances, diameters, and tangent-vector norms are all multiplied by $c$,
whereas
\[
    \widetilde D^{\,1-p}\widetilde d(x,y)^p
    =
    (cD)^{1-p}\bigl(c\,d(x,y)\bigr)^p
    =
    cD^{1-p}d(x,y)^p.
\]
Thus, both sides of \eqref{eq:dguc-radius} transform with the same length
scale.

\begin{proposition}[Metric-scale invariance]
\label{prop:scale-invariance}
Let $\widetilde g=c^2g$ for some $c>0$. Then,
\[
    \C\in\DGUC_p(\alpha)
    \quad\text{with respect to }g,
\]
if and only if,
\[
    \C\in\DGUC_p(\alpha)
    \quad\text{with respect to }\widetilde g.
\]
In particular, the parameter $\alpha$ is invariant under constant rescaling
of the Riemannian metric.
\end{proposition}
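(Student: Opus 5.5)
The plan is to check directly that every ingredient of Definition~\ref{def:dguc} transforms compatibly under $\widetilde g=c^2g$, and then read off both implications at once. Since $c^{-2}\widetilde g=g$, it suffices to prove one implication: if $\C\in\DGUC_p(\alpha)$ with respect to $g$, then the same holds with respect to $\widetilde g$. The converse then follows by applying this implication to the pair $(\widetilde g,g)$ with constant $c^{-1}$.

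The first step records the invariant objects. The Levi-Civita connection is unchanged under constant rescaling, because the Koszul formula is homogeneous of degree one in the metric. Hence the geodesics, as parametrized curves, are the same for $g$ and $\widetilde g$. Lengths scale by $c$, so $\widetilde d=c\,d$ and $\widetilde D=cD$. Minimizing geodesics are therefore the same curves, and unique geodesic convexity of $\C$ is preserved with the same $\gamma_{xy}$. The exponential maps coincide as maps on each tangent space, $\widetilde\Exp_q=\Exp_q$, with the same domain, and tangent norms satisfy $\norm{z}^{\sim}_q=c\norm{z}_q$.

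The second step transfers the feasibility condition. Fix $x,y\in\C$, $t\in[0,1]$, and $q=\gamma_{xy}(t)$. Take $z\in T_q\M$ with
\[
\norm{z}^{\sim}_q\le \alpha t(1-t)\widetilde D^{\,1-p}\widetilde d(x,y)^p .
\]
By the scaling computation displayed before the proposition, the right-hand side equals $c\,\alpha t(1-t)D^{1-p}d(x,y)^p$. Dividing by $c$ gives $\norm{z}_q\le\alpha t(1-t)D^{1-p}d(x,y)^p$. The $g$-hypothesis then says that $z$ lies in the domain of $\Exp_q$ and $\Exp_q(z)\in\C$. Since $\widetilde\Exp_q=\Exp_q$ with the same domain, the $\widetilde g$-condition holds with the same $\alpha$. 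Equivalently, at the level of moduli, $\widetilde\Delta_{\C}(s):=c\,\Delta_{\C}(s/c)$ is an admissible $\widetilde g$-modulus and satisfies $\widetilde\Delta_{\C}(s)\ge\alpha\widetilde D^{1-p}s^p$.

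The only point needing care is the claim that the connection, and hence $\Exp$ and the geodesic parametrization $\gamma_{xy}(t)$, is unchanged. In particular, the parameter $t$ must mean the same point for both metrics. This holds because $\gamma_{xy}$ is defined through $\Exp_x(t\Log_x y)$, and $\widetilde\Log_x=\Log_x$. Everything else is the routine homogeneity bookkeeping already carried out in the text.
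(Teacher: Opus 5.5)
Your proposal is correct and follows essentially the same route as the paper's proof. Both arguments rest on the same two facts. First, a constant rescaling leaves the Levi-Civita connection unchanged, so the geodesics $\gamma_{xy}$ and the exponential maps are the same for both metrics. Second, distances, the diameter and tangent norms all scale by $c$, so both sides of the admissibility inequality are multiplied by $c$. Your symmetry reduction and the modulus-level reformulation $\widetilde\Delta_{\C}(s)=c\,\Delta_{\C}(s/c)$ are harmless additions.
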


\begin{proof}
A constant rescaling of the metric leaves the Levi--Civita connection
unchanged. Hence, the parametrized geodesics and exponential maps are the same
as maps on $\M$. On the other hand,
\[
    \widetilde d(x,y)=c\,d(x,y),
    \qquad
    \widetilde D=cD,
    \qquad
    \norm{z}_{\widetilde g}=c\norm{z}_g.
\]
Consequently,
\[
    \widetilde D^{\,1-p}\widetilde d(x,y)^p
    =
    cD^{1-p}d(x,y)^p.
\]
The admissibility inequality \eqref{eq:dguc-radius} is therefore multiplied
by the same factor $c$ on both sides, while its exponential image is
unchanged. The two $\DGUC_p(\alpha)$ conditions are equivalent.
\end{proof}

\subsection{Quadratic case and relation to strong feasible-set geometry}
\label{subsec:p2-relation}

For $p=2$, the radius in Definition~\ref{def:dguc} becomes
\begin{equation}
\label{eq:dguc-p2}
    \alpha t(1-t)D^{-1}d(x,y)^2.
\end{equation}
Therefore, the quadratic case has the same characteristic
$t(1-t)d(x,y)^2$ dependence as double-geodesic strong convexity
\cite{scieur2026strongly}, with the additional diameter normalization that
makes the parameter dimensionless.

The quadratic case $p=2$ recovers the same structural mechanism underlying
the strong-set conditions of \cite{scieur2026strongly}: feasible geodesics
admit transverse perturbations whose guaranteed radius is quadratic in the
separation of their endpoints. Definition~\ref{def:dguc} extends this
mechanism to a power-type modulus. The corresponding constants depend on
the normalization adopted for the feasible-set condition, while the
quadratic geometric structure is preserved.

For $p>2$, the exponent quantifies how the guaranteed transverse feasible
radius scales as two feasible points approach one another. The transfer
theorem in Section~\ref{sec:jacobi-scaling} preserves this power exactly.
The feasible-set geometry contributes the factor
$D^{1-p}d(x,v)^p$, while the ambient manifold enters separately through
the differential factor $(D\Log_x)_v^*$. As a result, the power governing the
Frank--Wolfe scaling law is inherited directly from the intrinsic
feasible-set modulus, whereas ambient Riemannian geometry determines its
distortion. This separation motivates formulating the feasible-set
condition independently of ambient sectional curvature.

\subsection{Monotonicity of the power hierarchy}
\label{subsec:power-monotonicity}

On a set of finite diameter, the normalization in
Definition~\ref{def:dguc} induces a simple monotonicity with respect to the
power.

\begin{proposition}[Power monotonicity]
\label{prop:power-monotonicity}
Suppose that
\[
    \C\in\DGUC_p(\alpha)
\]
for some $p\geq2$. Then, for every $q\geq p$,
\[
    \C\in\DGUC_q(\alpha).
\]
\end{proposition}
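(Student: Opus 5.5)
The plan is to show that the power-$q$ radius required by $\DGUC_q(\alpha)$ is never larger than the power-$p$ radius already guaranteed by $\DGUC_p(\alpha)$. The containment of tangent balls then transfers feasibility directly. We use the existing modulus $\Delta_{\C}$ for $\C\in\DGUC_p(\alpha)$ and appeal to Remark~\ref{rem:modulus-admissibility-localization}, which says that any pointwise smaller nonnegative function is again an admissible double-geodesic modulus. It therefore suffices to show that
\[
    \widetilde\Delta_{\C}(r):=\alpha D^{1-q}r^q
\]
satisfies $\widetilde\Delta_{\C}\le\Delta_{\C}$ on $[0,D]$.

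The key computation is the elementary inequality
\[
    \alpha D^{1-q}r^q
    =
    \alpha D^{1-p}r^p\Bigl(\frac{r}{D}\Bigr)^{q-p}
    \le
    \alpha D^{1-p}r^p
    \le
    \Delta_{\C}(r),
    \qquad 0\le r\le D .
\]
The first inequality uses $0\le r/D\le 1$ together with $q-p\ge0$, so that $(r/D)^{q-p}\le1$. The case $r=0$ is trivial, with the convention $0^0=1$ when $q=p$. The second inequality is \eqref{eq:power-modulus}. Every relevant $r=d(x,y)$ with $x,y\in\C$ indeed lies in $[0,D]$, because $D=\diam(\C)$.

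Equivalently, in the pointwise form of Definition~\ref{def:dguc}, fix $x,y\in\C$ and $t\in[0,1]$. Any $z\in T_{\gamma_{xy}(t)}\M$ satisfying the $q$-radius bound $\norm{z}\le\alpha t(1-t)D^{1-q}d(x,y)^q$ also satisfies the $p$-radius bound \eqref{eq:dguc-radius}. Hence $z$ lies in the domain of $\Exp_{\gamma_{xy}(t)}$ and $\Exp_{\gamma_{xy}(t)}(z)\in\C$. Unique geodesic convexity and the diameter are unchanged, so the remaining requirements of Definition~\ref{def:dguc} hold automatically.

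There is no real obstacle here. The only point needing care is the bound $d(x,y)\le D$, which is exactly where the diameter normalization enters. Without it the inequality $r^{q-p}\le D^{q-p}$ would fail, and the monotonicity would reverse for large separations.
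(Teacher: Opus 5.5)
Your proposal is correct and matches the paper's proof: the factorization $D^{1-q}r^q = D^{1-p}r^p\,(r/D)^{q-p}$ with $0\le r\le D$ shows that the $\DGUC_q(\alpha)$ perturbation radius is dominated by the $\DGUC_p(\alpha)$ radius, so every required perturbation lies in the already-guaranteed feasible region. Your appeal to the admissible-modulus remark is the same containment stated at the level of moduli rather than tangent balls.
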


\begin{proof}
For every $r\in[0,D]$ and $q\geq p$,
\[
    D^{1-q}r^q
    =
    D^{1-p}r^p
    \left(\frac{r}{D}\right)^{q-p}
    \leq
    D^{1-p}r^p.
\]
Hence,
\[
    \alpha D^{1-q}r^q
    \leq
    \alpha D^{1-p}r^p.
\]
Every perturbation required by $\DGUC_q(\alpha)$ is therefore contained in
the perturbation region guaranteed by $\DGUC_p(\alpha)$.
\end{proof}

\begin{remark}[Effective power]
\label{rem:meaningful-exponent}
By Proposition~\ref{prop:power-monotonicity}, if a feasible set belongs to
$\DGUC_p(\alpha)$, then it also satisfies a power-type condition for every
$q\geq p$, with the corresponding constant. Thus the exponent is most
informative through the small-scale behavior of the modulus
$\Delta_{\C}(r)$ as $r\downarrow0$. When
\[
    \Delta_{\C}(r)\asymp D^{1-p}r^p
    \qquad\text{as } r\downarrow0,
\]
the smallest admissible local exponent $p$ captures the strongest
power-type geometry of the feasible set. This is the effective power used
in the complexity results below; in oracle-restricted statements, the same
interpretation applies to the portion of the feasible set exposed by the
linear minimization oracle.
\end{remark}

\subsection{Separation of feasible-set and ambient geometry}
\label{subsec:dguc-assumptions}

Definition~\ref{def:dguc} isolates the intrinsic geometry of the feasible
set through a purely metric--geometric inclusion property. It applies
without boundary smoothness: the admissible transverse perturbations are
specified directly through the exponential map, independently of a
differentiable boundary or a second fundamental form.

The definition is likewise independent of ambient sectional curvature.
Curvature bounds enter the analysis at a later stage, where they provide
quantitative control of the differential of the logarithm map. This
separation allows the intrinsic curvature of the feasible region and the
ambient Riemannian distortion to be analyzed as distinct geometric
quantities.

The local geometry-to-scaling transfer also extends beyond compact feasible
sets. Finite diameter provides the normalization of the power-type modulus,
while compactness is invoked separately when global properties are needed,
such as existence of the linear minimization oracle or uniform constants in
the algorithmic and sharpness results.

This formulation reveals the two geometric quantities governing the
Frank--Wolfe margin. The feasible-set modulus
$\Delta_{\C}(d(x,v))$ measures the strength of the admissible perturbation
at the oracle endpoint, while
$\norm{(D\Log_x)_v^*w}_v/\norm{w}_x$ measures its differential distortion
when viewed from the oracle base point. The Jacobi scaling theorem in the
next section shows that their product determines the resulting
Frank--Wolfe scaling margin.

\section{From feasible-set curvature to Frank--Wolfe scaling}
\label{sec:jacobi-scaling}

We now establish the geometric transfer principle underlying the subsequent
complexity analysis. The result is stated first for a general
double-geodesic modulus and then specialized to the power-type condition of
Definition~\ref{def:dguc}. In this form, the transfer is independent of a
particular power law: the feasible-set modulus enters without approximation,
while the ambient Riemannian geometry appears through the differential of
the logarithm map.

\subsection{Endpoint first variation}
\label{subsec:endpoint-variation}

The argument is based on a feasible perturbation whose base point approaches
the endpoint of the oracle geodesic. We first isolate the corresponding
first-variation identity.

\begin{lemma}[Endpoint perturbation]
\label{lem:endpoint-perturbation}
Let $x,v\in\M$ be joined by a minimizing geodesic
\[
    \gamma(t)=\Exp_x(t\xi),
    \qquad
    \xi=\Log_x v,
    \qquad
    t\in[0,1].
\]
Let $Z$ be a smooth vector field along $\gamma$, and let $a\geq0$. For
sufficiently small $\varepsilon\geq0$, suppose that
\begin{equation}
\label{eq:q-epsilon}
    q_\varepsilon
    :=
    \Exp_{\gamma(1-\varepsilon)}
    \!\left(
        \varepsilon(1-\varepsilon)aZ(1-\varepsilon)
    \right)
\end{equation}
is defined. Then, $q_0=v$ and
\begin{equation}
\label{eq:q-derivative}
    \left.
    \frac{d}{d\varepsilon}q_\varepsilon
    \right|_{\varepsilon=0^+}
    =
    -\dot\gamma(1)+aZ(1).
\end{equation}
\end{lemma}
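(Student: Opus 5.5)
The plan is to view $q_\varepsilon$ as the value $F(\varepsilon,s_\varepsilon)$ of a smooth two-variable map and to differentiate via the chain rule. Define
\[
    \Phi(\tau,u):=\Exp_{\gamma(\tau)}\bigl(u\,Z(\tau)\bigr),
\]
for $\tau$ near $1$ and $u$ near $0$. This map is smooth on a neighborhood of $(1,0)$. The reason is that $(\tau,u)\mapsto u Z(\tau)$ is a smooth curve-family in $T\M$ through the zero section, and the exponential map is smooth on an open neighborhood of the zero section. Then
\[
    q_\varepsilon=\Phi\bigl(1-\varepsilon,\ \varepsilon(1-\varepsilon)a\bigr),
\]
and at $\varepsilon=0$ we get $q_0=\Phi(1,0)=\Exp_{\gamma(1)}(0)=\gamma(1)=v$.

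Next, I will compute the two partial derivatives of $\Phi$ at $(1,0)$.

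\emph{Derivative in $\tau$.} Since $\Phi(\tau,0)=\Exp_{\gamma(\tau)}(0)=\gamma(\tau)$, we have $\partial_\tau\Phi(1,0)=\dot\gamma(1)$.

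\emph{Derivative in $u$.} For fixed $\tau=1$, the curve $u\mapsto\Exp_v(uZ(1))$ is the geodesic with initial velocity $Z(1)$, so $\partial_u\Phi(1,0)=Z(1)$. This is the standard fact $(D\Exp_v)_0=\mathrm{id}$.

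Finally, the inner map $\varepsilon\mapsto(1-\varepsilon,\varepsilon(1-\varepsilon)a)$ has derivative $(-1,a)$ at $\varepsilon=0$. The chain rule then gives
\[
    \left.\tfrac{d}{d\varepsilon}q_\varepsilon\right|_{\varepsilon=0^+}=-\dot\gamma(1)+aZ(1),
\]
which is \eqref{eq:q-derivative}. Because $\Phi$ and the inner map are smooth, the one-sided derivative at $0^+$ coincides with the restriction of the two-sided derivative of the smooth extension. This still holds if $q_\varepsilon$ is only assumed defined for $\varepsilon\ge0$.

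The main point requiring care is the smoothness of $\Phi$ near $(1,0)$. In particular, $\gamma$ and $Z$ must be defined slightly beyond $\tau=1$, or else we must work with one-sided derivatives. I would handle this by noting that the geodesic $\gamma$ extends to a short open interval around $1$, since geodesics are locally extendable. The field $Z$ is smooth on $[0,1]$, so it can be extended smoothly past $1$, for example by parallel transport, or by Seeley extension in a local trivialization. Alternatively, we can simply take one-sided partial derivatives in $\tau$, which suffices because the argument only uses $\tau\le1$.

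Smoothness of $\Exp$ near the zero section is standard: it follows from smooth dependence of ODE solutions on initial data. The other possible subtlety is the identification of $\partial_u\Phi$, and that is exactly $(D\Exp_v)_0=\mathrm{id}$. I would therefore write the proof as the chain-rule computation above, preceded by a short justification of the local smoothness of $\Phi$.
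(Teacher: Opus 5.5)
Your argument is correct and is essentially the paper's proof: both differentiate $\Exp$ along a curve in $T\M$ through the zero-section point $(v,0)$, using only $\Exp_y(0)=y$ and $(D\Exp_v)_0=\mathrm{id}$. Factoring through $\Phi(\tau,u)=\Exp_{\gamma(\tau)}(uZ(\tau))$ is a slightly cleaner way to organize the same computation, since the $\varepsilon$-dependence of $Z(1-\varepsilon)$ is absorbed automatically into $\partial_\tau\Phi(1,0)=\dot\gamma(1)$; the paper handles it by noting that such derivatives are multiplied by the vanishing factor $\varepsilon(1-\varepsilon)$.
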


\begin{proof}
Let
\[
    \mathscr E:\mathcal D\subset T\M\longrightarrow\M,
    \qquad
    \mathscr E(y,u):=\Exp_y(u),
\]
where $\mathcal D$ denotes the domain of the exponential map. Consider the
curve in the tangent bundle
\[
    \Gamma(\varepsilon)
    :=
    \left(
        \gamma(1-\varepsilon),
        \varepsilon(1-\varepsilon)aZ(1-\varepsilon)
    \right).
\]
Then, $q_\varepsilon=\mathscr E(\Gamma(\varepsilon))$ and
$
  \displaystyle  \Gamma(0)=(v,0).
$
At the zero section, the differential of $\mathscr E$ identifies the
horizontal variation of the base point and the vertical variation of the
fiber with their sum in $T_v\M$. The base-point component of
$\dot\Gamma(0)$ is $-\dot\gamma(1)$. Since the fiber component vanishes at
$\varepsilon=0$, its first-order vertical variation is $aZ(1)$; derivatives
of $Z(1-\varepsilon)$ are multiplied by the vanishing factor
$\varepsilon(1-\varepsilon)$ and do not contribute at first order.
Consequently,
\[
    \dot q_0
    =
    -\dot\gamma(1)+aZ(1),
\]
which proves \eqref{eq:q-derivative}.
\end{proof}

\begin{remark}[Independence of the extension]
\label{rem:no-transport-derivative}
Only the terminal value $Z(1)$ enters
\eqref{eq:q-derivative}. Hence, the first variation is independent of the
particular smooth extension of a prescribed vector
$z\in T_v\M$ along $\gamma$. In the proof below we may, for example, use
parallel transport of $z$ along the oracle geodesic.
\end{remark}

\subsection{Exact transfer of the double-geodesic modulus}
\label{subsec:exact-transfer}

We now state the main geometric result. Its assumptions are local at the
selected oracle configuration apart from the use of the global modulus from
Definition~\ref{def:dg-modulus}. In particular, no sectional-curvature bound
and no regularity assumption on the boundary of $\C$ are required.

\begin{theorem}[Exact Jacobi scaling transfer]
\label{thm:exact-jacobi-transfer}
Let $\C\subset\M$ be uniquely geodesically convex, and let
$\Delta_{\C}$ be an admissible double-geodesic modulus in the sense of
Definition~\ref{def:dg-modulus}. Fix $x\in\C$ and
$0\neq w\in T_x\M$, and let
\begin{equation}
\label{eq:thm-lmo}
    v\in
    \argmax_{z\in\C}
    \ip{w}{\Log_x z}_x.
\end{equation}
Assume that $\Log_x$ is single valued along the relevant oracle geodesic and
differentiable at $v$. Then, with
\[
    r:=d(x,v),
\]
one has
\begin{equation}
\label{eq:exact-modulus-transfer}
    \ip{w}{\Log_x v}_x
    \geq
    \Delta_{\C}(r)
    \norm{(D\Log_x)_v^*w}_v .
\end{equation}
In particular, if $\C\in\DGUC_p(\alpha)$, then
\begin{equation}
\label{eq:exact-power-transfer}
    \ip{w}{\Log_x v}_x
    \geq
    \alpha D^{1-p}d(x,v)^p
    \norm{(D\Log_x)_v^*w}_v .
\end{equation}
\end{theorem}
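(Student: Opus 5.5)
The plan is to perturb the oracle endpoint $v$ feasibly in a well-chosen transverse direction, and then use the first-order optimality of $v$ for the oracle objective together with Lemma~\ref{lem:endpoint-perturbation}. Set $\xi=\Log_x v$ and $\gamma(t)=\Exp_x(t\xi)$, and write $\phi(z):=\ip{w}{\Log_x z}_x$. By the single-valuedness and differentiability hypotheses, $\phi$ is differentiable at $v$ with
\[
    d\phi_v(u)=\ip{w}{(D\Log_x)_v u}_x=\ip{(D\Log_x)_v^*w}{u}_v .
\]
If $(D\Log_x)_v^*w=0$ or $\Delta_{\C}(r)=0$, the inequality reduces to $\ip{w}{\xi}_x\ge 0$. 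This follows because $x\in\C$ is a competitor in \eqref{eq:thm-lmo} and $\Log_x x=0$. So I will assume $u^*:=(D\Log_x)_v^*w\neq0$ and set $z:=-u^*/\norm{u^*}_v\in T_v\M$, a unit vector. I extend $z$ along $\gamma$ by parallel transport to a field $Z$ with $\norm{Z(t)}=1$; Remark~\ref{rem:no-transport-derivative} says the extension does not matter.

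Next, take $a:=\Delta_{\C}(r)$ and form $q_\varepsilon$ as in \eqref{eq:q-epsilon}. With $t=1-\varepsilon$ on the geodesic $\gamma=\gamma_{xv}$, the perturbation $\varepsilon(1-\varepsilon)aZ(1-\varepsilon)$ has norm exactly $t(1-t)\Delta_{\C}(d(x,v))$. This lies in the tangent ball \eqref{eq:modulus-tangent-ball}, so Definition~\ref{def:dg-modulus} gives $q_\varepsilon\in\C$ for all $\varepsilon\in[0,1]$. Here I use unique geodesic convexity so that $\gamma$ is the geodesic $\gamma_{xv}$ appearing in the definition. Since $v$ maximizes $\phi$ over $\C$, we have $\phi(q_\varepsilon)\le\phi(v)$ for small $\varepsilon>0$. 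Dividing by $\varepsilon$ and letting $\varepsilon\downarrow0$, Lemma~\ref{lem:endpoint-perturbation} and the chain rule give
\[
    0\ \ge\ d\phi_v\bigl(-\dot\gamma(1)+aZ(1)\bigr)
    = -\ip{w}{(D\Log_x)_v\dot\gamma(1)}_x + a\ip{u^*}{z}_v .
\]

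Finally, the radial identity \eqref{eq:radial-log-identity} gives $(D\Log_x)_v\dot\gamma(1)=\xi$, and the choice of $z$ gives $\ip{u^*}{z}_v=-\norm{u^*}_v$. The inequality therefore rearranges to $\ip{w}{\xi}_x\ge -a\ip{u^*}{z}_v$, that is, $\ip{w}{\Log_x v}_x\ge\Delta_{\C}(r)\norm{(D\Log_x)_v^*w}_v$, which is \eqref{eq:exact-modulus-transfer}. The power form \eqref{eq:exact-power-transfer} then follows from \eqref{eq:power-modulus}, because the right-hand side is monotone in the modulus. (If a different direction had been chosen, its sign would have to be fixed; taking the negative of the gradient direction is exactly what produces the norm.)

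I expect the main obstacle to be justifying the one-sided differentiation. It requires $\varepsilon\mapsto\phi(q_\varepsilon)$ to be right-differentiable at $0$, with derivative given by the chain rule through $d\phi_v$. For this I will use that $q_\varepsilon$ is a $C^1$ curve at $\varepsilon=0^+$, which Lemma~\ref{lem:endpoint-perturbation} provides, and that $\Log_x$ is differentiable at $v$. Differentiability at a single point is enough for the chain rule along a curve that is differentiable at that point. I also need $q_\varepsilon$ to stay in the region where $\Log_x$ is single-valued and coincides with the branch used in \eqref{eq:thm-lmo}; this is part of the local regularity assumption on the oracle geodesic.
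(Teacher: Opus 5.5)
Your strategy is the paper's: perturb the oracle endpoint by $q_\varepsilon$, apply Lemma~\ref{lem:endpoint-perturbation}, then use the radial identity and the adjoint. However, the decisive step contains a sign error. From
\[
    0\ \ge\ -\ip{w}{\xi}_x + a\ip{u^*}{z}_v
\]
the correct rearrangement is $\ip{w}{\xi}_x \ge a\ip{u^*}{z}_v$, not $\ip{w}{\xi}_x\ge -a\ip{u^*}{z}_v$. With your choice $z=-u^*/\norm{u^*}_v$, the correct rearrangement gives only $\ip{w}{\xi}_x\ge -\Delta_{\C}(r)\norm{u^*}_v$. That is vacuous, since you already know $\ip{w}{\xi}_x\ge 0$. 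You reach the stated inequality only because the wrong choice of $z$ and the wrong rearrangement cancel each other. Your parenthetical claim that the negative gradient direction is what produces the norm is backwards.

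Optimality of $v$ says that no feasible curve leaving $v$ can increase $\phi$ to first order. The curve $q_\varepsilon$ loses $\ip{w}{\xi}_x$ (per unit $\varepsilon$) by retreating along $\gamma$, and gains $a\ip{u^*}{z}_v$ from the transverse push. You get information only when the push is an ascent direction for $\phi$. So take $z=+u^*/\norm{u^*}_v$, the gradient direction of $\phi$ at $v$. This gives $0\ge-\ip{w}{\xi}_x+a\norm{u^*}_v$, which is \eqref{eq:exact-modulus-transfer}. Pushing along $-u^*$ makes both contributions nonpositive, so the optimality inequality holds automatically and says nothing. The paper avoids this sign bookkeeping by proving $\ip{w}{\xi}_x\ge a\ip{(D\Log_x)_v^*w}{z}_v$ for an arbitrary unit $z$ and then taking the supremum over the unit sphere.

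With this correction, the rest of your argument is sound and matches the paper: feasibility of $q_\varepsilon$ via the modulus at $t=1-\varepsilon$, the one-sided chain rule at a point of differentiability, the radial identity, and the degenerate cases $u^*=0$ or $\Delta_{\C}(r)=0$.
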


\begin{proof}
If $v=x$, then $r=0$ and the right-hand side of
\eqref{eq:exact-modulus-transfer} vanishes, so the claim is immediate.
Assume henceforth that $v\neq x$, and set
\[
    \xi:=\Log_x v,
    \qquad
    r=\norm{\xi}_x=d(x,v),
    \qquad
    \gamma(t):=\Exp_x(t\xi).
\]
Let
\[
    a:=\Delta_{\C}(r).
\]
Fix an arbitrary unit vector $z\in T_v\M$. Choose a smooth unit vector field
$Z$ along $\gamma$ such that $Z(1)=z$; parallel transport of $z$ along
$\gamma$ provides one such choice. For $\varepsilon>0$ sufficiently small,
define
\[
    q_\varepsilon
    :=
    \Exp_{\gamma(1-\varepsilon)}
    \!\left(
        \varepsilon(1-\varepsilon)aZ(1-\varepsilon)
    \right).
\]
Since $t=1-\varepsilon$ and
\[
    \left\|
        \varepsilon(1-\varepsilon)aZ(1-\varepsilon)
    \right\|_{\gamma(1-\varepsilon)}
    =
    \varepsilon(1-\varepsilon)\Delta_{\C}(r),
\]
Definition~\ref{def:dg-modulus}, applied to the pair $(x,v)$, gives
$
  \displaystyle  q_\varepsilon\in\C.
$
Define, on a neighborhood of $v$ on which $\Log_x$ is defined,
\[
    F_x(y):=\ip{w}{\Log_x y}_x.
\]
By the oracle property \eqref{eq:thm-lmo},
\[
    F_x(q_\varepsilon)
    \leq
    F_x(v)
    =
    F_x(q_0).
\]
Hence, the feasible one-sided derivative satisfies
\begin{equation}
\label{eq:one-sided-optimality}
    \left.
    \frac{d}{d\varepsilon}
    F_x(q_\varepsilon)
    \right|_{\varepsilon=0^+}
    \leq0.
\end{equation}
Using Lemma~\ref{lem:endpoint-perturbation},
\[
    \dot q_0
    =
    -\dot\gamma(1)+az.
\]
The chain rule and the definition of the metric adjoint therefore give
\begin{equation}
\label{eq:adjoint-first-variation}
    \ip{
        (D\Log_x)_v^*w
    }{
        -\dot\gamma(1)+az
    }_v
    \leq0.
\end{equation}
The radial identity \eqref{eq:radial-log-identity} yields
\[
    (D\Log_x)_v\dot\gamma(1)
    =
    \xi,
\]
and hence,
\begin{equation}
\label{eq:radial-adjoint-identity}
    \ip{(D\Log_x)_v^*w}{\dot\gamma(1)}_v
    =
    \ip{w}{\xi}_x.
\end{equation}
Substituting \eqref{eq:radial-adjoint-identity} into
\eqref{eq:adjoint-first-variation} gives
\begin{equation}
\label{eq:directional-transfer}
    \ip{w}{\xi}_x
    \geq
    a
    \ip{(D\Log_x)_v^*w}{z}_v.
\end{equation}
Since \eqref{eq:directional-transfer} holds for every unit
$z\in T_v\M$, taking the supremum over the unit sphere gives
\[
    \ip{w}{\xi}_x
    \geq
    a
    \norm{(D\Log_x)_v^*w}_v.
\]
Recalling that $a=\Delta_{\C}(r)$ and
$\xi=\Log_xv$ proves \eqref{eq:exact-modulus-transfer}.

If $\C\in\DGUC_p(\alpha)$, then
\[
    \Delta_{\C}(r)
    \geq
    \alpha D^{1-p}r^p,
\]
and \eqref{eq:exact-power-transfer} follows immediately.
\end{proof}

\begin{remark}[Local and oracle-wise transfer]
\label{rem:local-oracle-transfer}
The exact transfer is local both geometrically and with respect to the
linear minimization oracle. Its proof uses the feasible-set modulus only
along the selected oracle geodesic $\gamma_{xv}$ and for parameters
$t\uparrow1$. Hence, Theorem~\ref{thm:exact-jacobi-transfer} applies
whenever a local, exposed, or oracle-restricted modulus supplies the
corresponding endpoint perturbations. The global
$\DGUC_p(\alpha)$ condition subsequently provides a uniform certificate
over the problem class.

The same pointwise character applies when the linear minimization oracle
has multiple solutions. For every maximizer $v$ satisfying
\eqref{eq:thm-lmo} at which the logarithm map has the stated local
regularity, the transfer inequality holds with the corresponding
$\Log_x v$ and $(D\Log_x)_v^*$. Thus, different oracle solutions may carry
different directional distortion factors, a distinction captured
naturally by the LMO-restricted coefficients introduced below. The
normalized coefficients are stated for $w\neq0$, while the unnormalized
transfer inequality extends directly to the zero direction.
\end{remark}

\subsection{Scalar scaling and LMO-restricted distortion}
\label{subsec:scalarization}

The exact transfer
\eqref{eq:exact-modulus-transfer} retains the direction-dependent
differential factor
\[
    \norm{(D\Log_x)_v^*w}_v.
\]
A scalar Frank--Wolfe scaling inequality is obtained by lower-bounding this
factor relative to $\norm{w}_x$.

\begin{corollary}[LMO-restricted power scaling]
\label{cor:lmo-power-scaling}
Suppose that $\C\in\DGUC_p(\alpha)$ and that
$\Lambda_{\C}^{\LMO}$ is defined by \eqref{eq:lambda-lmo}. Then, every
$(x,w,v)\in\E_{\C}$ satisfies
\begin{equation}
\label{eq:lmo-scalar-scaling}
    \ip{w}{\Log_x v}_x
    \geq
    \alpha\Lambda_{\C}^{\LMO}
    \norm{w}_x
    D^{1-p}d(x,v)^p.
\end{equation}
Consequently, if
\[
    \Lambda_{\C}^{\LMO}>0,
\]
then the oracle configurations satisfy the $(\beta,p)$ scaling inequality
of \eqref{eq:p-scaling-prelim} with
\begin{equation}
\label{eq:beta-alpha-lambda}
    \beta
    =
    \alpha\Lambda_{\C}^{\LMO}.
\end{equation}
\end{corollary}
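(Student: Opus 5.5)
The plan is to combine the power form \eqref{eq:exact-power-transfer} of Theorem~\ref{thm:exact-jacobi-transfer} with the definition \eqref{eq:lambda-lmo} of $\Lambda_{\C}^{\LMO}$ as an infimum.

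Fix $(x,w,v)\in\E_{\C}$. By the definition \eqref{eq:oracle-configurations}, $x,v\in\C$, $w\neq0$, $v\in\LMO_{\C}(x,w)$, and $(D\Log_x)_v$ exists. Together with the standing local convention that $\Log_x$ is single valued along the oracle geodesic, these are exactly the hypotheses of Theorem~\ref{thm:exact-jacobi-transfer}. Since $\C\in\DGUC_p(\alpha)$, the theorem gives
\[
    \ip{w}{\Log_x v}_x\geq \alpha D^{1-p}d(x,v)^p\norm{(D\Log_x)_v^*w}_v .
\]

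Next we bound the distortion factor from below. Because $w\neq0$ and $(x,w,v)$ is one of the configurations over which the infimum \eqref{eq:lambda-lmo} is taken,
\[
    \norm{(D\Log_x)_v^*w}_v\geq \Lambda_{\C}^{\LMO}\norm{w}_x .
\]
The prefactor $\alpha D^{1-p}d(x,v)^p$ is nonnegative, so multiplying this bound by it and chaining with the previous display yields \eqref{eq:lmo-scalar-scaling}. The "consequently" part is then just a matter of naming: setting $\beta=\alpha\Lambda_{\C}^{\LMO}$, which is positive whenever $\Lambda_{\C}^{\LMO}>0$, turns \eqref{eq:lmo-scalar-scaling} into the $(\beta,p)$ scaling inequality \eqref{eq:p-scaling-prelim}.

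There are a few edge cases to check. Configurations with $v=x$ or $w=0$ are excluded from $\E_{\C}$, but the inequality holds trivially for them anyway, since both sides vanish. If $\E_{\C}$ is empty, the infimum equals $+\infty$ and the statement is vacuous. The main point requiring care is confirming that every element of $\E_{\C}$ meets the regularity hypotheses of the theorem; that follows from the preliminaries' convention that $\Log_x$ is invoked only in normal configurations.
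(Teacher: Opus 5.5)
Your proposal is correct and follows the same route as the paper: apply the power form \eqref{eq:exact-power-transfer} of Theorem~\ref{thm:exact-jacobi-transfer}, bound $\norm{(D\Log_x)_v^*w}_v\geq\Lambda_{\C}^{\LMO}\norm{w}_x$ directly from the infimum \eqref{eq:lambda-lmo}, and chain the two using the nonnegative prefactor $\alpha D^{1-p}d(x,v)^p$. Your remarks on the edge cases and on the regularity hypotheses are extra care that the paper's two-line proof leaves implicit.
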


\begin{proof}
For every $(x,w,v)\in\E_{\C}$,
Definition~\eqref{eq:lambda-lmo} gives
\[
    \norm{(D\Log_x)_v^*w}_v
    \geq
    \Lambda_{\C}^{\LMO}\norm{w}_x.
\]
Combining this inequality with
\eqref{eq:exact-power-transfer} yields
\eqref{eq:lmo-scalar-scaling}.
\end{proof}

The preceding corollary uses only distortions encountered by the oracle.
A more conservative but often easier certificate is obtained from a uniform
singular-value bound.

\begin{corollary}[Uniform singular-value certificate]
\label{cor:singular-value-scaling}
Suppose that $(D\Log_x)_v$ exists on a domain containing all relevant oracle
pairs and that
\[
    \Lambda_{\C}
    :=
    \inf_{\substack{x,v\in\C\\x\neq v}}
    \sigma_{\min}\!\left((D\Log_x)_v\right)
    >0,
\]
where the infimum is taken over pairs for which the logarithm differential
is defined. Then,
\[
    \Lambda_{\C}^{\LMO}
    \geq
    \Lambda_{\C},
\]
and every oracle configuration satisfies
\begin{equation}
\label{eq:global-scalar-scaling}
    \ip{w}{\Log_x v}_x
    \geq
    \alpha\Lambda_{\C}
    \norm{w}_x
    D^{1-p}d(x,v)^p.
\end{equation}
\end{corollary}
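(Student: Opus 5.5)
The argument is a direct comparison of infima, followed by an application of Corollary~\ref{cor:lmo-power-scaling}. Throughout, the $\DGUC_p(\alpha)$ hypothesis is inherited from the surrounding context, as in Corollary~\ref{cor:lmo-power-scaling}.

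First I will fix an arbitrary oracle configuration $(x,w,v)\in\E_{\C}$. By definition of $\E_{\C}$ we have $x\neq v$, both points lie in $\C$, and $(D\Log_x)_v$ exists. Hence the pair $(x,v)$ is one of the pairs over which the infimum defining $\Lambda_{\C}$ is taken. For any linear map $T$ between finite-dimensional inner product spaces, the adjoint $T^*$ has the same singular values as $T$. I will apply this to $T=(D\Log_x)_v$, viewed as a map between equal-dimensional spaces $T_v\M\to T_x\M$, so that $\sigma_{\min}(T^*)=\sigma_{\min}(T)$. This gives
\[
\norm{(D\Log_x)_v^*w}_v\geq\sigma_{\min}\bigl((D\Log_x)_v\bigr)\norm{w}_x\geq\Lambda_{\C}\norm{w}_x .
\]
Dividing by $\norm{w}_x>0$ and taking the infimum over $\E_{\C}$ yields $\Lambda_{\C}^{\LMO}\geq\Lambda_{\C}$. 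If $\E_{\C}$ is empty, the infimum is $+\infty$ and the inequality holds trivially.

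For the scaling inequality \eqref{eq:global-scalar-scaling}, I will split into cases on the oracle configuration. For $(x,w,v)\in\E_{\C}$, Corollary~\ref{cor:lmo-power-scaling} gives the bound with constant $\alpha\Lambda_{\C}^{\LMO}$. Since $\Lambda_{\C}^{\LMO}\geq\Lambda_{\C}$ and the remaining factors $\alpha$, $\norm{w}_x$, $D^{1-p}d(x,v)^p$ are all nonnegative, the bound follows with constant $\alpha\Lambda_{\C}$. Alternatively, I can bypass $\Lambda_{\C}^{\LMO}$ entirely: insert the singular-value bound directly into \eqref{eq:exact-power-transfer} from Theorem~\ref{thm:exact-jacobi-transfer}.

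The degenerate configurations excluded from $\E_{\C}$ need separate checks:
\begin{itemize}
\item If $v=x$, the right-hand side vanishes.
\item If $w=0$, both sides vanish.
\end{itemize}
So the inequality holds for every oracle configuration at which the logarithm differential is defined.

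The only point needing care, and the step I expect to be the main obstacle, is the identification $\sigma_{\min}$ of the adjoint with that of the map. It requires both tangent spaces to have dimension $\dim\M$, so that $\sigma_{\min}$ is the genuine smallest singular value and $\norm{T^*w}\geq\sigma_{\min}(T)\norm{w}$ holds. This follows from the singular value decomposition $T=U\Sigma V^*$ with orthogonal $U,V$, since then $\norm{T^*w}=\norm{\Sigma U^*w}\geq\sigma_{\min}\norm{w}$.
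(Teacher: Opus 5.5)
Your proposal is correct and follows the paper's proof: the adjoint has the same singular values as $(D\Log_x)_v$, so $\norm{(D\Log_x)_v^*w}_v\geq\sigma_{\min}\bigl((D\Log_x)_v\bigr)\norm{w}_x$; taking the infimum over oracle configurations gives $\Lambda_{\C}^{\LMO}\geq\Lambda_{\C}$, and the scaling inequality then follows from Corollary~\ref{cor:lmo-power-scaling}. Your explicit checks of the degenerate cases $v=x$, $w=0$ and an empty $\E_{\C}$ are a harmless refinement that the paper leaves implicit.
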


\begin{proof}
For every relevant pair $(x,v)$ and every $w\in T_x\M$,
the linear map $(D\Log_x)_v$ and its metric adjoint have the same singular
values. Therefore
\[
    \norm{(D\Log_x)_v^*w}_v
    \geq
    \sigma_{\min}\!\left((D\Log_x)_v\right)
    \norm{w}_x.
\]
Taking the infimum over oracle configurations yields
\[
    \Lambda_{\C}^{\LMO}
    \geq
    \Lambda_{\C}.
\]
The claimed scaling inequality then follows from
Corollary~\ref{cor:lmo-power-scaling}.
\end{proof}

\begin{remark}[Oracle restriction versus uniform distortion]
\label{rem:directional-sharper}
The distinction between
$\Lambda_{\C}^{\LMO}$ and $\Lambda_{\C}$ is structural.
The global quantity $\Lambda_{\C}$ controls every tangent direction for
every admissible pair of feasible points, whereas
$\Lambda_{\C}^{\LMO}$ probes only directions and endpoints selected by the
linear minimization oracle. Hence, the latter may remain bounded away from
zero even when a global singular-value estimate is weak or overly
conservative. Section~\ref{sec:certificates} develops both types of
certificates.
\end{remark}

\subsection{The quadratic case and Jacobi distortion}
\label{subsec:p2-jacobi}

For $p=2$, Theorem~\ref{thm:exact-jacobi-transfer} gives
\begin{equation}
\label{eq:p2-jacobi-transfer}
    \ip{w}{\Log_x v}_x
    \geq
    \alpha D^{-1}d(x,v)^2
    \norm{(D\Log_x)_v^*w}_v.
\end{equation}
This identity clarifies the relation with the quadratic strong-set framework
of Scieur et al.~\cite{scieur2026strongly}. Their double-geodesic
strong-convexity condition leads to an approximate Riemannian scaling
inequality through a double-exponential analysis, whereas their stronger
Riemannian strong-convexity condition yields an exact scaling relation.
The inequality~\eqref{eq:p2-jacobi-transfer} identifies explicitly the
differential factor that arises from endpoint perturbations: double-geodesic
feasible-set curvature gives an exact direction-sensitive inequality, while
a scalar scaling constant is obtained by controlling
$
  \displaystyle  \frac{
        \norm{(D\Log_x)_v^*w}_v
    }{
        \norm{w}_x
    }.
$
In Euclidean space,
\[
    (D\Log_x)_v=I,
\]
after the canonical identification of tangent spaces, and the differential
factor reduces to one. On a curved manifold, transverse directions can be
expanded or contracted by the logarithm differential. Theorem
\ref{thm:exact-jacobi-transfer} retains this effect exactly rather than
absorbing it into an additive approximation term. Thus, the feasible-set
modulus and the ambient differential distortion remain multiplicatively
separated.

\begin{remark}[No curvature hypothesis in the transfer theorem]
\label{rem:no-curvature-transfer}
Sectional-curvature bounds are not required for
Theorem~\ref{thm:exact-jacobi-transfer}. They enter only when the
direction-dependent factor
\[
    \frac{
        \norm{(D\Log_x)_v^*w}_v
    }{
        \norm{w}_x
    }
\]
is replaced by a tractable scalar lower bound. Consequently, the power $p$
is inherited from the feasible-set modulus, whereas ambient curvature
affects the conditioning of the resulting scaling margin rather than its
power. Section~\ref{sec:certificates} makes this distinction quantitative.
\end{remark}

\section{Curvature and spectral certificates}
\label{sec:certificates}

The exact transfer theorem of Section~\ref{sec:jacobi-scaling} retains the
direction-dependent factor
$
  \displaystyle  \frac{\norm{(D\Log_x)_v^*w}_v}{\norm{w}_x}.
$
We now develop two complementary ways of controlling this quantity. The
first uses sectional-curvature comparison and applies on general Riemannian
manifolds. The second exploits the symmetric-space geometry of the
positive-definite cone and gives an exact spectral formula. The former is
portable but generally conservative; the latter retains the anisotropic
spectral information of the particular pair of positive-definite matrices.

\subsection{A Rauch-comparison certificate}
\label{subsec:rauch-certificate}

For $K\in\R$, define the model function
\begin{equation}
\label{eq:model-S-kappa}
    S_K(r)
    :=
    \begin{cases}
        \dfrac{\sin(\sqrt{K}\,r)}{\sqrt{K}},
        & K>0,\\[1.2ex]
        r,
        & K=0,\\[1.2ex]
        \dfrac{\sinh(\sqrt{-K}\,r)}{\sqrt{-K}},
        & K<0.
    \end{cases}
\end{equation}
The following proposition is the form of Rauch comparison needed in the
sequel; see, e.g., standard treatments of Jacobi-field comparison
\cite{petersen2016riemannian}.

\begin{proposition}[Rauch certificate]
\label{prop:rauch-certificate}
Let $x,v\in\M$ be joined by a unique minimizing geodesic of length
\[
    r:=d(x,v)>0,
\]
and assume that $v$ is a regular point of $\Exp_x$ along this geodesic.
Suppose that the sectional curvatures of all two-planes containing the
geodesic direction satisfy
\[
    \Sec\geq K_-
\]
along the segment, for some $K_-\in\R$. When $K_->0$, assume in addition
that
\[
    r<\frac{\pi}{\sqrt{K_-}},
\]
so that the comparison model remains before its first conjugate point.
Then
\begin{equation}
\label{eq:rauch-dlog}
    \sigma_{\min}\!\bigl((D\Log_x)_v\bigr)
    \geq
    \Psi_{K_-}(r)
    :=
    \min\left\{
        1,\frac{r}{S_{K_-}(r)}
    \right\}.
\end{equation}
Consequently, if the same assumptions hold for every pair in a regular
region containing $\C$, then
\begin{equation}
\label{eq:rauch-global-certificate}
    \Lambda_{\C}
    \geq
    \inf_{0<r\leq D}
    \Psi_{K_-}(r).
\end{equation}
\end{proposition}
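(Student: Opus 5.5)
The plan is to pass from $\Log_x$ to $\Exp_x$ and bound the operator norm of the exponential differential from above. Since $v$ is a regular point of $\Exp_x$ along the minimizing geodesic and $\Log_x$ is the local inverse of $\Exp_x$ near $\xi:=\Log_x v$, we have
\[
(D\Log_x)_v=\bigl((D\Exp_x)_\xi\bigr)^{-1}.
\]
Hence $\sigma_{\min}\bigl((D\Log_x)_v\bigr)=1/\sigma_{\max}\bigl((D\Exp_x)_\xi\bigr)$. It therefore suffices to prove
\[
\norm{(D\Exp_x)_\xi\eta}_v\leq\max\Bigl\{1,\tfrac{S_{K_-}(r)}{r}\Bigr\}\norm{\eta}_x
\qquad\text{for all }\eta\in T_x\M,
\]
because the reciprocal of $\max\{1,S_{K_-}(r)/r\}$ is exactly $\Psi_{K_-}(r)$.

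I would split $\eta=\eta_r+\eta_\perp$ into its component along $\xi$ and its component orthogonal to $\xi$. For the radial part, the identity \eqref{eq:radial-log-identity}, read in reverse, gives $(D\Exp_x)_\xi\,\xi=\dot\gamma(1)$ with $\norm{\dot\gamma(1)}_v=r=\norm{\xi}_x$. So radial vectors are mapped isometrically. By the Gauss lemma, $(D\Exp_x)_\xi\eta_\perp$ is orthogonal to $\dot\gamma(1)$. The images of the two components are therefore orthogonal, and
\[
\norm{(D\Exp_x)_\xi\eta}_v^2=\norm{\eta_r}_x^2+\norm{(D\Exp_x)_\xi\eta_\perp}_v^2 .
\]
This reduces the claim to a bound on the transverse part alone.

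For the transverse part, reparametrize $\gamma$ by arclength $s\in[0,r]$. Then $(D\Exp_x)_\xi\eta_\perp=J(r)/r$, where $J$ is the normal Jacobi field along $\gamma$ with $J(0)=0$ and $J'(0)=\eta_\perp$. The key step is Rauch comparison in the direction in which a lower bound $\Sec\geq K_-$ on planes containing $\dot\gamma$ yields an upper bound on Jacobi fields: $\norm{J(s)}\leq S_{K_-}(s)\norm{J'(0)}$ on $[0,r]$. This form of Rauch's theorem (with $\M$ playing the role of the less curved space) requires that $\gamma$ have no conjugate points on $(0,r]$. That follows here because the geodesic is minimizing, which excludes conjugate points before $r$, and because $v$ is a regular point of $\Exp_x$, which excludes one at $r$. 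It also requires the model to be free of conjugate points on $(0,r]$, which holds automatically for $K_-\leq0$ and is exactly the hypothesis $r<\pi/\sqrt{K_-}$ when $K_->0$. The result is
\[
\norm{(D\Exp_x)_\xi\eta_\perp}_v\leq\frac{S_{K_-}(r)}{r}\norm{\eta_\perp}_x .
\]
Combining this with the orthogonal splitting gives the desired operator-norm bound, and inverting it proves \eqref{eq:rauch-dlog}.

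The global statement \eqref{eq:rauch-global-certificate} then follows by applying \eqref{eq:rauch-dlog} to each pair $x\neq v$ in $\C$. Each such pair has $0<d(x,v)\leq D$, so taking the infimum over $r\in(0,D]$ bounds $\Lambda_{\C}$ from below. The main obstacle I expect is invoking the correct version of Rauch's theorem and verifying its no-conjugate-point hypothesis on $\M$ rather than only on the model. This is precisely where minimality of the geodesic and regularity of $v$ enter, and the argument should cite a standard statement such as the one in \cite{petersen2016riemannian} carefully.
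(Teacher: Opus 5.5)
Your proposal is correct and follows essentially the same route as the paper: invert $(D\Exp_x)_\xi$, observe that the radial direction is mapped isometrically, and bound transverse Jacobi fields via Rauch comparison under $\Sec\geq K_-$. Your explicit Gauss-lemma orthogonal splitting justifies the paper's tacit claim that radial and transverse singular values can be treated separately; one wording slip is that under $\Sec\geq K_-$ the manifold $\M$ is the \emph{more} curved space in the comparison, not the less curved one (which is why the no-conjugate-point hypothesis falls on $\gamma$ in $\M$), though the hypotheses you actually verify are the right ones.
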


\begin{proof}
Let
\[
    \xi:=\Log_xv,
    \qquad
    \norm{\xi}_x=r.
\]
The differential of $\Exp_x$ at $\xi$ has singular value one in the radial
direction. For a direction orthogonal to $\xi$, the differential is
represented by a Jacobi field with vanishing initial value and prescribed
initial derivative.
Rauch comparison under the lower sectional-curvature bound
$\Sec\geq K_-$ gives
\[
    \norm{(D\Exp_x)_\xi u}_v
    \leq
    \frac{S_{K_-}(r)}{r}\norm{u}_x
\]
for transverse $u$, on the stated comparison interval. Since $v$ is regular,
$(D\Exp_x)_\xi$ is invertible and
\[
    (D\Log_x)_v
    =
    \bigl((D\Exp_x)_\xi\bigr)^{-1}.
\]
The radial singular value of $(D\Log_x)_v$ is therefore one, while its
transverse singular values are bounded below by
$
  \displaystyle  \frac{r}{S_{K_-}(r)}.
$
Taking the smaller of the radial and transverse bounds proves
\eqref{eq:rauch-dlog}. The uniform estimate
\eqref{eq:rauch-global-certificate} follows by taking the infimum over the
relevant pairs.
\end{proof}

The direction of the curvature comparison is important. A lower
sectional-curvature bound limits the transverse expansion of the exponential
map from above and therefore yields a lower bound on the inverse
differential $D\Log$. By contrast, upper curvature information is relevant
to normal-neighborhood and conjugate-radius control but does not provide the
lower scaling factor in \eqref{eq:rauch-dlog}.

\begin{corollary}[Uniform negative-curvature certificate]
\label{cor:negative-curvature-certificate}
Suppose that
\[
    \Sec\geq-\kappa^2,
    \qquad
    \kappa\geq0,
\]
on the relevant regular normal region. Then, for every relevant pair
$x,v$ with $r=d(x,v)$,
\begin{equation}
\label{eq:hyperbolic-certificate}
    \sigma_{\min}\!\bigl((D\Log_x)_v\bigr)
    \geq
    \frac{\kappa r}{\sinh(\kappa r)},
\end{equation}
where the right-hand side is interpreted as $1$ for $\kappa=0$.
Consequently, if $r\leq D$ for all relevant pairs,
\begin{equation}
\label{eq:hyperbolic-global-certificate}
    \Lambda_{\C}
    \geq
    \frac{\kappa D}{\sinh(\kappa D)}.
\end{equation}
If, in addition, $\C\in\DGUC_p(\alpha)$, then every corresponding LMO
configuration satisfies
\begin{equation}
\label{eq:hyperbolic-prsi}
    \ip{w}{\Log_xv}_x
    \geq
    \alpha
    \frac{\kappa D}{\sinh(\kappa D)}
    D^{1-p}
    \norm{w}_x
    d(x,v)^p.
\end{equation}
\end{corollary}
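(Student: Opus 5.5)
The corollary is a specialization of Proposition~\ref{prop:rauch-certificate} with $K_-=-\kappa^2$, combined with Corollary~\ref{cor:singular-value-scaling}. The proof has three steps.

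First, I take $K_-=-\kappa^2$ in Proposition~\ref{prop:rauch-certificate}. For $\kappa>0$ the model function \eqref{eq:model-S-kappa} is $S_{-\kappa^2}(r)=\sinh(\kappa r)/\kappa$. No conjugate-radius restriction is needed, since that hypothesis only arises when $K_->0$. This gives
\[
    \Psi_{-\kappa^2}(r)=\min\left\{1,\frac{\kappa r}{\sinh(\kappa r)}\right\}.
\]
The elementary inequality $\sinh s\geq s$ for $s\geq0$ shows the minimum equals $\kappa r/\sinh(\kappa r)$, which proves \eqref{eq:hyperbolic-certificate}. For $\kappa=0$ we have $S_0(r)=r$ and $\Psi_0\equiv1$, which agrees with the continuous extension $\lim_{s\to0}s/\sinh s=1$. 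The regularity and unique-geodesic hypotheses of the proposition are exactly the standing ``relevant regular normal region'' assumption, so they carry over unchanged.

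Second, I pass to the uniform bound. The function $s\mapsto s/\sinh s$ is nonincreasing on $[0,\infty)$, since its derivative has the sign of $\sinh s-s\cosh s=-\int_0^s\tau\sinh\tau\,d\tau\le0$, equivalently $\tanh s\leq s$. Hence for every relevant pair with $0<r\leq D$,
\[
    \frac{\kappa r}{\sinh(\kappa r)}\geq\frac{\kappa D}{\sinh(\kappa D)}.
\]
Taking the infimum over pairs, as in \eqref{eq:rauch-global-certificate}, gives \eqref{eq:hyperbolic-global-certificate}.

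Third, I assume $\C\in\DGUC_p(\alpha)$. The bound just obtained makes $\Lambda_{\C}$ positive, so Corollary~\ref{cor:singular-value-scaling} applies with this lower bound in place of $\Lambda_{\C}$. Equivalently, I combine \eqref{eq:exact-power-transfer} from Theorem~\ref{thm:exact-jacobi-transfer} with $\norm{(D\Log_x)_v^*w}_v\geq\sigma_{\min}((D\Log_x)_v)\norm{w}_x$, using that adjoints share singular values. This yields \eqref{eq:hyperbolic-prsi}. The degenerate case $v=x$ is trivial, since both sides vanish.

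None of these steps is substantial. The only point needing care is the bookkeeping of the domain: the Rauch bound must hold along every oracle geodesic, which requires every LMO pair to lie in the regular normal region where $\Sec\geq-\kappa^2$. The hypothesis is phrased precisely to guarantee this.
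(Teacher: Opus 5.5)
Your proposal is correct and follows the paper's proof: specialize Proposition~\ref{prop:rauch-certificate} to $K_-=-\kappa^2$, use that $s\mapsto s/\sinh s$ is decreasing together with $r\leq D$, and finish with Corollary~\ref{cor:singular-value-scaling}. You also make explicit two points the paper leaves implicit, namely that $\sinh s\geq s$ collapses $\min\{1,\kappa r/\sinh(\kappa r)\}$ to $\kappa r/\sinh(\kappa r)$, and why $s/\sinh s$ is monotone.
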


\begin{proof}
For $K_-=-\kappa^2$,
\[
    S_{K_-}(r)
    =
    \frac{\sinh(\kappa r)}{\kappa},
\]
with the usual continuous interpretation at $\kappa=0$.
Proposition~\ref{prop:rauch-certificate} therefore gives
\eqref{eq:hyperbolic-certificate}. Since
\[
    s\longmapsto\frac{s}{\sinh s}
\]
is decreasing on $[0,\infty)$,
\eqref{eq:hyperbolic-global-certificate} follows from $r\leq D$.
The scaling estimate \eqref{eq:hyperbolic-prsi} then follows from
Corollary~\ref{cor:singular-value-scaling}.
\end{proof}

For small $\kappa D$,
\begin{equation}
\label{eq:hyperbolic-small-radius}
    \frac{\kappa D}{\sinh(\kappa D)}
    =
    1-\frac{\kappa^2D^2}{6}
    +\frac{7\kappa^4D^4}{360}
    +O\!\left((\kappa D)^6\right).
\end{equation}
Thus, on a bounded regular region, negative curvature perturbs the scalar
scaling certificate only at second order in $\kappa D$ near the flat limit,
while leaving the power inherited from the feasible-set modulus unchanged.

\begin{remark}[Nonnegative lower curvature bounds]
\label{rem:nonnegative-curvature}
Suppose that $K_-\geq0$ and that the relevant logarithm maps remain regular
along the comparison interval. Since
\[
    S_{K_-}(r)\leq r,
\]
Proposition~\ref{prop:rauch-certificate} yields
\[
    \sigma_{\min}\!\bigl((D\Log_x)_v\bigr)\geq1.
\]
Hence, throughout the regular comparison region, nonnegative lower
sectional curvature introduces no loss in the lower differential
certificate for the logarithm map. The statement applies up to the
corresponding cut or conjugate threshold.
\end{remark}

\subsection{Constant-curvature sanity checks}
\label{subsec:space-form-checks}

The comparison factor is attained in the corresponding constant-curvature
models. In Euclidean space,
\[
    \sigma\bigl((D\Log_x)_v\bigr)=1
\]
in every direction. On a sphere of constant sectional curvature
$+\kappa^2$, before the first conjugate point, the radial singular value is
one and every transverse singular value equals
$
    \frac{\kappa r}{\sin(\kappa r)}.
$
On hyperbolic space of constant sectional curvature $-\kappa^2$, the radial
singular value remains one and every transverse singular value equals
$
    \frac{\kappa r}{\sinh(\kappa r)}.
$
Thus, positive curvature contracts transverse exponential variations and
negative curvature expands them, with the inverse behavior for the
logarithm differential.

These model spaces also illustrate the limitation of a diameter-only
certificate. In negative curvature,
\begin{equation}
\label{eq:hyperbolic-large-radius}
    \frac{\kappa D}{\sinh(\kappa D)}
    \sim
    2\kappa D e^{-\kappa D},
    \qquad
    \kappa D\to\infty.
\end{equation}
Hence, the worst-case global certificate can become exponentially small even
when the directional distortion encountered along the actual LMO trajectory
is substantially better.

\subsection{An exact spectral certificate on the positive-definite cone}
\label{subsec:spd-certificate}

We next specialize to the manifold $\SPD(n)$ of real symmetric
positive-definite $n\times n$ matrices endowed with the affine-invariant
Riemannian metric
\begin{equation}
\label{eq:airm-metric}
    g_X(U,V)
    :=
    \tr(X^{-1}UX^{-1}V).
\end{equation}
This manifold is complete, simply connected, and of nonpositive sectional
curvature. Congruence transformations
\[
    X\longmapsto GXG^\top,
    \qquad G\in\GL(n),
\]
act by isometries; see, e.g., \cite{bhatia2007positive}. The exponential and
logarithm maps are
\begin{align}
    \Exp_X(U)
    &=
    X^{1/2}
    \exp\!\left(X^{-1/2}UX^{-1/2}\right)
    X^{1/2},
    \label{eq:spd-exp}\\
    \Log_X(Y)
    &=
    X^{1/2}
    \log\!\left(X^{-1/2}YX^{-1/2}\right)
    X^{1/2}.
    \label{eq:spd-log}
\end{align}

A global curvature bound already yields a Rauch certificate. The symmetric
space structure, however, permits the singular spectrum of the logarithm
differential to be computed exactly.

\begin{theorem}[Exact AIRM logarithm distortion]
\label{thm:spd-exact-distortion}
Let $X,Y\in\SPD(n)$ and write
\begin{equation}
\label{eq:spd-relative-spectrum}
    X^{-1/2}YX^{-1/2}
    =
    Q\exp(A)Q^\top,
    \qquad
    A=\diag(a_1,\ldots,a_n).
\end{equation}
Viewed as a linear map between the AIRM inner-product spaces
\[
    (T_Y\SPD(n),g_Y)
    \quad\text{and}\quad
    (T_X\SPD(n),g_X),
\]
the differential $(D\Log_X)_Y$ has singular value $1$ on the commuting
directions and, for each $i<j$, singular value
\begin{equation}
\label{eq:spd-root-singular}
    \phi\!\left(\frac{|a_i-a_j|}{2}\right),
    \qquad
    \phi(s):=\frac{s}{\sinh s},
\end{equation}
on the corresponding off-diagonal root direction. Consequently,
\begin{equation}
\label{eq:spd-sigma-min}
    \sigma_{\min}\!\bigl((D\Log_X)_Y\bigr)
    =
    \phi\!\left(
        \frac12\max_{i,j}|a_i-a_j|
    \right)
    =
    \phi\!\left(
        \frac12
        \log\cond(X^{-1/2}YX^{-1/2})
    \right).
\end{equation}
Equivalently, with
\[
    q:=\cond(X^{-1/2}YX^{-1/2})\geq1,
\]
one has
\begin{equation}
\label{eq:spd-condition-number-factor}
    \sigma_{\min}\!\bigl((D\Log_X)_Y\bigr)
    =
    \frac{\sqrt q\,\log q}{q-1},
\end{equation}
with the continuous value $1$ at $q=1$.
\end{theorem}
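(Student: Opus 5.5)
The plan is to reduce to the base point $X=I$ with a diagonal relative matrix, compute the differential of the matrix exponential there, and invert it. Congruence $Y\mapsto GYG^\top$ with $G=Q^\top X^{-1/2}$ is an isometry of $\SPD(n)$. Isometries intertwine exponential and logarithm maps, $\Log_{\phi(X)}\phi(Y)=D\phi_X(\Log_XY)$, so $(D\Log_X)_Y$ is conjugate, through the isometric linear maps $D\phi_X$ and $D\phi_Y$, to $(D\Log_I)_{\Lambda}$ with $\Lambda=\exp(A)$ diagonal. Singular values are preserved under such conjugation. It therefore suffices to treat $X=I$, $Y=\Lambda$, where $\Log_I=\log$ (the matrix logarithm) and $g_I(U,V)=\tr(UV)$ is the Frobenius inner product.

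Since $\log$ is the inverse of $\exp$ near $A$, we have $(D\Log_I)_\Lambda=((D\exp)_A)^{-1}$, so I will compute $(D\exp)_A$ explicitly. By the Daleckii--Krein formula, $(D\exp)_A[H]_{ij}=H_{ij}\,\frac{e^{a_i}-e^{a_j}}{a_i-a_j}$, with the entry interpreted as $e^{a_i}$ when $a_i=a_j$. This map sends $(T_I,\text{Frobenius})$ to $(T_\Lambda,g_\Lambda)$. Use the orthonormal basis $E_{ii}$ and $(E_{ij}+E_{ji})/\sqrt2$ at $I$. At $\Lambda$, $g_\Lambda(U,V)=\sum_{ij}U_{ij}V_{ij}e^{-a_i}e^{-a_j}$, so the basis vectors $e^{a_i}E_{ii}$ and $e^{(a_i+a_j)/2}(E_{ij}+E_{ji})/\sqrt2$ are $g_\Lambda$-orthonormal. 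The map $(D\exp)_A$ is diagonal in these bases:
\begin{itemize}
\item on the diagonal (commuting) directions the factor is $e^{a_i}/e^{a_i}=1$;
\item on the off-diagonal pair $(i,j)$ the factor is $\frac{e^{a_i}-e^{a_j}}{(a_i-a_j)e^{(a_i+a_j)/2}}=\frac{\sinh s}{s}$ with $s=|a_i-a_j|/2$.
\end{itemize}
Thus $(D\exp)_A$ has singular values $1$ and $\sinh s/s$. Inverting gives the singular values $1$ and $\phi(|a_i-a_j|/2)$ for $(D\Log_X)_Y$, which is \eqref{eq:spd-root-singular}.

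For \eqref{eq:spd-sigma-min}, note that $\phi$ is decreasing on $[0,\infty)$ with $\phi\le1=\phi(0)$. The minimum singular value is therefore attained at the largest spread, which is $\max_{i,j}|a_i-a_j|=\log\lambda_{\max}-\log\lambda_{\min}=\log q$. The $n=1$ or $q=1$ case gives the value $1$. Substituting $s=\tfrac12\log q$ gives $\sinh s=(q-1)/(2\sqrt q)$, hence $\phi(s)=\frac{\sqrt q\log q}{q-1}$, which is \eqref{eq:spd-condition-number-factor}. The continuous value at $q=1$ comes from the limit $s\to0$.

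The main obstacle is bookkeeping rather than conception. One must check that the congruence isometry really turns the differential into a conjugation by isometries, which uses naturality of $\Log$ under isometries. One must also use the correct $g_\Lambda$-orthonormal frame, so that the weights $e^{-(a_i+a_j)/2}$ combine with the divided difference into $\sinh s/s$; the result must not be mistakenly computed in Frobenius norms at $\Lambda$. I would also verify the divided-difference formula by diagonal-plus-perturbation expansion, or cite it (e.g., \cite{bhatia2007positive}).
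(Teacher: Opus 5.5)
Your proposal is correct and follows the paper's proof essentially step for step. Both arguments use the congruence reduction to the pair $(I,e^{A})$ and the divided-difference action of $D\exp_A$; the paper writes it as $\int_0^1 e^{(1-t)A}He^{tA}\,dt$, which evaluates to your Daleckii--Krein expression. Both then convert to $g_{e^A}$-orthonormal coordinates to get factors $1$ and $\sinh s/s$, invert, and conclude from the monotonicity of $\phi$ together with $\max_{i,j}|a_i-a_j|=\log q$. Your explicit orthonormal frames and the naturality argument for the isometric reduction spell out details that the paper states more briefly.
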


\begin{proof}
By congruence invariance, the singular spectrum is unchanged if the pair
$(X,Y)$ is mapped isometrically to
\[
    (I,X^{-1/2}YX^{-1/2}).
\]
After an orthogonal change of basis, it therefore suffices to consider
\[
    X=I,
    \qquad
    Y=e^A,
    \qquad
    A=\diag(a_1,\ldots,a_n).
\]
At $I$, the AIRM coincides with the Frobenius inner product. The differential
of the matrix exponential at $A$ is
\[
    D\exp_A[H]
    =
    \int_0^1
    e^{(1-t)A}He^{tA}\,dt.
\]
For a diagonal matrix unit, this differential multiplies the corresponding
component by $e^{a_i}$. For an off-diagonal symmetric direction associated
with $i<j$, it multiplies the corresponding entry by
\[
    \frac{e^{a_i}-e^{a_j}}{a_i-a_j}
    =
    e^{(a_i+a_j)/2}
    \frac{
        \sinh\!\left((a_i-a_j)/2\right)
    }{
        (a_i-a_j)/2
    },
\]
with the continuous interpretation when $a_i=a_j$.
The AIRM norm at $Y=e^A$ is
\[
    \norm{K}_Y
    =
    \norm{e^{-A/2}Ke^{-A/2}}_F.
\]
Thus, after converting the output of $D\exp_A$ to AIRM-orthonormal
coordinates at $Y$, the singular values of
\[
    (D\Exp_I)_A:
    (T_I\SPD(n),g_I)
    \longrightarrow
    (T_Y\SPD(n),g_Y)
\]
are $1$ on commuting directions and
\[
    \frac{
        \sinh\!\left(|a_i-a_j|/2\right)
    }{
        |a_i-a_j|/2
    }
\]
on the $(i,j)$ root directions.
Since $\SPD(n)$ with the affine-invariant metric is a Hadamard manifold,
$\Exp_I$ is globally invertible and
\[
    (D\Log_I)_Y
    =
    \bigl((D\Exp_I)_A\bigr)^{-1}.
\]
Its singular values are therefore the reciprocals:
$1$ and 
$
    \phi\!\left(\frac{|a_i-a_j|}{2}\right).
$
Because $\phi$ is decreasing on $[0,\infty)$, the smallest singular value is
attained at the largest spectral spread. Moreover,
\[
    \max_{i,j}|a_i-a_j|
    =
    \log
    \frac{
        \lambda_{\max}(X^{-1/2}YX^{-1/2})
    }{
        \lambda_{\min}(X^{-1/2}YX^{-1/2})
    }
    =
    \log q.
\]
Finally,
\[
    \phi\!\left(\frac{\log q}{2}\right)
    =
    \frac{\sqrt q\,\log q}{q-1},
\]
which proves \eqref{eq:spd-sigma-min} and
\eqref{eq:spd-condition-number-factor}.
\end{proof}

Theorem~\ref{thm:spd-exact-distortion} shows that geodesic distance alone
does not characterize logarithmic distortion on $\SPD(n)$. For example, if
$
    Y=cX$, 
$c>0,
$
then all logarithmic eigenvalues $a_i$ are equal and
\[
    \sigma_{\min}\!\bigl((D\Log_X)_Y\bigr)=1,
\]
although
\[
    d(X,Y)
    =
    \sqrt n\,|\log c|
\]
can be arbitrarily large. The relevant quantity for the distortion is thus
relative spectral spread rather than common logarithmic scale.

\begin{definition}[Spectral diameter]
\label{def:spectral-diameter}
For $\C\subset\SPD(n)$, define
\begin{equation}
\label{eq:spectral-diameter}
    \Delta_{\mathrm{spec}}(\C)
    :=
    \sup_{X,Y\in\C}
    \log\cond(X^{-1/2}YX^{-1/2}).
\end{equation}
\end{definition}

\begin{corollary}[SPD spectral scaling certificate]
\label{cor:spd-scaling-certificate}
Suppose that
\[
    \Delta_{\mathrm{spec}}(\C)<\infty.
\]
Then, 
\begin{equation}
\label{eq:spd-lambda-certificate}
    \Lambda_{\C}
    \geq
    \phi\!\left(
        \frac{\Delta_{\mathrm{spec}}(\C)}{2}
    \right).
\end{equation}
If, in addition, $\C\in\DGUC_p(\alpha)$, then every LMO configuration
satisfies
\begin{equation}
\label{eq:spd-prsi}
    \ip{w}{\Log_XV}_X
    \geq
    \alpha
    \phi\!\left(
        \frac{\Delta_{\mathrm{spec}}(\C)}{2}
    \right)
    D^{1-p}
    \norm{w}_X
    d(X,V)^p.
\end{equation}
\end{corollary}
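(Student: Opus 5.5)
The plan is to combine the exact spectral formula of Theorem~\ref{thm:spd-exact-distortion} with the monotonicity of $\phi$, and then to invoke Corollary~\ref{cor:singular-value-scaling}.

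\emph{Lower bound on $\Lambda_{\C}$.} First observe that $\SPD(n)$ with the affine-invariant metric is a Hadamard manifold. Hence $\Log_X$ is globally defined and smooth, and $(D\Log_X)_Y$ exists for every pair $X,Y\in\C$. The infimum defining $\Lambda_{\C}$ in \eqref{eq:lambda-global} is therefore taken over all pairs $X\neq Y$ in $\C$.

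Fix such a pair. By \eqref{eq:spd-sigma-min},
\[
    \sigma_{\min}\bigl((D\Log_X)_Y\bigr)
    =
    \phi\!\left(\tfrac12\log\cond(X^{-1/2}YX^{-1/2})\right).
\]
By Definition~\ref{def:spectral-diameter}, the argument is at most $\Delta_{\mathrm{spec}}(\C)/2$, which is finite by hypothesis. I will check that $\phi(s)=s/\sinh s$ is nonincreasing on $[0,\infty)$; this was already used in the proof of Theorem~\ref{thm:spd-exact-distortion}. It follows that each singular value is at least $\phi(\Delta_{\mathrm{spec}}(\C)/2)$. Taking the infimum over pairs gives \eqref{eq:spd-lambda-certificate}.

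\emph{Positivity and the scaling inequality.} Since $\Delta_{\mathrm{spec}}(\C)<\infty$ and $\phi>0$ on $[0,\infty)$, we have $\Lambda_{\C}>0$. The hypotheses of Corollary~\ref{cor:singular-value-scaling} then hold: the logarithm differential exists for all relevant oracle pairs, and the infimum is positive. That corollary yields \eqref{eq:global-scalar-scaling} with $\Lambda_{\C}$.

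Replacing $\Lambda_{\C}$ by the smaller constant $\phi(\Delta_{\mathrm{spec}}(\C)/2)$ only weakens the right-hand side. This is legitimate because the other factors, $\alpha$, $\norm{w}_X$ and $D^{1-p}d(X,V)^p$, are all nonnegative. The result is \eqref{eq:spd-prsi}.

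\emph{Degenerate configurations.} If $V=X$ or $w=0$, both sides of \eqref{eq:spd-prsi} vanish or the right-hand side is zero. In that case the inequality reduces to the trivial case already covered in Theorem~\ref{thm:exact-jacobi-transfer}.

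\emph{Expected obstacle.} There is no substantive difficulty; the argument is a direct chaining of earlier results. The only points needing care are bookkeeping. One is confirming that the uniqueness and regularity assumptions of Theorem~\ref{thm:exact-jacobi-transfer} hold automatically on the Hadamard manifold $\SPD(n)$; $\C$ is assumed uniquely geodesically convex as part of $\DGUC_p(\alpha)$. The other is the monotonicity of $\phi$. For the latter, the derivative of $\phi$ has the sign of $\sinh s - s\cosh s$, which is nonpositive because $\tanh s\le s$ for $s\ge0$.
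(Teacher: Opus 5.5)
Your proposal is correct and follows essentially the same route as the paper: combine the exact formula of Theorem~\ref{thm:spd-exact-distortion} with the monotonicity of $\phi$ to bound $\sigma_{\min}\bigl((D\Log_X)_Y\bigr)$ uniformly by $\phi\bigl(\Delta_{\mathrm{spec}}(\C)/2\bigr)$, take the infimum, and then invoke Corollary~\ref{cor:singular-value-scaling}. Your additional checks (positivity of $\Lambda_{\C}$, regularity of $\Log_X$ on the Hadamard manifold, and the derivative test for $\phi$) are details that the paper leaves implicit.
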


\begin{proof}
Theorem~\ref{thm:spd-exact-distortion} and the monotonicity of $\phi$ give,
for every $X,Y\in\C$,
\[
    \sigma_{\min}\!\bigl((D\Log_X)_Y\bigr)
    \geq
    \phi\!\left(
        \frac{\Delta_{\mathrm{spec}}(\C)}{2}
    \right).
\]
Hence, \eqref{eq:spd-lambda-certificate} follows. The scaling estimate
\eqref{eq:spd-prsi} is then an immediate consequence of
Corollary~\ref{cor:singular-value-scaling}.
\end{proof}

\begin{remark}[Directional versus worst-case spectral distortion]
\label{rem:spd-directional}
The spectral diameter still gives a worst-case certificate. The exact
quantity in Theorem~\ref{thm:exact-jacobi-transfer} is directional. In the
root-space decomposition associated with
\eqref{eq:spd-relative-spectrum}, commuting components are undistorted,
whereas an $(i,j)$ component is weighted by
\[
    \phi\!\left(\frac{|a_i-a_j|}{2}\right).
\]
Consequently, $\Lambda_{\C}^{\LMO}$ can be substantially larger than the
global certificate in \eqref{eq:spd-lambda-certificate} when oracle
directions have little mass in the most anisotropic root spaces.
\end{remark}

\subsection{Consequences for the scaling exponent}
\label{subsec:certificate-exponent}

Combining this section with
Theorem~\ref{thm:exact-jacobi-transfer} separates the scaling margin into
the feasible-set modulus $\Delta_{\C}(r)$ and the ambient distortion factor
$\norm{(D\Log_x)_v^*w}/\norm{w}$. For a power-type modulus, the exponent
$p$ is supplied entirely by the feasible-set geometry. Ambient geometry enters through a multiplicative
conditioning factor: a comparison factor on a general manifold and an exact
root-space factor on $\SPD(n)$. Therefore, on any fixed regular region on
which this factor is uniformly bounded away from zero, ambient curvature
changes the scaling constant but not the power of $d(x,v)$.
Section~\ref{sec:upper-complexity} translates this separation into the RFW
complexity bound.

\section{Geometry-derived Riemannian Frank--Wolfe complexity}
\label{sec:upper-complexity}

The preceding sections separate two geometric ingredients that enter
projection-free convergence analysis. The power $p$ is inherited from the
feasible-set modulus, whereas the ambient manifold enters through the
logarithmic distortion factor. We now show how this separation propagates
to the Riemannian Frank--Wolfe complexity bound.

The analysis is stated for the standard gap-feedback short step. This choice
keeps the argument self-contained and makes explicit how the geometric
scaling certificates of Sections~\ref{sec:power-geometry}--
\ref{sec:certificates} enter the rate. More general RFW convergence results
under local error bounds and scaling assumptions, including open-loop
step-size rules, are developed in \cite{chen2026openloop}. Our purpose here
is different: the scaling inequality will be supplied by the geometry of the
feasible set and the differential of the logarithm map.

Throughout this section, let
\[
    h_t:=f(x_t)-f^\star,
    \qquad
    g_t:=
    -\ip{\nabla f(x_t)}{\Log_{x_t}v_t}_{x_t},
    \qquad
    r_t:=d(x_t,v_t),
\]
where
\[
    v_t\in
    \operatorname{LMO}_{\C}
    \bigl(x_t,-\nabla f(x_t)\bigr)
\]
is an exact Riemannian linear-minimization oracle. For $L>0$, we use the
short step
\begin{equation}
\label{eq:short-step}
    \eta_t
    :=
    \min\left\{
        \frac{g_t}{Lr_t^2},
        1
    \right\},
\end{equation}
whenever $r_t>0$. If $r_t=0$, then $v_t=x_t$ and hence $g_t=0$.
Under geodesic convexity, \eqref{eq:gap-dominates-subopt} then implies
$h_t=0$, so the algorithm may terminate.

\subsection{From an error bound to gradient growth}
\label{subsec:qeb-gradient}

The quadratic error bound supplies the objective-side estimate needed to
combine power-type scaling with the descent inequality.

\begin{lemma}[Error bound implies gradient growth]
\label{lem:qeb-gradient}
Suppose that $f$ is differentiable and geodesically convex on the relevant
feasible geodesics, and let $x\in\C$ satisfy
\[
    d(x,X^\star)
    \leq
    \mu\,h(x)^{1/2}.
\]
If $h(x)>0$, then
\begin{equation}
\label{eq:gradient-lower-qeb}
    \norm{\nabla f(x)}_x
    \geq
    \frac{h(x)^{1/2}}{\mu}.
\end{equation}
\end{lemma}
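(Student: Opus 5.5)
The plan is to combine the first-order inequality for geodesic convexity, applied toward a nearest solution, with the Cauchy--Schwarz inequality. Then I would use the error bound to replace distance by suboptimality.

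First, assume $h(x)>0$, so $x\notin X^\star$ and $d(x,X^\star)>0$. Fix any $x^\star\in X^\star$ such that $\Log_x x^\star$ is defined along a feasible geodesic. Geodesic convexity gives
\[
    f^\star=f(x^\star)\geq f(x)+\ip{\nabla f(x)}{\Log_x x^\star}_x ,
\]
which rearranges to
\[
    h(x)\leq -\ip{\nabla f(x)}{\Log_x x^\star}_x .
\]
By Cauchy--Schwarz and $\norm{\Log_x x^\star}_x=d(x,x^\star)$, this yields
\[
    h(x)\leq \norm{\nabla f(x)}_x\, d(x,x^\star).
\]

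Next, take the infimum over $x^\star\in X^\star$. This gives $h(x)\leq\norm{\nabla f(x)}_x\,d(x,X^\star)$. Inserting the hypothesis $d(x,X^\star)\leq\mu h(x)^{1/2}$ gives $h(x)\leq\mu\norm{\nabla f(x)}_x h(x)^{1/2}$. Dividing by $\mu h(x)^{1/2}>0$ gives \eqref{eq:gradient-lower-qeb}.

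The main point needing care is the infimum step. Each $x^\star$ must be joined to $x$ by a feasible geodesic with a well-defined logarithm, so that the convexity inequality applies. Under the standing unique geodesic convexity of $\C$ this holds for every $x^\star\in X^\star$. The infimum is also harmless without attainment, since the inequality holds for each $x^\star$ separately.
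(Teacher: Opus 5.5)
Your proposal is correct and follows the paper's proof essentially step for step. The chain is the first-order geodesic-convexity inequality toward $x^\star$, then Cauchy--Schwarz with $\norm{\Log_x x^\star}_x=d(x,x^\star)$, then the infimum over $X^\star$ (taken without needing attainment, as the paper also notes), and finally the error bound and division by $h(x)^{1/2}$. Your appeal to unique geodesic convexity of $\C$ says slightly more than the lemma's hypothesis literally states, but it is the same geodesic-availability assumption that the paper invokes implicitly.
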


\begin{proof}
For every $x^\star\in X^\star$ for which the relevant minimizing geodesic
from $x$ to $x^\star$ is available, geodesic convexity gives
\[
    f(x^\star)
    \geq
    f(x)
    +
    \ip{\nabla f(x)}{\Log_xx^\star}_x.
\]
Therefore,
\[
    h(x)
    \leq
    -\ip{\nabla f(x)}{\Log_xx^\star}_x
    \leq
    \norm{\nabla f(x)}_x\,d(x,x^\star).
\]
Taking the infimum over $x^\star\in X^\star$, or an approximating sequence
if the distance is not attained, yields
\[
    h(x)
    \leq
    \norm{\nabla f(x)}_x\,d(x,X^\star).
\]
Using the quadratic error bound,
\[
    h(x)
    \leq
    \mu
    \norm{\nabla f(x)}_x
    h(x)^{1/2}.
\]
Since $h(x)>0$, division by $h(x)^{1/2}$ proves
\eqref{eq:gradient-lower-qeb}.
\end{proof}

Thus, strong geodesic convexity is not required as a primitive assumption.
For instance, whenever $m$-strong geodesic convexity and the corresponding
first-order optimality conditions imply
\[
    h(x)
    \geq
    \frac{m}{2}d(x,X^\star)^2,
\]
the quadratic error bound \eqref{eq:qeb} holds with
$
  \displaystyle  \mu=\sqrt{\frac{2}{m}}.
$

\subsection{Geometry-controlled oracle radius}
\label{subsec:oracle-radius-control}

We first derive the rate from an abstract power-scaling certificate. The
geometric results of the preceding sections will then provide the scaling
constant.

\begin{lemma}[Oracle-radius control]
\label{lem:oracle-radius-control}
Let $p\geq2$ and suppose that, at a nonoptimal iterate $x_t$, the oracle
triple satisfies
\begin{equation}
\label{eq:scaling-upper-section}
    g_t
    \geq
    \beta
    \norm{\nabla f(x_t)}_{x_t}
    D^{1-p}r_t^p
\end{equation}
for some $\beta>0$. If the quadratic error bound \eqref{eq:qeb} holds at
$x_t$, then
\begin{equation}
\label{eq:radius-vs-gap-error}
    r_t^2
    \leq
    D^{2(p-1)/p}
    \left(\frac{\mu}{\beta}\right)^{2/p}
    g_t^{2/p}h_t^{-1/p}.
\end{equation}
\end{lemma}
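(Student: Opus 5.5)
The plan is to combine the scaling inequality \eqref{eq:scaling-upper-section} with the gradient lower bound from Lemma~\ref{lem:qeb-gradient}, and then solve for $r_t$. Since $x_t$ is nonoptimal, $h_t>0$, so Lemma~\ref{lem:qeb-gradient} applies at $x_t$. Combined with the quadratic error bound \eqref{eq:qeb}, it gives
\[
    \norm{\nabla f(x_t)}_{x_t}\geq \frac{h_t^{1/2}}{\mu}.
\]
This step needs the geodesic convexity hypotheses of that lemma, which I take to be in force as standing assumptions of the section.

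Substituting into \eqref{eq:scaling-upper-section} yields
\[
    g_t\geq \frac{\beta}{\mu}\,h_t^{1/2}D^{1-p}r_t^p.
\]
Solving for $r_t^p$ gives
\[
    r_t^p\leq \frac{\mu}{\beta}\,D^{p-1}g_t\,h_t^{-1/2}.
\]
Both sides are nonnegative, and $t\mapsto t^{2/p}$ is increasing on $[0,\infty)$. Raising to the power $2/p$ therefore gives
\[
    r_t^2\leq \left(\frac{\mu}{\beta}\right)^{2/p}D^{2(p-1)/p}g_t^{2/p}h_t^{-1/p},
\]
which is exactly \eqref{eq:radius-vs-gap-error}.

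There is no real obstacle here. The only points to check are the following.
\begin{itemize}
    \item $h_t>0$, so the negative power $h_t^{-1/p}$ is finite and division by $h_t^{1/2}$ is legitimate.
    \item The case $r_t=0$ is trivial.
    \item The exponents combine correctly: $(D^{p-1})^{2/p}=D^{2(p-1)/p}$ and $(h_t^{-1/2})^{2/p}=h_t^{-1/p}$.
\end{itemize}
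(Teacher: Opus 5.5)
Your proposal is correct and follows the paper's proof: both use Lemma~\ref{lem:qeb-gradient} to get $\norm{\nabla f(x_t)}_{x_t}\geq h_t^{1/2}/\mu$, substitute it into the scaling inequality, solve for $r_t^p$, and raise to the power $2/p$. You are also right that the geodesic convexity needed for Lemma~\ref{lem:qeb-gradient} is a standing assumption the paper's proof relies on implicitly.
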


\begin{proof}
Since $h_t>0$, Lemma~\ref{lem:qeb-gradient} gives
\[
    \norm{\nabla f(x_t)}_{x_t}
    \geq
    \frac{h_t^{1/2}}{\mu}.
\]
Rearranging \eqref{eq:scaling-upper-section},
\[
    r_t^p
    \leq
    \frac{
        D^{p-1}g_t
    }{
        \beta\norm{\nabla f(x_t)}_{x_t}
    }
    \leq
    \frac{\mu D^{p-1}}{\beta}
    g_t h_t^{-1/2}.
\]
Raising both sides to the power $2/p$ yields
\eqref{eq:radius-vs-gap-error}.
\end{proof}

The inequality~\eqref{eq:radius-vs-gap-error} makes the separation explicit:
the power $p$ determines how the oracle radius depends on the gap, whereas
the geometric conditioning enters through the scalar $\beta$.

\subsection{A self-contained short-step rate}
\label{subsec:short-step-rate}

We next record the algorithmic consequence of power scaling and the
quadratic error bound.

\begin{theorem}[RFW under power scaling and a quadratic error bound]
\label{thm:abstract-upper-rate}
Let $p\geq2$ and $L>0$. Suppose that $f$ is geodesically convex and
$L$-smooth along the feasible minimizing geodesics used by the algorithm,
that an exact LMO exists at every nonoptimal iterate, and that the iterates
satisfy the quadratic error bound \eqref{eq:qeb} with constant $\mu>0$.
Assume further that every nonoptimal oracle triple along the trajectory
satisfies
\begin{equation}
\label{eq:abstract-scaling-rate}
    g_t
    \geq
    \beta
    \norm{\nabla f(x_t)}_{x_t}
    D^{1-p}r_t^p
\end{equation}
for a uniform constant $\beta>0$.
Then, the RFW iterates generated by \eqref{eq:rfw-update} with the short step
\eqref{eq:short-step} satisfy
\begin{equation}
\label{eq:two-branch-recursion}
    h_{t+1}
    \leq
    h_t
    -
    \min\left\{
        \frac{h_t}{2},
        c_p h_t^{\,2-1/p}
    \right\},
\end{equation}
where
\begin{equation}
\label{eq:cp-upper}
    c_p
    :=
    \frac{
        \beta^{2/p}
    }{
        2L\,D^{2(p-1)/p}\mu^{2/p}
    }.
\end{equation}

Consequently, either the method reaches an optimum in finitely many
iterations, or there exists a finite index $t_0$ such that, for every
$t\geq t_0$,
\begin{equation}
\label{eq:explicit-upper-rate}
    h_t
    \leq
    \left[
        h_{t_0}^{-(p-1)/p}
        +
        \frac{p-1}{p}
        c_p(t-t_0)
    \right]^{-p/(p-1)}.
\end{equation}
In particular,
\begin{equation}
\label{eq:upper-rate-p}
    h_t
    =
    O\!\left(t^{-p/(p-1)}\right).
\end{equation}
\end{theorem}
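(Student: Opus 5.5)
The proof has two parts. First we derive the one-step recursion \eqref{eq:two-branch-recursion} by splitting on which branch of the short step \eqref{eq:short-step} is active. Then we solve that recursion.

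\emph{One-step recursion.} Fix a nonoptimal iterate, so $h_t>0$. By \eqref{eq:gap-dominates-subopt} we have $g_t\geq h_t>0$, hence $r_t>0$. Start from the descent estimate \eqref{eq:master-descent-prelim}, $h_{t+1}\leq h_t-\eta_t g_t+\tfrac L2\eta_t^2r_t^2$.

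\emph{Case $\eta_t=1$.} Then $g_t\geq Lr_t^2$, so $\tfrac L2 r_t^2\leq \tfrac12 g_t$. This gives $h_{t+1}\leq h_t-\tfrac12 g_t\leq h_t-\tfrac12 h_t$.

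\emph{Case $\eta_t=g_t/(Lr_t^2)<1$.} Substitution gives $h_{t+1}\leq h_t-g_t^2/(2Lr_t^2)$. Lemma~\ref{lem:oracle-radius-control} applies, since its hypotheses are exactly \eqref{eq:abstract-scaling-rate} and the error bound at $x_t$. It bounds $r_t^2$ by $D^{2(p-1)/p}(\mu/\beta)^{2/p}g_t^{2/p}h_t^{-1/p}$. Therefore
\[
\frac{g_t^2}{2Lr_t^2}\geq c_p\, g_t^{2-2/p}h_t^{1/p}\geq c_p\, h_t^{2-1/p},
\]
using $g_t\geq h_t$ and $2-2/p\geq 0$.

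Combining the two cases yields \eqref{eq:two-branch-recursion}. Note that $h_{t+1}\leq h_t$ always, so the sequence is monotone.

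\emph{Solving the recursion.} If some $h_t=0$, the method has stopped at an optimum. Otherwise $h_t>0$ for all $t$ and decreases. The first branch gives geometric decay as long as it is the active minimum, i.e.\ while $h_t/2< c_p h_t^{2-1/p}$, which means $h_t^{(p-1)/p}>1/(2c_p)$. Geometric decay cannot persist indefinitely above this threshold, so there is a finite $t_0$ with $h_{t_0}^{(p-1)/p}\leq 1/(2c_p)$. By monotonicity this persists for all $t\geq t_0$, and from then on $h_{t+1}\leq h_t-c_ph_t^{2-1/p}$.

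Set $s=(p-1)/p\in(0,1)$. The standard argument bounds the increment of $h_t^{-s}$. By the mean value theorem (or convexity of $u\mapsto u^{-s}$),
\[
h_{t+1}^{-s}-h_t^{-s}\geq s\,h_t^{-s-1}(h_t-h_{t+1})\geq s\,c_p\,h_t^{-s-1}h_t^{2-1/p}=s\,c_p,
\]
since $-s-1+2-1/p=0$. Summing from $t_0$ to $t-1$ gives $h_t^{-s}\geq h_{t_0}^{-s}+s c_p(t-t_0)$, which is exactly \eqref{eq:explicit-upper-rate}. The $O(t^{-p/(p-1)})$ rate follows.

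\emph{Main obstacle.} The step needing the most care is the small-step branch. There we must use $g_t\geq h_t$ in the correct direction on the exponent $2-2/p$. We must also ensure that the lemma's hypothesis $h_t>0$ and the error bound hold along the whole trajectory. Both are supplied by the theorem's assumptions, so the argument is routine once that bookkeeping is checked.
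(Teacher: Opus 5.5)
Your proposal is correct and follows the paper's proof essentially step for step: the same two-branch split of the short step, the same use of Lemma~\ref{lem:oracle-radius-control} together with $g_t\geq h_t$ on the interior branch, and the same threshold-entry argument. The one difference is cosmetic. In the telescoping step you use the tangent-line (convexity) bound for $u\mapsto u^{-s}$, whereas the paper writes $h_{t+1}\leq h_t(1-c_ph_t^{a})$ and applies $(1-z)^{-a}\geq 1+az$; the two are equivalent.
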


\begin{proof}
If $h_t=0$, there is nothing to prove. Assume therefore that $h_t>0$.
The smoothness inequality \eqref{eq:master-descent-prelim} gives
\begin{equation}
\label{eq:descent-upper-proof}
    h_{t+1}
    \leq
    h_t
    -
    \eta_tg_t
    +
    \frac{L}{2}\eta_t^2r_t^2.
\end{equation}
Consider first the full-step branch
\[
    g_t\geq Lr_t^2.
\]
Then, $\eta_t=1$, and
\[
    h_{t+1}
    \leq
    h_t-g_t+\frac{L}{2}r_t^2
    \leq
    h_t-\frac{g_t}{2}.
\]
Since geodesic convexity gives $g_t\geq h_t$,
\begin{equation}
\label{eq:full-step-half}
    h_{t+1}
    \leq
    \frac{h_t}{2}.
\end{equation}
On the interior branch, 
\[
    g_t<Lr_t^2,
\]
we have
\[
    \eta_t=\frac{g_t}{Lr_t^2},
\]
and \eqref{eq:descent-upper-proof} becomes
\begin{equation}
\label{eq:interior-decrease}
    h_{t+1}
    \leq
    h_t-\frac{g_t^2}{2Lr_t^2}.
\end{equation}
Lemma~\ref{lem:oracle-radius-control} implies
\[
    \frac{g_t^2}{2Lr_t^2}
    \geq
    c_p
    g_t^{\,2-2/p}
    h_t^{1/p}.
\]
Since $g_t\geq h_t$ and $2-2/p\geq0$,
\begin{equation}
\label{eq:power-step-decrease}
    \frac{g_t^2}{2Lr_t^2}
    \geq
    c_p h_t^{\,2-1/p}.
\end{equation}
Combining \eqref{eq:full-step-half} and
\eqref{eq:power-step-decrease} proves
\eqref{eq:two-branch-recursion}.
Set
\[
    a:=\frac{p-1}{p}\in(0,1),
\]
so that
\[
    2-\frac1p=1+a.
\]
Then, \eqref{eq:two-branch-recursion} implies
\begin{equation}
\label{eq:two-branch-a}
    h_{t+1}
    \leq
    h_t-
    \min\left\{
        \frac{h_t}{2},
        c_ph_t^{1+a}
    \right\}.
\end{equation}
Consider the threshold
\[
    \bar h
    :=
    \left(\frac{1}{2c_p}\right)^{1/a}.
\]
Whenever $h_t>\bar h$, one has
$c_ph_t^a>1/2$, and therefore
\eqref{eq:two-branch-a} gives
\[
    h_{t+1}\leq\frac{h_t}{2}.
\]
Hence, unless finite termination occurs earlier, the sequence enters the
region
\[
    h_t\leq\bar h
\]
after finitely many iterations. Let $t_0$ be an index from which this holds.
Since \eqref{eq:two-branch-a} also implies monotonicity of $h_t$, the same
inequality remains valid for all $t\geq t_0$.
For such $t$,
\[
    c_ph_t^a\leq\frac12,
\]
so the power term is the active term in
\eqref{eq:two-branch-a} and
\[
    h_{t+1}
    \leq
    h_t\bigl(1-c_ph_t^a\bigr).
\]
If $h_{t+1}=0$, finite termination has occurred. Otherwise, using
\[
    (1-z)^{-a}\geq1+az,
    \qquad 0\leq z<1,
\]
with $z=c_ph_t^a$, we obtain
\[
    h_{t+1}^{-a}
    \geq
    h_t^{-a}
    +
    ac_p.
\]
Summing from $t_0$ to $t-1$ gives
\[
    h_t^{-a}
    \geq
    h_{t_0}^{-a}
    +
    ac_p(t-t_0).
\]
Since $a=(p-1)/p$, inversion yields
\eqref{eq:explicit-upper-rate}, and hence
\eqref{eq:upper-rate-p}.
\end{proof}

\begin{remark}[Local error bounds]
\label{rem:local-qeb-rate}
Theorem~\ref{thm:abstract-upper-rate} is formulated in terms of conditions
holding along the trajectory so that the local accelerated regime is not
conflated with an entry argument. If the quadratic error bound and the
uniform scaling certificate hold only on a neighborhood $U$ of $X^\star$,
the same rate applies from any index $t_{\mathrm{ent}}$ after which the
iterates remain in $U$. Establishing such entry and invariance is a separate
question and is not assumed implicitly in the theorem.
\end{remark}

\subsection{Rate certified by Riemannian feasible-set geometry}
\label{subsec:geometry-derived-rate}

We now replace the abstract scaling assumption by the geometric certificate
obtained from the exact Jacobi transfer.

\begin{theorem}[Geometry-derived RFW complexity]
\label{thm:geometry-upper-rate}
Let $p\geq2$ and let
\[
    \C\in\DGUC_p(\alpha)
\]
have finite positive diameter $D$. Suppose that every nonoptimal oracle
configuration
\[
    (x_t,-\nabla f(x_t),v_t)
\]
lies in the regularity regime of
Theorem~\ref{thm:exact-jacobi-transfer} and satisfies the uniform
directional distortion bound
\begin{equation}
\label{eq:lambda-positive-upper}
    \frac{
        \norm{
            (D\Log_{x_t})_{v_t}^*
            \nabla f(x_t)
        }_{v_t}
    }{
        \norm{\nabla f(x_t)}_{x_t}
    }
    \geq
    \Lambda
    >0.
\end{equation}
Let $f$ be geodesically convex and $L$-smooth, with $L>0$, along the
feasible minimizing geodesics used by RFW, and suppose that the quadratic
error bound \eqref{eq:qeb} holds along the trajectory with constant
$\mu>0$.
Then, RFW with the short step \eqref{eq:short-step} satisfies
\begin{equation}
\label{eq:geometry-upper-rate}
    f(x_t)-f^\star
    =
    O\!\left(t^{-p/(p-1)}\right).
\end{equation}
More precisely, Theorem~\ref{thm:abstract-upper-rate} applies with
\begin{equation}
\label{eq:beta-geometry-upper}
    \beta=\alpha\Lambda,
\end{equation}
and hence
\begin{equation}
\label{eq:geometry-cp}
    c_p
    =
    \frac{
        (\alpha\Lambda)^{2/p}
    }{
        2L\,D^{2(p-1)/p}\mu^{2/p}
    }.
\end{equation}
\end{theorem}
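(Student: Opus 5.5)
The plan is to reduce the statement to Theorem~\ref{thm:abstract-upper-rate} by checking its scaling hypothesis \eqref{eq:abstract-scaling-rate} with $\beta=\alpha\Lambda$. The only work is to obtain this scaling inequality, at each nonoptimal iterate, from Theorem~\ref{thm:exact-jacobi-transfer}, applied with the oracle direction $w=-\nabla f(x_t)$.

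\textbf{Step 1: identify the oracle.} Fix a nonoptimal index $t$, so $h_t>0$. Then $\nabla f(x_t)\neq0$: otherwise geodesic convexity would force $h_t\le0$. Set $w:=-\nabla f(x_t)\neq0$. By \eqref{eq:lmo-general}--\eqref{eq:lmo-gradient}, the point $v_t$ is a maximizer of $z\mapsto\ip{w}{\Log_{x_t}z}_{x_t}$ over $\C$. Hence $(x_t,w,v_t)$ is an admissible oracle configuration. It lies in the regularity regime by hypothesis, and $\C\in\DGUC_p(\alpha)$ is uniquely geodesically convex.

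\textbf{Step 2: transfer and insert the distortion bound.} Theorem~\ref{thm:exact-jacobi-transfer}, through \eqref{eq:exact-power-transfer}, gives
\[
 g_t=\ip{w}{\Log_{x_t}v_t}_{x_t}\ \ge\ \alpha D^{1-p}r_t^p\,\norm{(D\Log_{x_t})_{v_t}^*w}_{v_t}.
\]
The adjoint is linear, so $\norm{(D\Log_{x_t})_{v_t}^*w}=\norm{(D\Log_{x_t})_{v_t}^*\nabla f(x_t)}$. By \eqref{eq:lambda-positive-upper}, this is at least $\Lambda\norm{\nabla f(x_t)}_{x_t}$. Therefore
\[
 g_t\ \ge\ \alpha\Lambda\,\norm{\nabla f(x_t)}_{x_t}D^{1-p}r_t^p,
\]
which is exactly \eqref{eq:abstract-scaling-rate} with the uniform constant $\beta=\alpha\Lambda>0$. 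If $r_t=0$, the algorithm terminates, as noted after \eqref{eq:short-step}, so that case causes no trouble.

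\textbf{Step 3: conclude.} The remaining hypotheses of Theorem~\ref{thm:abstract-upper-rate} are assumed verbatim: geodesic convexity, $L$-smoothness along the feasible geodesics, existence of an exact LMO, and the quadratic error bound along the trajectory. Applying that theorem gives the recursion \eqref{eq:two-branch-recursion} with $c_p$ from \eqref{eq:cp-upper}. Substituting $\beta=\alpha\Lambda$ yields \eqref{eq:geometry-cp}, and the explicit bound \eqref{eq:explicit-upper-rate} gives \eqref{eq:geometry-upper-rate}. This also covers the finite-termination alternative, in which $h_t=0$ eventually.

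\textbf{Main obstacle.} The only delicate point is bookkeeping rather than analysis. The transfer theorem must be applied pointwise to the actually selected $v_t$, since the LMO may have several solutions (Remark~\ref{rem:local-oracle-transfer}). Moreover, hypothesis \eqref{eq:lambda-positive-upper} is stated for $\nabla f$ rather than for $w$. The sign flip is harmless by linearity of the adjoint, and the pointwise issue is handled because the theorem holds for every regular maximizer.
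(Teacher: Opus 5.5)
Your proposal is correct and matches the paper's proof: apply Theorem~\ref{thm:exact-jacobi-transfer} with $w=-\nabla f(x_t)$, insert the distortion bound \eqref{eq:lambda-positive-upper} to obtain the scaling hypothesis of Theorem~\ref{thm:abstract-upper-rate} with $\beta=\alpha\Lambda$, and invoke that theorem. Your additional bookkeeping makes explicit details the paper leaves implicit, namely that the gradient is nonzero at nonoptimal iterates, that the sign flip leaves the adjoint norm unchanged, and that the case $r_t=0$ is harmless.
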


\begin{proof}
Apply Theorem~\ref{thm:exact-jacobi-transfer} with
\[
    w=-\nabla f(x_t).
\]
For every nonoptimal iterate,
\[
    g_t
    \geq
    \alpha
    D^{1-p}r_t^p
    \norm{
        (D\Log_{x_t})_{v_t}^*
        \nabla f(x_t)
    }_{v_t}.
\]
Using \eqref{eq:lambda-positive-upper},
\[
    g_t
    \geq
    \alpha\Lambda
    \norm{\nabla f(x_t)}_{x_t}
    D^{1-p}r_t^p.
\]
Thus, the abstract scaling condition holds with
$\beta=\alpha\Lambda$, and the conclusion follows from
Theorem~\ref{thm:abstract-upper-rate}.
\end{proof}

\begin{remark}[Relation with the LMO-restricted distortion]
\label{rem:trajectory-lambda}
A sufficient condition for \eqref{eq:lambda-positive-upper} is
\[
    \Lambda_{\C}^{\LMO}\geq\Lambda>0.
\]
The trajectory-wise formulation in
\eqref{eq:lambda-positive-upper} is slightly sharper: the complexity theorem
requires a lower distortion bound only for oracle configurations actually
encountered by the algorithm, rather than uniformly over all possible LMO
configurations in $\C$.
\end{remark}

Theorem~\ref{thm:geometry-upper-rate} completes the passage from
feasible-set geometry to algorithmic complexity. The Frank--Wolfe scaling
margin is determined by the intrinsic feasible-set parameter $\alpha$ and
the logarithmic distortion encountered along the oracle trajectory. Thus
the convergence rate follows directly from the geometric quantities
identified by the Jacobi transfer principle.

\begin{corollary}[Curvature-certified rate]
\label{cor:rauch-upper-rate}
Under the objective-side and feasible-set assumptions of
Theorem~\ref{thm:geometry-upper-rate}, suppose that the relevant oracle
geodesics lie in a regular normal region satisfying
\[
    \Sec\geq-\kappa^2,
    \qquad
    \kappa\geq0.
\]
Then, Theorem~\ref{thm:abstract-upper-rate} applies with
\begin{equation}
\label{eq:rauch-beta-upper}
    \beta
    =
    \alpha
    \frac{\kappa D}{\sinh(\kappa D)},
\end{equation}
with the continuous interpretation $\beta=\alpha$ when $\kappa=0$.
Consequently,
\[
    f(x_t)-f^\star
    =
    O\!\left(t^{-p/(p-1)}\right).
\]
\end{corollary}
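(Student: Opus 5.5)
The plan is to reduce the corollary to Theorem~\ref{thm:geometry-upper-rate} by exhibiting the uniform directional distortion bound \eqref{eq:lambda-positive-upper} with
\[
    \Lambda=\frac{\kappa D}{\sinh(\kappa D)} .
\]
The objective-side hypotheses (geodesic convexity, $L$-smoothness along the RFW geodesics, the quadratic error bound with constant $\mu$, existence of an exact LMO) and the feasible-set hypothesis $\C\in\DGUC_p(\alpha)$ are inherited verbatim. Hence only the distortion factor has to be certified from the curvature assumption.

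\textbf{Step 1: pointwise Rauch bound.} Fix a nonoptimal iterate $x_t$ with oracle point $v_t\neq x_t$, and set $r_t=d(x_t,v_t)$. Since the oracle geodesic lies in a regular normal region with $\Sec\geq-\kappa^2$, Corollary~\ref{cor:negative-curvature-certificate} (i.e.\ Proposition~\ref{prop:rauch-certificate} with $K_-=-\kappa^2$) gives
\[
    \sigma_{\min}\bigl((D\Log_{x_t})_{v_t}\bigr)\geq \frac{\kappa r_t}{\sinh(\kappa r_t)} .
\]

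\textbf{Step 2: pass to the adjoint.} As in the proof of Corollary~\ref{cor:singular-value-scaling}, the metric adjoint has the same singular values, so
\[
    \norm{(D\Log_{x_t})_{v_t}^*\nabla f(x_t)}_{v_t}\geq \frac{\kappa r_t}{\sinh(\kappa r_t)}\,\norm{\nabla f(x_t)}_{x_t}.
\]
Because $x_t,v_t\in\C$, we have $r_t\leq D$. The map $s\mapsto s/\sinh s$ is decreasing on $[0,\infty)$, so the factor is bounded below by $\kappa D/\sinh(\kappa D)>0$, with value $1$ when $\kappa=0$. This is exactly \eqref{eq:lambda-positive-upper}, and applying it with $w=-\nabla f(x_t)$ or $w=\nabla f(x_t)$ makes no difference since norms are involved.

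\textbf{Step 3: apply the theorem.} Theorem~\ref{thm:geometry-upper-rate} now yields Theorem~\ref{thm:abstract-upper-rate} with $\beta=\alpha\Lambda$, which is \eqref{eq:rauch-beta-upper}, together with the $O(t^{-p/(p-1)})$ rate.

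The only delicate point is checking that the hypotheses of Proposition~\ref{prop:rauch-certificate} hold at every oracle pair: unique minimizing geodesic, $v_t$ regular for $\Exp_{x_t}$, and differentiability of $\Log_{x_t}$ at $v_t$. I would read these off the assumption that the oracle geodesics lie in a regular normal region, which is the same regularity regime Theorem~\ref{thm:geometry-upper-rate} already demands. Note that no positive-curvature conjugate-radius restriction arises, because $K_-=-\kappa^2\leq0$.
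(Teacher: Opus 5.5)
Your proposal is correct and matches the paper's argument. The paper likewise uses $d(x_t,v_t)\le D$ and Corollary~\ref{cor:negative-curvature-certificate} to get $\sigma_{\min}\bigl((D\Log_{x_t})_{v_t}\bigr)\ge \kappa D/\sinh(\kappa D)$, combines this with the exact transfer theorem, and invokes Theorem~\ref{thm:abstract-upper-rate}; you package those last two steps through Theorem~\ref{thm:geometry-upper-rate} with $\Lambda=\kappa D/\sinh(\kappa D)$, which is the same thing, and your explicit adjoint step and pointwise use of $r_t\le D$ only spell out what the paper leaves implicit.
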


\begin{proof}
Since $x_t,v_t\in\C$,
\[
    d(x_t,v_t)\leq D.
\]
Corollary~\ref{cor:negative-curvature-certificate} gives
\[
    \sigma_{\min}
    \!\left(
        (D\Log_{x_t})_{v_t}
    \right)
    \geq
    \frac{\kappa D}{\sinh(\kappa D)}.
\]
Combining this estimate with the exact transfer theorem gives the abstract
scaling inequality with \eqref{eq:rauch-beta-upper}. The result follows from
Theorem~\ref{thm:abstract-upper-rate}.
\end{proof}

\begin{corollary}[SPD spectral rate]
\label{cor:spd-upper-rate}
Let
\[
    \C\subset\SPD(n),
    \qquad
    \C\in\DGUC_p(\alpha),
\]
and suppose that
\[
    \Delta_{\mathrm{spec}}(\C)<\infty.
\]
Under the objective-side assumptions of
Theorem~\ref{thm:geometry-upper-rate}, RFW with the short step
\eqref{eq:short-step} satisfies
\[
    f(x_t)-f^\star
    =
    O\!\left(t^{-p/(p-1)}\right),
\]
with the abstract scaling constant
\begin{equation}
\label{eq:spd-beta-upper}
    \beta
    =
    \alpha
    \phi\!\left(
        \frac{\Delta_{\mathrm{spec}}(\C)}{2}
    \right).
\end{equation}
\end{corollary}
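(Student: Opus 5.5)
The plan is to reduce Corollary~\ref{cor:spd-upper-rate} to Theorem~\ref{thm:abstract-upper-rate} by producing the abstract scaling inequality \eqref{eq:abstract-scaling-rate} with $\beta=\alpha\phi(\Delta_{\mathrm{spec}}(\C)/2)$, in the same way as the proof of Corollary~\ref{cor:rauch-upper-rate}. There are three steps: check the regularity needed by the transfer theorem, apply the exact transfer at each nonoptimal iterate, and replace the directional distortion by the spectral certificate.

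\emph{Regularity.} $\SPD(n)$ with the affine-invariant metric is a Hadamard manifold. Hence for every pair $X,V\in\C$ the logarithm \eqref{eq:spd-log} is globally single valued and smooth, and $\Exp_X$ is a diffeomorphism. So every nonoptimal oracle configuration $(x_t,-\nabla f(x_t),v_t)$ lies in the regularity regime of Theorem~\ref{thm:exact-jacobi-transfer}; no local normal-neighborhood hypothesis is needed. Unique geodesic convexity of $\C$ is part of the definition of $\DGUC_p(\alpha)$.

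\emph{Transfer.} Applying Theorem~\ref{thm:exact-jacobi-transfer} with $w=-\nabla f(x_t)$ gives
\[
g_t\geq \alpha D^{1-p}r_t^p\,\norm{(D\Log_{x_t})_{v_t}^*\nabla f(x_t)}_{v_t}.
\]

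\emph{Spectral certificate.} Since $x_t,v_t\in\C$, Theorem~\ref{thm:spd-exact-distortion} gives
\[
\sigma_{\min}\bigl((D\Log_{x_t})_{v_t}\bigr)=\phi\bigl(\tfrac12\log\cond(x_t^{-1/2}v_tx_t^{-1/2})\bigr)\geq\phi\bigl(\Delta_{\mathrm{spec}}(\C)/2\bigr),
\]
using that $\phi$ is decreasing and the definition of the spectral diameter. Finiteness of $\Delta_{\mathrm{spec}}(\C)$ ensures this constant is strictly positive. An operator and its adjoint have the same singular values, so the adjoint norm is bounded below by $\phi(\Delta_{\mathrm{spec}}(\C)/2)\norm{\nabla f(x_t)}_{x_t}$. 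This is exactly \eqref{eq:lambda-positive-upper} with $\Lambda=\phi(\Delta_{\mathrm{spec}}(\C)/2)$; equivalently, it is Corollary~\ref{cor:spd-scaling-certificate}.

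Theorem~\ref{thm:geometry-upper-rate} (or directly Theorem~\ref{thm:abstract-upper-rate}) then gives the short-step rate $O(t^{-p/(p-1)})$ with constant \eqref{eq:spd-beta-upper} and $c_p$ as in \eqref{eq:geometry-cp}. Terminal iterates with $r_t=0$ are handled by the finite-termination remark following \eqref{eq:short-step}.

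I do not expect a real obstacle, since the argument is a composition of earlier results. The only point to check is that the global regularity of $\Log$ on $\SPD(n)$ justifies invoking the transfer theorem without further local assumptions, and that the bound holds uniformly along the trajectory because all iterates and oracle points remain in $\C$.
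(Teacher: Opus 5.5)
Your proposal is correct and follows essentially the same route as the paper. The paper's proof uses the spectral certificate (Corollary~\ref{cor:spd-scaling-certificate}, i.e., Theorem~\ref{thm:spd-exact-distortion} plus monotonicity of $\phi$) to get $\sigma_{\min}((D\Log_X)_V)\geq\phi(\Delta_{\mathrm{spec}}(\C)/2)$, combines this with the exact transfer theorem to obtain the abstract scaling inequality with $\beta=\alpha\phi(\Delta_{\mathrm{spec}}(\C)/2)$, and then applies Theorem~\ref{thm:abstract-upper-rate}. Your Hadamard-regularity and adjoint-singular-value checks simply make explicit what the paper leaves implicit.
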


\begin{proof}
Corollary~\ref{cor:spd-scaling-certificate} gives
\[
    \sigma_{\min}\!\bigl((D\Log_X)_V\bigr)
    \geq
    \phi\!\left(
        \frac{\Delta_{\mathrm{spec}}(\C)}{2}
    \right)
\]
for every relevant pair $X,V\in\C$. The exact transfer theorem therefore
gives the abstract power-scaling inequality with
\eqref{eq:spd-beta-upper}. Apply
Theorem~\ref{thm:abstract-upper-rate}.
\end{proof}

\subsection{Interpretation and relation to existing fast-rate analyses}
\label{subsec:upper-interpretation}

The rate
$
    O\!\left(t^{-p/(p-1)}\right)
$
inherits the separation established by the geometric transfer theorem.
The feasible-set modulus determines the power $p$ and hence the complexity
exponent $p/(p-1)$, while the ambient manifold contributes through the
logarithmic distortion. In Theorem~\ref{thm:abstract-upper-rate} this
distortion is represented by $\Lambda$; under sectional-curvature control
it is quantified by a Rauch comparison factor, and on $\SPD(n)$ by an
exact spectral factor. Consequently, whenever the relevant logarithmic
distortion remains uniformly positive along the oracle trajectory, the
ambient geometry determines the conditioning of the rate while the
feasible-set power determines its exponent.

Theorem~\ref{thm:abstract-upper-rate} provides the scaling-to-complexity
component of this framework. It converts a power-type Frank--Wolfe scaling
margin into the corresponding short-step complexity exponent, while the
preceding geometric results supply that margin from the feasible-set
modulus and the logarithmic differential. This complements analyses that
begin directly from error-bound and scaling assumptions, including the
local H\"olderian error bounds, directional scaling conditions, open-loop
rules, and gap-feedback policies studied by
Chen~\cite{chen2026openloop}. Here, the scaling law is generated
geometrically, so its power and conditioning retain an explicit
interpretation in terms of feasible-set geometry and ambient differential
distortion.

For $p=2$, the complexity bound specializes to $O(t^{-2})$. As $p$
increases, the exponent $p/(p-1)$ decreases to $1$, continuously
interpolating between the quadratic-set regime and the classical
$O(t^{-1})$ Frank--Wolfe scale. This agrees with the Euclidean
power-uniformly-convex frontier
\cite{kerdreux2026local,halbey2026curvature}. The sharpness result of
Section~\ref{sec:lower-bound} further shows that, for every fixed $p>3$,
the exponent $p/(p-1)$ is attained on a genuinely negatively curved
hyperbolic family under the same gap-feedback short-step policy.

\section{Sharpness on a hyperbolic hard family}
\label{sec:lower-bound}

The upper bound of Section~\ref{sec:upper-complexity} is sharp for the
gap-feedback Riemannian Frank--Wolfe policy on a genuinely curved manifold.
For every fixed $p>3$, we construct a compact feasible set in a hyperbolic
plane of fixed negative curvature for which the same short-step rule as in
Theorem~\ref{thm:geometry-upper-rate} generates trajectories satisfying
\[
    f(x_t)-f^\star
    =
    \Theta\!\left(t^{-p/(p-1)}\right).
\]
Thus, the geometry-derived exponent $p/(p-1)$ persists under nonzero ambient
curvature and is attained by the same policy for which the upper bound was
derived.

The construction combines three geometric and dynamical ingredients. A
Euclidean power-$p$ contact set is first embedded in a sufficiently small
Klein chart, where it retains a uniform double-geodesic power modulus.
After a weighted blow-up, the resulting Frank--Wolfe dynamics form a
$C^1$ perturbation of the corresponding flat slow dynamics. Finally, the
squared-distance objective centered at the boundary optimizer converts the
longitudinal slow variable into a quadratic objective gap. For $p>3$, the
strict contraction of the flat rescaled dynamics persists under the
hyperbolic perturbation, yielding the matching polynomial exponent.

Exact line search serves as an auxiliary reference dynamics for identifying
the limiting coefficients of the flat model. The sharpness result itself
is established for the gap-feedback short-step rule analyzed in
Section~\ref{sec:upper-complexity}. Consequently, the construction
establishes sharpness of the exponent for this policy over the
geometry-controlled class considered here.

\subsection{A genuinely hyperbolic power-contact family}
\label{subsec:hyperbolic-family}

Fix $\kappa>0$ and let $\mathbb H^2_{-\kappa^2}$ denote the complete simply
connected surface of constant sectional curvature $-\kappa^2$. We use the
Klein model
\[
    \mathbb K_\kappa^2
    :=
    \left\{
        z\in\mathbb R^2:
        \kappa\|z\|_2<1
    \right\}.
\]
Hyperbolic geodesics are represented by Euclidean chords in these
coordinates. Hence, every Euclidean convex set compactly contained in
$\mathbb K_\kappa^2$ is geodesically convex.
For sufficiently small $\rho>0$, set
\[
    c:=(-\rho,0)
\]
and define
\begin{equation}
\label{eq:klein-p-domain}
    \C_{p,\rho}
    :=
    c
    +
    \rho
    \left\{
        (z_1,z_2)\in\mathbb R^2:
        |z_1|^p+|z_2|^p\leq1
    \right\}
    \Subset
    \mathbb K_\kappa^2.
\end{equation}
Its rightmost boundary point is
\[
    x^\star
    =
    c+\rho(1,0)
    =
    0.
\]
This placement is useful for two reasons. The feasible boundary remains an
exact translated Euclidean $\ell_p$ boundary in Klein coordinates, while
the optimizer lies at the center of the model. At the origin,
\begin{equation}
\label{eq:normal-metric-expansion}
    g_{ij}(0)=\delta_{ij},
    \qquad
    \partial_k g_{ij}(0)=0,
    \qquad
    g_{ij}(z)
    =
    \delta_{ij}
    +
    O(\kappa^2\|z\|_2^2).
\end{equation}
Thus, the ambient metric has no linear jet at the contact point.

\subsubsection{An explicit Euclidean power margin}

We first isolate the Euclidean geometric input. This also makes the
membership of the hyperbolic hard family in $\DGUC_p$ independent of any
implicit uniform-convexity constant.
Let
\[
    B_p
    :=
    \left\{
        z\in\mathbb R^2:
        \|z\|_p\leq1
    \right\},
    \qquad p\geq2,
\]
and let
\[
    D_0
    :=
    \diam_2(B_p)
    =
    2^{3/2-1/p}.
\]

\begin{lemma}[Explicit Euclidean power margin]
\label{lem:explicit-euclidean-power-margin}
For every $p\geq2$, define
\begin{equation}
\label{eq:explicit-euclidean-delta}
    \delta_p^{\mathrm E}
    :=
    \frac{2^{1/p-1/2}}{p}.
\end{equation}
Then, for every $a,b\in B_p$, every $t\in[0,1]$, and
\[
    m_t:=(1-t)a+tb,
\]
one has
\begin{equation}
\label{eq:euclidean-power-ball}
    B_2\!\left(
        m_t,\,
        \delta_p^{\mathrm E}
        t(1-t)
        D_0^{1-p}
        \|a-b\|_2^p
    \right)
    \subseteq
    B_p.
\end{equation}
\end{lemma}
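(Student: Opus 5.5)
The plan is to prove \eqref{eq:euclidean-power-ball} by a norm-defect argument. We bound $\|m_t\|_p$ strictly below $1$ by a quantity of order $t(1-t)\|a-b\|^p$. Then we use that a Euclidean ball of radius $s$ around a point $m$ is contained in the $\ell_p$ ball of radius $s$ around $m$, since $\|u\|_p\leq\|u\|_2$ for $p\geq2$. By the triangle inequality it therefore suffices to show
\[
    \|m_t\|_p+\delta_p^{\mathrm E}t(1-t)D_0^{1-p}\|a-b\|_2^p\leq1 .
\]

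\emph{Step 1 (scalar convexity defect).} We establish, for $p\geq2$, $x,y\in\mathbb R$ and $t\in[0,1]$,
\[
    |(1-t)x+ty|^p\leq(1-t)|x|^p+t|y|^p-c_p\,t(1-t)|x-y|^p ,
\]
with an explicit constant $c_p>0$; we aim for $c_p=2^{2-p}$, and any explicit value suffices for the next steps. We would prove this from the integral representation of the convexity defect of $\varphi(s)=|s|^p$,
\[
    (1-t)\varphi(x)+t\varphi(y)-\varphi(m)=(x-y)^2\int_0^1 k_t(\tau)\,\varphi''\bigl(x+\tau(y-x)\bigr)\,d\tau ,
\]
where $m=(1-t)x+ty$ and $k_t\geq0$ is the piecewise-linear Green kernel with $\int_0^1 k_t=t(1-t)/2$. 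We combine this with $\varphi''(s)=p(p-1)|s|^{p-2}$ and with the observation that $|x+\tau(y-x)|\geq|x-y|/4$ on a subinterval of $\tau$ of length at least $1/4$. The subtle point is retaining the full factor $t(1-t)$, rather than the weaker $\min(t,1-t)^{p-1}$, uniformly in the position of that subinterval relative to the kink of $k_t$. This is the step I expect to be the main obstacle. If the direct kernel estimate loses too much, the fallback is to normalize to $x-y=1$, use homogeneity, and verify the resulting one-parameter inequality in $(x,t)$ by a monotonicity argument.

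\emph{Step 2 (summation and norm comparison).} We sum Step 1 over the two coordinates and use $\|a\|_p,\|b\|_p\leq1$. This gives
\[
    \|m_t\|_p^p\leq1-c_p\,t(1-t)\|a-b\|_p^p .
\]
In $\mathbb R^2$ we have $\|u\|_p\geq2^{1/p-1/2}\|u\|_2$, so
\[
    \|m_t\|_p^p\leq1-\varepsilon,
    \qquad
    \varepsilon:=c_p2^{1-p/2}t(1-t)\|a-b\|_2^p .
\]
Since $(1-\varepsilon)^{1/p}\leq1-\varepsilon/p$, it follows that $\|m_t\|_p\leq1-\varepsilon/p$.

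\emph{Step 3 (constants).} It remains to check that
\[
    \delta_p^{\mathrm E}D_0^{1-p}\leq\frac{c_p2^{1-p/2}}{p},
\]
that is,
\[
    2^{1/p-1/2}\,2^{(3/2-1/p)(1-p)}\leq c_p\,2^{1-p/2}.
\]
The left side decays like $2^{-3p/2}$ up to bounded factors, while the right side decays like $2^{-3p/2}$ (with $c_p=2^{2-p}$), up to bounded factors. So the comparison reduces to elementary exponent bookkeeping, which we would carry out explicitly. If Step 1 only yields a smaller $c_p$, the slack in $D_0^{1-p}$ is where the adjustment would be absorbed.
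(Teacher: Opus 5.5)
\textbf{Gap: the constant.} Your overall architecture parallels the paper's, and each of those steps is sound: a power-type defect, summation over coordinates, $\|u\|_p\ge 2^{1/p-1/2}\|u\|_2$, $(1-\varepsilon)^{1/p}\le 1-\varepsilon/p$, and the triangle inequality with $\|\zeta\|_p\le\|\zeta\|_2$.

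The problem is the size of $c_p$. The statement fixes $\delta_p^{\mathrm E}D_0^{1-p}=2^{2-3p/2}/p$, so Step 3 holds if and only if $c_p\ge 2^{1-p}$. There is no slack in $D_0^{1-p}$, since $D_0$ is the fixed Euclidean diameter of $B_p$. So it is false that ``any explicit value suffices.''

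The kernel estimate you describe does not reach this threshold. Retaining $t(1-t)$ is not the hard part: it follows from $k_t(\tau)\ge t(1-t)\min\{\tau,1-\tau\}$, which gives $\int_J k_t\ge t(1-t)/32$. But bounding $|x+\tau(y-x)|\ge|x-y|/4$ on a subinterval $J$ of length $1/4$ then yields only $c_p=p(p-1)2^{-2p-1}$. This falls short of $2^{1-p}$ by the factor $p(p-1)2^{-p-2}<1/5$ for every $p\ge2$. More fundamentally, the threshold $|x-y|/4$ produces a base $4^{-p}$, which no bookkeeping can turn into $2^{-p}$ for large $p$.

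Your stated target $c_p=2^{2-p}$ is sharp: equality holds at $t=1/2$, $y=-x$. So a lossy estimate cannot give it, and your fallback monotonicity verification is left unspecified.

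\textbf{Fix.} What you need is exactly $c_p=2^{1-p}$, and it follows at once from the scalar Clarkson inequality $|\tfrac{x+y}{2}|^p+|\tfrac{x-y}{2}|^p\le\tfrac12(|x|^p+|y|^p)$ plus convexity of $|\cdot|^p$. For $t\le1/2$, write $m=(1-2t)x+2t\cdot\tfrac{x+y}{2}$, so that
\[
|m|^p\le(1-2t)|x|^p+2t\Bigl|\tfrac{x+y}{2}\Bigr|^p\le(1-t)|x|^p+t|y|^p-2^{1-p}\,t\,|x-y|^p .
\]
Then use $t\ge t(1-t)$; the case $t\ge1/2$ is symmetric. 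With this constant your Steps 2 and 3 close with equality in the constant.

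This is the paper's mechanism: a midpoint margin from Clarkson, then interpolation to general $t$ by convexity at the cost of a factor $2$. The only difference is that the paper interpolates the norm $\|\cdot\|_p$ after extracting $1-\|\tfrac{a+b}{2}\|_p\ge d_p^p/(p2^p)$. You would instead interpolate $|\cdot|^p$ coordinatewise, and the resulting constants coincide.

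If you prefer to keep the kernel route, do not discard mass. Keep the bound $k_t(\tau)\ge t(1-t)\min\{\tau,1-\tau\}$. A symmetric-rearrangement argument then shows that the worst zero crossing of $\tau\mapsto x+\tau(y-x)$ is at $\tau=1/2$. Since $p(p-1)\int_0^1\min\{\tau,1-\tau\}\,|\tau-\tfrac12|^{p-2}\,d\tau=2^{1-p}$, this again gives exactly the required constant.
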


\begin{proof}
Clarkson's inequality for $p\geq2$ gives
\[
    \left\|
        \frac{a+b}{2}
    \right\|_p^p
    +
    \left\|
        \frac{a-b}{2}
    \right\|_p^p
    \leq
    \frac{
        \|a\|_p^p+\|b\|_p^p
    }{2}
    \leq1.
\]
Writing
\[
    d_p:=\|a-b\|_p,
\]
we obtain
\[
    \left\|
        \frac{a+b}{2}
    \right\|_p
    \leq
    \left(
        1-\frac{d_p^p}{2^p}
    \right)^{1/p}.
\]
Since $s\mapsto s^{1/p}$ is concave on $[0,1]$,
\[
    (1-s)^{1/p}
    \leq
    1-\frac{s}{p},
    \qquad
    0\leq s\leq1,
\]
and therefore,
\begin{equation}
\label{eq:midpoint-margin-p}
    1-
    \left\|
        \frac{a+b}{2}
    \right\|_p
    \geq
    \frac{d_p^p}{p2^p}.
\end{equation}
We now pass from the midpoint to an arbitrary $t$. Suppose first that
$0\leq t\leq1/2$. Then,
\[
    m_t
    =
    (1-2t)a
    +
    2t\frac{a+b}{2}.
\]
By convexity of the norm and \eqref{eq:midpoint-margin-p},
\[
    \|m_t\|_p
    \leq
    (1-2t)
    +
    2t
    \left(
        1-\frac{d_p^p}{p2^p}
    \right),
\]
so
\[
    1-\|m_t\|_p
    \geq
    \frac{2t}{p2^p}d_p^p.
\]
Since $t(1-t)\leq t$,
\[
    1-\|m_t\|_p
    \geq
    \frac{
        t(1-t)
    }{
        p2^{p-1}
    }
    d_p^p.
\]
The case $1/2\leq t\leq1$ follows by interchanging $a$ and $b$. Hence,
for all $t\in[0,1]$,
\begin{equation}
\label{eq:arbitrary-t-p-margin}
    1-\|m_t\|_p
    \geq
    \frac{
        t(1-t)
    }{
        p2^{p-1}
    }
    \|a-b\|_p^p.
\end{equation}
In dimension two and for $p\geq2$,
\[
    \|z\|_p
    \geq
    2^{1/p-1/2}\|z\|_2.
\]
Consequently,
\[
    \|a-b\|_p^p
    \geq
    2^{1-p/2}\|a-b\|_2^p.
\]
Substituting into \eqref{eq:arbitrary-t-p-margin} gives
\begin{equation}
\label{eq:euclidean-margin-2}
    1-\|m_t\|_p
    \geq
    \frac{
        2^{2-3p/2}
    }{p}
    t(1-t)
    \|a-b\|_2^p.
\end{equation}
Now, let $\zeta\in\mathbb R^2$ satisfy
\[
    \|\zeta\|_2
    \leq
    \frac{
        2^{2-3p/2}
    }{p}
    t(1-t)
    \|a-b\|_2^p.
\]
Since $\|\zeta\|_p\leq\|\zeta\|_2$ for $p\geq2$,
\[
    \|m_t+\zeta\|_p
    \leq
    \|m_t\|_p+\|\zeta\|_p
    \leq1.
\]
Thus, $m_t+\zeta\in B_p$.
Finally,
\[
    \delta_p^{\mathrm E}D_0^{1-p}
    =
    \frac{
        2^{2-3p/2}
    }{p},
\]
because
\[
    D_0=2^{3/2-1/p}.
\]
This proves \eqref{eq:euclidean-power-ball}.
\end{proof}

\begin{remark}[Relation with Euclidean uniform convexity]
\label{rem:euclidean-uniform-convexity-literature}
The power-$p$ uniform convexity of $\ell_p$ balls is classical and is part
of the Euclidean uniformly convex feasible-set framework used in
projection-free optimization. Lemma~\ref{lem:explicit-euclidean-power-margin}
is included to make the particular ball-inclusion constant required below
fully explicit; no novelty is claimed for the Euclidean statement.
\end{remark}
The explicit inclusion also gives the buffer needed to transport the
property through a small metric perturbation.

\begin{lemma}[Buffered Euclidean inclusion]
\label{lem:euclidean-buffered-margin}
Let
\[
    R_p(a,b,t)
    :=
    \delta_p^{\mathrm E}
    t(1-t)
    D_0^{1-p}
    \|a-b\|_2^p.
\]
For every $\theta\in[0,1]$ and every $\zeta$ satisfying
\[
    \|\zeta\|_2
    \leq
    (1-\theta)R_p(a,b,t),
\]
one has
\begin{equation}
\label{eq:euclidean-buffer}
    B_2\!\left(
        m_t+\zeta,\,
        \theta R_p(a,b,t)
    \right)
    \subseteq
    B_p.
\end{equation}
Equivalently,
\[
    \dist_2
    \left(
        m_t+\zeta,
        \mathbb R^2\setminus B_p
    \right)
    \geq
    \theta R_p(a,b,t).
\]
\end{lemma}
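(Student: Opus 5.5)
The plan is to reduce the statement to Lemma~\ref{lem:explicit-euclidean-power-margin} by the triangle inequality. Fix $a,b\in B_p$, $t\in[0,1]$, $\theta\in[0,1]$, and $\zeta$ with $\|\zeta\|_2\leq(1-\theta)R_p(a,b,t)$. Take an arbitrary point $y$ of the ball $B_2(m_t+\zeta,\theta R_p(a,b,t))$, so that $y=m_t+\zeta+\omega$ with $\|\omega\|_2\leq\theta R_p(a,b,t)$. The aim is to show $y\in B_p$.

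Set $\zeta':=\zeta+\omega$. The triangle inequality gives
\[
    \|\zeta'\|_2
    \leq
    (1-\theta)R_p(a,b,t)+\theta R_p(a,b,t)
    =
    R_p(a,b,t).
\]
Hence $y=m_t+\zeta'$ lies in the closed ball $B_2(m_t,R_p(a,b,t))$. By \eqref{eq:euclidean-power-ball}, that ball is contained in $B_p$, so $y\in B_p$. This proves \eqref{eq:euclidean-buffer}.

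For the equivalent distance formulation, we use that $B_p$ is closed. If some point $u\notin B_p$ satisfied $\|u-(m_t+\zeta)\|_2<\theta R_p(a,b,t)$, then $u$ would lie in the ball just shown to be contained in $B_p$, which is a contradiction. Therefore every point of the complement is at distance at least $\theta R_p(a,b,t)$, and so is the infimum. Conversely, a distance lower bound gives the open-ball inclusion, and closedness of $B_p$ upgrades it to the closed-ball inclusion.

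There is no real obstacle here; the argument only needs care with open versus closed balls. Those degenerate cases, $\theta=0$ or $R_p=0$, are trivial, since then the ball reduces to the single point $m_t+\zeta$, which lies in $B_p$ by Lemma~\ref{lem:explicit-euclidean-power-margin}.
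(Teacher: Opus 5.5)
Your proof is correct and matches the paper's argument: both write a point of the buffered ball as $m_t+\zeta+e$, absorb $e$ into $\zeta$ by the triangle inequality, and apply Lemma~\ref{lem:explicit-euclidean-power-margin}. Your remarks on open versus closed balls for the distance formulation and on the degenerate case $\theta R_p(a,b,t)=0$ are accurate details that the paper leaves implicit.
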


\begin{proof}
If $\|e\|_2\leq\theta R_p(a,b,t)$, then
\[
    \|\zeta+e\|_2
    \leq
    R_p(a,b,t).
\]
Lemma~\ref{lem:explicit-euclidean-power-margin} therefore gives
\[
    m_t+\zeta+e\in B_p.
\]
Since this holds for every such $e$, the claimed ball inclusion follows.
\end{proof}

\subsubsection{Stability under the hyperbolic metric}

We next show that the Euclidean margin survives the small hyperbolic
perturbation generated by the rescaled Klein chart.

\begin{lemma}[Stability of the double-geodesic power margin]
\label{lem:dguc-c2-stability}
Let $g_\varepsilon$ be a family of Riemannian metrics on a fixed
neighborhood of $B_p$ such that
\begin{equation}
\label{eq:metric-c2-small}
    \|g_\varepsilon-I\|_{C^2}
    \leq
    C\varepsilon,
    \qquad
    0<\varepsilon\leq\varepsilon_0.
\end{equation}
Assume in addition that the image of the $g_\varepsilon$-geodesic joining
any two points $a,b\in B_p$ is the Euclidean segment $[a,b]$. Let
$\lambda_\varepsilon(t)$ denote its Euclidean affine parameter when the
geodesic is parameterized at constant Riemannian speed. Assume uniformly
in $a,b,t$ that
\begin{equation}
\label{eq:lambda-comparison-stability}
    \lambda_\varepsilon(t)
    \bigl(1-\lambda_\varepsilon(t)\bigr)
    \geq
    (1-C\varepsilon)t(1-t).
\end{equation}
Then, there exist constants $c_p,C_p>0$, independent of $\varepsilon$, such
that
\begin{equation}
\label{eq:stable-dguc}
    \Delta_{B_p}^{g_\varepsilon}(r)
    \geq
    c_p\delta_p^{\mathrm E}
    (1-C_p\varepsilon)
    D_\varepsilon^{1-p}r^p,
\end{equation}
where $D_\varepsilon$ is the $g_\varepsilon$-diameter of $B_p$.
\end{lemma}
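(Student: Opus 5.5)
Given $a,b\in B_p$ and $t\in[0,1]$, we must show that $\Exp^{g_\varepsilon}_{\gamma(t)}(z)\in B_p$ for every $z\in T_{\gamma(t)}\M$ with $\|z\|_{g_\varepsilon}\le t(1-t)\,c_p\delta_p^{\mathrm E}(1-C_p\varepsilon)D_\varepsilon^{1-p}d_\varepsilon(a,b)^p$. Here $\gamma$ is the constant-speed $g_\varepsilon$-geodesic from $a$ to $b$. By hypothesis its trace is $[a,b]$, so $\gamma(t)=m_{\lambda}$ with $\lambda=\lambda_\varepsilon(t)$. The plan is to reduce everything to Lemma~\ref{lem:explicit-euclidean-power-margin} in three comparison steps: metric quantities, base point, and exponential map.

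\emph{Step 1 (metric comparison).} From \eqref{eq:metric-c2-small}, $g_\varepsilon$ and the Euclidean metric agree up to factors $1\pm C\varepsilon$. Hence $d_\varepsilon(a,b)\le(1+C'\varepsilon)\|a-b\|_2$ (take the segment as a competitor), $D_\varepsilon\ge(1-C'\varepsilon)D_0$, and $\|z\|_2\le(1+C'\varepsilon)\|z\|_{g_\varepsilon}$. Since $1-p<0$, we have $D_\varepsilon^{1-p}\le(1-C'\varepsilon)^{1-p}D_0^{1-p}$. Combining, the admissible $g_\varepsilon$-radius is bounded in Euclidean norm by $(1+C''\varepsilon)\,c_p\delta_p^{\mathrm E}(1-C_p\varepsilon)t(1-t)D_0^{1-p}\|a-b\|_2^p$. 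Using \eqref{eq:lambda-comparison-stability}, $t(1-t)\le(1-C\varepsilon)^{-1}\lambda(1-\lambda)$. So after shrinking $\varepsilon_0$ and choosing $C_p$ large, this is at most $c_p R_p(a,b,\lambda)$, where $R_p$ is as in Lemma~\ref{lem:euclidean-buffered-margin}.

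\emph{Step 2 (exponential map).} We compare $\Exp^{g_\varepsilon}_{y}(z)$ with $y+z$ for $y$ in a neighbourhood of $B_p$ and $z$ small. The geodesic equation is $\ddot\sigma=-\Gamma_\varepsilon(\sigma)(\dot\sigma,\dot\sigma)$, with $\|\Gamma_\varepsilon\|_{C^0}\le C\varepsilon$ by the $C^1$ part of \eqref{eq:metric-c2-small}. Integrating over unit time gives $\|\Exp^{g_\varepsilon}_y(z)-(y+z)\|_2\le C'''\varepsilon\|z\|_2^2$, and also that the exponential is defined on the relevant small balls. Since every admissible $z$ has $\|z\|_2\le \|z\|_2\cdot\operatorname{const}$ with a uniformly bounded constant ($R_p\le\delta_p^{\mathrm E}D_0/4$), this error is at most $C'''\varepsilon\,\operatorname{const}\cdot\|z\|_2$. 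It is therefore dominated by a fraction $\theta$ of $R_p(a,b,\lambda)$ as soon as $\|z\|_2\le c_pR_p$.

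\emph{Step 3 (conclusion).} Take $c_p=1/2$ and $\theta=1/2$, and apply Lemma~\ref{lem:euclidean-buffered-margin} at parameter $\lambda$ with $\zeta=z$. We have $\|\zeta\|_2\le(1-\theta)R_p(a,b,\lambda)$, so the ball of radius $\theta R_p(a,b,\lambda)$ around $m_\lambda+z$ lies in $B_p$. The Step 2 error $C'''\varepsilon\|z\|_2^2\le C'''\varepsilon\, R_p\|z\|_2$ is smaller than $\theta R_p$ for $\varepsilon$ small, uniformly, because $\|z\|_2$ is bounded. Therefore $\Exp^{g_\varepsilon}_{m_\lambda}(z)\in B_p$, which gives \eqref{eq:stable-dguc}.

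\emph{Main obstacle.} The delicate point is uniformity as $\|a-b\|_2\to0$ or $t\to0,1$, where $R_p$ degenerates. The bound in Step 2 must therefore be \emph{quadratic} in $\|z\|$, so that it is relative to $R_p$, rather than just $O(\varepsilon)$ in absolute terms. An additive $O(\varepsilon)$ error would destroy the inclusion for short chords. I would secure this by the ODE estimate above, which gives an error proportional to $\varepsilon\|z\|^2$ uniformly for base points in a compact neighbourhood.
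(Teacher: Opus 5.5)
Your proposal is correct and follows essentially the same route as the paper's proof. Uniform metric equivalence together with the parameter comparison \eqref{eq:lambda-comparison-stability} bounds the admissible $g_\varepsilon$-radius at $\gamma(t)=m_{\lambda_\varepsilon(t)}$ by a fraction of $R_p(a,b,\lambda_\varepsilon(t))$. The $O(\varepsilon)$ Christoffel bound then makes $\Exp^{g_\varepsilon}_{m_\lambda}(z)$ a relatively small perturbation of $m_\lambda+z$, and Lemma~\ref{lem:euclidean-buffered-margin} absorbs that error uniformly as the chord or $t(1-t)$ degenerates.

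The differences are bookkeeping only. You fix the buffer $c_p=\theta=1/2$, and the needed smallness of $\varepsilon$ can be absorbed into $C_p$, since the bound is vacuous once $C_p\varepsilon\geq1$. The paper instead uses an $\varepsilon$-dependent buffer $\theta_\varepsilon\geq c\varepsilon$. Your quadratic estimate $O(\varepsilon\|z\|_2^2)$ is sharper than the paper's $O(\varepsilon\|z\|_2)$, but the linear relative bound you actually invoke in Step~3 is already enough.
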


\begin{proof}
Fix $a,b\in B_p$ and $t\in[0,1]$, set
$\lambda=\lambda_\varepsilon(t)$, and denote the constant-speed geodesic
point by
\[
    m_\lambda=(1-\lambda)a+\lambda b.
\] Let
\[
    r_\varepsilon
    :=
    d_{g_\varepsilon}(a,b).
\]
Uniform equivalence of the metrics implied by
\eqref{eq:metric-c2-small} gives
\begin{align}
    r_\varepsilon
    &=
    \|a-b\|_2
    \bigl(1+O(\varepsilon)\bigr),
    \label{eq:distance-equivalence-stability}
    \\
    D_\varepsilon
    &=
    D_0
    \bigl(1+O(\varepsilon)\bigr),
    \label{eq:diameter-equivalence-stability}
    \\
    \|z\|_2
    &=
    \|z\|_{g_\varepsilon,m_\lambda}
    \bigl(1+O(\varepsilon)\bigr),
    \label{eq:tangent-equivalence-stability}
\end{align}
uniformly in the relevant variables.
The Christoffel symbols satisfy
\[
    \|\Gamma(g_\varepsilon)\|_{C^1}
    =
    O(\varepsilon).
\]
The geodesic equation therefore implies the local exponential estimate
\begin{equation}
\label{eq:exp-stability}
    \Exp_{m_\lambda}^{g_\varepsilon}(z)
    =
    m_\lambda+z+E_\varepsilon(m_\lambda,z),
\end{equation}
where, uniformly for the perturbations considered below,
\begin{equation}
\label{eq:exp-stability-error}
    \|E_\varepsilon(m_\lambda,z)\|_2
    \leq
    C\varepsilon\|z\|_2.
\end{equation}
Choose $c_p>0$ sufficiently small and $C_p>0$ sufficiently large. Suppose
that
\begin{equation}
\label{eq:stable-riemannian-radius}
    \|z\|_{g_\varepsilon,m_\lambda}
    \leq
    c_p\delta_p^{\mathrm E}
    (1-C_p\varepsilon)
    t(1-t)
    D_\varepsilon^{1-p}
    r_\varepsilon^p.
\end{equation}
Using
\eqref{eq:distance-equivalence-stability}--
\eqref{eq:tangent-equivalence-stability}, and absorbing fixed
norm-equivalence constants into $c_p$, we obtain
\begin{equation}
\label{eq:stable-radius-euclidean}
    \|z\|_2
    \leq
    (1-\theta_\varepsilon)
    R_p(a,b,\lambda),
\end{equation}
where
\[
    \theta_\varepsilon
    \geq
    c\varepsilon
\]
for some constant $c>0$ independent of $a,b,t$.
Lemma~\ref{lem:euclidean-buffered-margin} therefore gives
\begin{equation}
\label{eq:stable-available-buffer}
    \dist_2
    \left(
        m_\lambda+z,
        \mathbb R^2\setminus B_p
    \right)
    \geq
    \theta_\varepsilon
    R_p(a,b,\lambda).
\end{equation}
On the other hand,
\eqref{eq:exp-stability-error} and
\eqref{eq:stable-radius-euclidean} imply
\[
    \|E_\varepsilon(m_\lambda,z)\|_2
    \leq
    C'\varepsilon
    R_p(a,b,\lambda).
\]
Taking $C_p$ sufficiently large ensures that
\[
    C'\varepsilon
    \leq
    \theta_\varepsilon.
\]
Hence, the exponential endpoint cannot leave $B_p$:
\[
    \Exp_{m_\lambda}^{g_\varepsilon}(z)\in B_p.
\]
The estimate is uniform as $t\to0,1$ and as $a\to b$, because both the
available Euclidean buffer and the exponential-map error vanish with the
same factor
\[
    t(1-t)\|a-b\|_2^p.
\]
This proves \eqref{eq:stable-dguc}.
\end{proof}

\begin{corollary}[Power geometry of the hyperbolic hard family]
\label{cor:hard-set-dguc}
Fix $p\geq2$ and $\kappa>0$. There exist constants $c_p,C_p>0$ and
$\rho_0=\rho_0(p,\kappa)>0$ such that, for every
$0<\rho\leq\rho_0$,
\begin{equation}
\label{eq:hard-set-alpha-quantitative}
    \C_{p,\rho}
    \in
    \DGUC_p(\alpha_{p,\kappa,\rho}),
    \qquad
    \alpha_{p,\kappa,\rho}
    \geq
    c_p\delta_p^{\mathrm E}
    \left(
        1-C_p\kappa^2\rho^2
    \right)
    >0.
\end{equation}
\end{corollary}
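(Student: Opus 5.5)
The plan is to reduce Corollary~\ref{cor:hard-set-dguc} to Lemma~\ref{lem:dguc-c2-stability}, applied with $\varepsilon=\kappa^2\rho^2$, through a rescaling of the Klein chart. First, define the affine blow-up $\Phi_\rho(u):=c+\rho u$, which maps $B_p$ onto $\C_{p,\rho}$. Then pull back the hyperbolic metric and divide by $\rho^2$:
\[
    g_\varepsilon:=\rho^{-2}\Phi_\rho^*g_{\mathbb K}.
\]
By Proposition~\ref{prop:scale-invariance}, the $\DGUC_p$ constant of $\C_{p,\rho}$ in $g_{\mathbb K}$ equals that of $B_p$ in $g_\varepsilon$, since the factor $\rho^{-2}$ is a constant rescaling. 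It therefore suffices to check the hypotheses of Lemma~\ref{lem:dguc-c2-stability} for $g_\varepsilon$ on a fixed neighborhood of $B_p$.

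\textbf{Step 1: metric closeness.} In Klein coordinates the metric is the explicit rational expression
\[
    g_{\mathbb K}(z)=\frac{dz^2}{1-\kappa^2|z|^2}+\frac{\kappa^2(z\cdot dz)^2}{(1-\kappa^2|z|^2)^2}.
\]
Hence $(g_\varepsilon)_{ij}(u)=g_{ij}(c+\rho u)$. The points $c+\rho u$ satisfy $|c+\rho u|\le C\rho$ on a neighborhood of $B_p$, and by \eqref{eq:normal-metric-expansion}, $g_{ij}-\delta_{ij}=O(\kappa^2|z|^2)$. So the $C^0$ deviation of $g_\varepsilon$ from $I$ is $O(\kappa^2\rho^2)$. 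Each $u$-derivative gains a factor $\rho$ and acts on a smooth function whose derivatives are $O(\kappa^2|z|)$ and $O(\kappa^2)$, so the $C^2$ deviation is $O(\kappa^2\rho^2)$ as well, provided $\rho\le\rho_0$ with $\kappa\rho_0$ small. This gives \eqref{eq:metric-c2-small} with $\varepsilon=\kappa^2\rho^2$.

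\textbf{Step 2: geodesic images.} Hyperbolic geodesics are Euclidean chords in the Klein model, and an affine map preserves chords. So the $g_\varepsilon$-geodesic joining $a,b\in B_p$ has image $[a,b]$, and uniqueness holds because the space is Hadamard. This also gives unique geodesic convexity of $\C_{p,\rho}$.

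\textbf{Step 3: parameter comparison \eqref{eq:lambda-comparison-stability}.} I expect this to be the main obstacle, since it must hold uniformly as $t\to0,1$. Parameterize the chord affinely and let
\[
    s(\lambda)=\int_0^\lambda \norm{b-a}_{g_\varepsilon,m_\sigma}\,d\sigma .
\]
The constant-speed parameter is $t=s(\lambda)/s(1)$. The integrand has the form $|b-a|(1+O(\varepsilon))$, so the density $dt/d\lambda$ lies in $[1-C\varepsilon,\,1+C\varepsilon]$. Integrating from $0$ gives $\lambda\ge(1-C\varepsilon)t$; integrating from $1$ gives $1-\lambda\ge(1-C\varepsilon)(1-t)$. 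The product is then at least $(1-C\varepsilon)^2t(1-t)$, which is \eqref{eq:lambda-comparison-stability} after adjusting the constant. A density bound, rather than a pointwise bound on $\lambda-t$, is what preserves the $t(1-t)$ factor near the endpoints.

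\textbf{Step 4: conclusion.} Lemma~\ref{lem:dguc-c2-stability} now yields
\[
    \Delta^{g_\varepsilon}_{B_p}(r)\ge c_p\delta_p^{\mathrm E}(1-C_p\kappa^2\rho^2)D_\varepsilon^{1-p}r^p .
\]
Rescaling back transfers this bound to $\C_{p,\rho}$ with the same $\alpha$. Finally, shrink $\rho_0$ further so that $C_p\kappa^2\rho_0^2<1$, which makes $\alpha_{p,\kappa,\rho}>0$.
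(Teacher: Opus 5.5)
Your proposal is correct and follows the paper's proof step for step. The paper's argument has the same sequence: affine pullback under $\Phi_\rho$ with $\rho^{-2}$ rescaling, $C^2$-closeness with $\varepsilon=\kappa^2\rho^2$, Klein chords as geodesics, and the parameter comparison from a speed-density bound (your integration from both endpoints spells out the paper's ``partial versus total arc-length'' step). It then concludes with Lemma~\ref{lem:dguc-c2-stability} and Proposition~\ref{prop:scale-invariance}; your explicit Klein-metric computation also shows that the paper's ``fixed linear normalization'' is unnecessary, since the metric is exactly the identity at the origin.
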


\begin{proof}
Pull back the hyperbolic metric under the affine chart map
\[
    \Phi_\rho(\bar z)
    :=
    c+\rho\bar z.
\]
After division by the constant metric scale $\rho^2$, the rescaled pullback
metric on a fixed neighborhood of $B_p$ satisfies, after a fixed linear
normalization,
\begin{equation}
\label{eq:klein-rescaled-c2}
    \left\|
        \rho^{-2}\Phi_\rho^\ast g_\kappa-I
    \right\|_{C^2}
    \leq
    C\kappa^2\rho^2.
\end{equation}
Moreover, Klein geodesics are Euclidean chords. Along a fixed chord, the
rescaled metric changes its speed density by a factor $1+O(\kappa^2\rho^2)$
uniformly on $B_p$. Writing the constant-speed parameter as
$\lambda_{\rho}(t)$ in Euclidean affine coordinates and comparing the
partial and total arc-length integrals gives
\[
    \lambda_{\rho}(t)
    \bigl(1-\lambda_{\rho}(t)\bigr)
    \geq
    \bigl(1-C\kappa^2\rho^2\bigr)t(1-t),
\]
uniformly in the endpoints and in $t\in[0,1]$. Thus the parameter comparison
required in Lemma~\ref{lem:dguc-c2-stability} holds as well.
Applying Lemma~\ref{lem:dguc-c2-stability} with
\[
    \varepsilon
    =
    \kappa^2\rho^2
\]
gives a positive power-$p$ double-geodesic modulus for the rescaled metric.
Finally, the normalization in Definition~\ref{def:dguc} is invariant under
constant rescaling of the metric, as established in
Proposition~\ref{prop:scale-invariance}. Therefore, the same dimensionless
parameter transfers back to $\C_{p,\rho}$.

Choosing $\rho_0$ so that
\[
    C_p\kappa^2\rho_0^2<\frac12
\]
ensures positivity of the right-hand side of
\eqref{eq:hard-set-alpha-quantitative}.
\end{proof}
Consider now the objective
\begin{equation}
\label{eq:hyperbolic-objective}
    f_\star(x)
    :=
    \frac12
    d_\kappa(x,x^\star)^2,
    \qquad
    x\in\C_{p,\rho}.
\end{equation}
The point $x^\star$ is both the boundary contact point and the unique
minimizer. In particular,
\[
    \nabla f_\star(x^\star)=0.
\]
This removes the linear normal term that would arise from an external-center
squared-distance objective.
We parameterize points in the slow region by
\begin{equation}
\label{eq:slow-coordinates}
    x(u,y)
    =
    x^\star
    +
    \rho
    \left(
        -u,
        \sigma y u^{1+\alpha_p}
    \right),
    \qquad
    \alpha_p:=\frac{p-1}{p},
    \qquad
    \sigma\in\{-1,1\}.
\end{equation}
The variable $u>0$ is the longitudinal depth, while $y>0$ is the normalized
transverse magnitude. The sign $\sigma$ alternates along the slow orbit.
Near the contact point, the boundary satisfies
\begin{equation}
\label{eq:p-contact-expansion}
    1-(1-|s|^p)^{1/p}
    =
    \frac1p|s|^p
    +
    O(|s|^{2p}),
\end{equation}
so the natural weighted blow-up is
\begin{equation}
\label{eq:epsilon-blowup}
    \varepsilon:=u^{1/p},
    \qquad
    s=\varepsilon z,
    \qquad
    u=\varepsilon^p.
\end{equation}

\begin{remark}[Why centering the objective preserves the oracle scale]
\label{rem:objective-repair-scale}
In the flat normal form, the displacement from $x^\star$ has components
\[
    \left(
        -u,
        \sigma y u^{2-1/p}
    \right).
\]
The gradient of \eqref{eq:hyperbolic-objective} has the same leading
scales. Their ratio is $u^{(p-1)/p}$, and the duality map of the
$\ell_p$ boundary therefore places the LMO endpoint at transverse scale
$u^{1/p}$. Centering the objective at $x^\star$ removes the first-order
objective gap without removing the singular oracle scale responsible for
the slow dynamics.
\end{remark}

\subsection{Uniform local expansions}
\label{subsec:hyperbolic-expansions}

We next record the normalized expansion used to transfer the flat slow
dynamics to fixed negative curvature.

\begin{lemma}[Normalized hyperbolic expansion]
\label{lem:uniform-hyperbolic-expansion}
Fix a compact interval
\[
    I\Subset(0,\infty)
\]
containing the flat slow fixed point $C_p$. For sufficiently small fixed
$\rho>0$, there exist $u_0>0$, $C_I<\infty$, smooth functions
$B_p,G_p:I\to\mathbb R$ with $G_p>0$, and remainders $R_v,R_\tau$ such that,
uniformly for $y\in I$ and $0<u\leq u_0$,
\begin{align}
    (v_\kappa-x)_1
    &=
    u
    \left[
        B_p(y)+R_v(u,y)
    \right],
    &
    \|R_v(u,\cdot)\|_{C^1(I)}
    &\leq
    C_I(1+\kappa^2\rho^2)u^{2/p},
    \label{eq:lmo-longitudinal-expansion}
    \\
    \tau_\kappa(u,y)
    &=
    u^{2(p-1)/p}
    \left[
        G_p(y)+R_\tau(u,y)
    \right],
    &
    \|R_\tau(u,\cdot)\|_{C^1(I)}
    &\leq
    C_I(1+\kappa^2\rho^2)u^{2/p}.
    \label{eq:line-search-expansion}
\end{align}
Moreover, after the local boundary jet is held fixed, the contribution of
the ambient hyperbolic metric to either normalized remainder is
$O(\kappa^2u^{2/p})$.
\end{lemma}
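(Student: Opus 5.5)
The plan is to work entirely in the rescaled Klein chart $\bar z=(z-c)/\rho$, where $\C_{p,\rho}$ becomes the fixed ball $B_p$ and $x^\star$ becomes $(1,0)$. By \eqref{eq:klein-rescaled-c2}, the metric there is $I+O(\kappa^2\rho^2)$ in $C^2$. Since $g_{ij}(0)=\delta_{ij}$ and $\partial g(0)=0$ by \eqref{eq:normal-metric-expansion}, the deviation in a neighbourhood of $x^\star$ is in fact quadratic in the distance to $x^\star$. I would prove both expansions in two stages. First, for the flat Euclidean metric, derive the leading coefficients $B_p$ and $G_p$ together with a remainder of order $u^{2/p}$ from the boundary jet \eqref{eq:p-contact-expansion}. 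Second, show that replacing the flat metric by $g_\kappa$ changes the normalized quantities by $O(\kappa^2u^{2/p})$ in $C^1(I)$.

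For the flat stage, I would write the gradient of $f_\star$ at $x(u,y)$. In the flat case this is just the displacement $x-x^\star=\rho(-u,\sigma yu^{1+\alpha_p})$. The Euclidean LMO over $B_p$ is then given explicitly by the $\ell_p$ duality map: its endpoint $v=(v_1,v_2)$ satisfies $|v_2|^{p-1}/v_1^{p-1}\propto$ (ratio of gradient components) $=yu^{\alpha_p}$. This gives $v_2\asymp y^{1/(p-1)}u^{1/p}$, the oracle scale identified in Remark~\ref{rem:objective-repair-scale}. Substituting $s=\varepsilon z$ with $\varepsilon=u^{1/p}$ from \eqref{eq:epsilon-blowup}, the longitudinal displacement $(v-x)_1$ equals $u$ times an explicit smooth function of $y$. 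Two quantities contribute to it: the boundary depth $\tfrac1p|v_2|^p$, of order $u$, and the $-u$ term. Together they define $B_p(y)$. The $O(|s|^{2p})$ term of \eqref{eq:p-contact-expansion} produces a relative error $u$, and the expansion of the duality map in $yu^{\alpha_p}$ produces relative errors in powers of $u^{1/p}$. I expect the first such correction to be of relative order $u^{2/p}$, consistent with the evenness in $s$ that the sign alternation $\sigma$ exploits. The step parameter $\tau$ is handled the same way. Whether it is the exact line-search step or the short step $g/(Lr^2)$, the gap is $g\sim u^2$ and $r^2\sim u^{2/p}$, so $\tau\sim u^{2-2/p}$. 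This matches \eqref{eq:line-search-expansion} and identifies $G_p(y)>0$. The $C^1(I)$ bounds follow because every expression is a smooth function of $(u^{1/p},y)$ on $[0,u_0^{1/p}]\times I$ with $y$ bounded away from $0$, so differentiating in $y$ preserves the remainder order.

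For the hyperbolic stage, I would note that Klein geodesics are chords, so the geometric segment from $x$ to $v$ is unchanged. Only $\Log_x$, the gradient, and the constant-speed parameter are perturbed. At points within distance $O(\rho u^{1/p})$ of $x^\star$, the metric deviates from the identity by $O(\kappa^2\rho^2u^{2/p})$, and its first derivatives by $O(\kappa^2\rho^2u^{1/p})$. The Riemannian gradient of $\tfrac12 d_\kappa^2$ is $-\Log_x x^\star$, which equals the flat displacement up to a relative factor $1+O(\kappa^2|x-x^\star|^2)$. The hyperbolic LMO maximizes $\langle w,\Log_x z\rangle$, which is a multiplicatively $O(\kappa^2 u^{2/p})$-perturbed version of the Euclidean linear functional on the relevant region. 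Implicit differentiation of the optimality condition at the boundary then shows that $v_\kappa$ differs from $v$ by a relative $O(\kappa^2u^{2/p})$ in the blown-up coordinates. Making this implicit-function argument uniform at the singular scale is the main obstacle, because the boundary curvature degenerates like $|s|^{p-2}$. I would handle it by working in the blow-up variable $z$, where the boundary is $\tfrac1p|z|^p$ with nondegenerate curvature at $z\neq0$ (here $y\in I$ keeps $z$ away from $0$). In that variable the perturbed maximization problem is a $C^1$-small perturbation of a nondegenerate one. The same perturbation bound carries over to $\tau_\kappa$ through the smoothness inequality and the arc-length parameter comparison used in Corollary~\ref{cor:hard-set-dguc}. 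Combining the flat remainder $C_Iu^{2/p}$ with the curvature remainder $C_I\kappa^2\rho^2u^{2/p}$ gives the stated $C_I(1+\kappa^2\rho^2)u^{2/p}$. Finally, choosing $\rho$ small keeps all points in the chart and makes the implicit-function constants uniform.
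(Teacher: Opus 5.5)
Your treatment of the oracle endpoint is essentially the paper's argument. It uses the blow-up $\varepsilon=u^{1/p}$, $s=\varepsilon z$, and the fact that the centered Klein metric has no linear jet at $x^\star=0$. Curvature therefore enters the normalized stationarity equation only at order $\kappa^2\rho^2\varepsilon^2=\kappa^2\rho^2u^{2/p}$, while the boundary jet enters at order $\varepsilon^p=u$. The argument closes with a uniform implicit-function step in the blow-up variable, where the flat root $z_0(y)=-\sigma y^{1/(p-1)}$ stays away from $0$. Your description of $\langle\Log_x x^\star,\Log_x z\rangle_x$ as the Euclidean functional times chord-length factors is a concrete version of the paper's $\varepsilon^2F_{2,\kappa,\rho}$ term.

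The flat stage should be computed rather than guessed. Write $v=c+\rho\bar v$ with $\bar v\in\partial B_p$ and $w\propto(u,-\sigma yu^{2-1/p})$. The $\ell_p$ duality map then gives $|\bar v_2/\bar v_1|^p=y^{p/(p-1)}u$ exactly, hence $\bar v_1=(1+y^{p/(p-1)}u)^{-1/p}$. All flat remainders are therefore analytic in $u$, of relative size $O(u)\le O(u^{2/p})$. There are no $u^{1/p}$ corrections, and the sign alternation plays no role.

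The step $\tau_\kappa$ is where your sketch does not work as written. In this lemma $\tau_\kappa$ is the exact line-search step, and the short step is not interchangeable with it. With the valid constant $L_\rho=1+O(\kappa^2\rho^2)$, the short step has leading coefficient $G_p/L_\rho$. That differs from the flat $G_p$ by a $u$-independent $O(\kappa^2\rho^2)$ amount, which is why the paper treats the short step separately in Lemma~\ref{lem:short-step-expansion}.

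Nor can the smoothness inequality locate the minimizer. It is one-sided, and even the two-sided global bounds $1\le\operatorname{Hess}f_\star\le L_\rho$ only fix the argmin to relative accuracy $O(\kappa^2\rho^2)$, not $O(u^{2/p})$. The paper instead normalizes the stationarity equation $\langle\Log_{X(\tau)}x^\star,\dot X(\tau)\rangle_{X(\tau)}=0$ via $\tau=\varepsilon^{2p-2}q$. It then applies the implicit-function theorem at the simple flat root $q_0=G_p$.

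There is also a shorter route that makes your arc-length comparison the right tool. Since $x^\star$ is the Klein center, $d_\kappa(z,x^\star)=\kappa^{-1}\operatorname{artanh}(\kappa\|z\|_2)$ is increasing in $\|z\|_2$. So the exact line-search point on the chord from $x$ to $v_\kappa$ is the Euclidean foot of the perpendicular from the origin, exactly as in the flat problem. Thus $\tau_\kappa$ is the flat parameter $-\langle x,v_\kappa-x\rangle/\|v_\kappa-x\|_2^2$ reparametrized by hyperbolic arc length. Because the chord stays within $O(\rho u^{1/p})$ of the center, this multiplies it by $1+O(\kappa^2\rho^2u^{2/p})$. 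Your LMO estimate then controls the replacement of $v$ by $v_\kappa$.
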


\begin{proof}
Set
\[
    \varepsilon=u^{1/p}
\]
and write the boundary parameter as $s=\varepsilon z$. Since
$x^\star=0$, the boundary has the exact representation
\begin{equation}
\label{eq:boundary-exact-2k}
    v(\varepsilon z)
    =
    \rho
    \left(
        (1-|\varepsilon z|^p)^{1/p}-1,
        \varepsilon z
    \right).
\end{equation}
Uniformly, for $z$ in a fixed compact set,
\begin{equation}
\label{eq:boundary-weighted-jet}
    v(\varepsilon z)
    =
    \rho
    \left(
        -\frac1p\varepsilon^p|z|^p,
        \varepsilon z
    \right)
    +
    \rho
    \left(
        O(\varepsilon^{2p}),
        0
    \right),
\end{equation}
with the same order after one $z$-derivative. The current point is
\begin{equation}
\label{eq:current-weighted-2k}
    x(\varepsilon,y)
    =
    \rho
    \left(
        -\varepsilon^p,
        \sigma y\varepsilon^{2p-1}
    \right).
\end{equation}
Hence, every point entering the local oracle calculation is
$O(\rho\varepsilon)$ from the origin.
At the center of the Klein model,
\begin{equation}
\label{eq:normal-even-jet}
    g_{ij}(\xi)
    =
    \delta_{ij}
    -
    \frac13
    R_{ikjl}(0)\xi^k\xi^l
    +
    O(\kappa^4\|\xi\|^4),
    \qquad
    \Gamma_{ij}^k(\xi)
    =
    O(\kappa^2\|\xi\|),
\end{equation}
with corresponding $C^1$ bounds. Thus, the coefficients entering the
first-order stationarity equations differ from their flat counterparts by
$O(\kappa^2\rho^2\varepsilon^2)$. In particular, no
$O(\varepsilon)$ metric correction occurs.

For the LMO, substitute
\eqref{eq:boundary-weighted-jet} and
\eqref{eq:current-weighted-2k} into the stationarity equation and divide by
its common leading power of $\varepsilon$. The normalized equation has the
form
\begin{equation}
\label{eq:normalized-lmo-map-2k}
    \mathscr F_\kappa(\varepsilon,y,z)
    =
    F_0(y,z)
    +
    \varepsilon^2
    F_{2,\kappa,\rho}(\varepsilon,y,z)
    +
    \varepsilon^p
    F_p^{\mathrm{bdry}}(\varepsilon,y,z),
\end{equation}
where the remainder functions and their first $y$-derivatives are uniformly
bounded on the relevant compact set.
Let $z_0(y)$ denote the flat root. Shrinking $I$ if necessary, assume
\[
    \left|
        \partial_zF_0(y,z_0(y))
    \right|
    \geq
    m_I>0.
\]
The uniform implicit-function theorem gives
\begin{equation}
\label{eq:lmo-root-2k}
    z_\kappa(\varepsilon,y)
    =
    z_0(y)
    +
    O(\varepsilon^2),
    \qquad
    \partial_yz_\kappa
    =
    \partial_yz_0
    +
    O(\varepsilon^2).
\end{equation}
Substitution into the longitudinal boundary component proves
\eqref{eq:lmo-longitudinal-expansion}.
For the auxiliary exact-line-search calculation, write
\[
    \tau=\varepsilon^{2p-2}q
\]
and let $X(\tau)$ be the geodesic from the current point toward the LMO
point. The interior stationarity equation is
\begin{equation}
\label{eq:els-stationarity-2k}
    \left\langle
        \Log_{X(\tau)}x^\star,
        \dot X(\tau)
    \right\rangle_{X(\tau)}
    =
    0.
\end{equation}
After the same normalization, one obtains
\begin{equation}
\label{eq:normalized-line-map-2k}
    \mathscr L_\kappa(\varepsilon,y,q)
    =
    L_0(y,q)
    +
    \varepsilon^2
    L_{2,\kappa,\rho}(\varepsilon,y,q)
    +
    \varepsilon^p
    L_p^{\mathrm{bdry}}(\varepsilon,y,q).
\end{equation}
The flat minimizing root
\[
    q_0(y)=G_p(y)
\]
is positive and simple on $I$. A second uniform implicit-function argument
therefore gives
\begin{equation}
\label{eq:line-root-2k}
    q_\kappa(\varepsilon,y)
    =
    q_0(y)+O(\varepsilon^2),
    \qquad
    \partial_yq_\kappa
    =
    \partial_yq_0+O(\varepsilon^2).
\end{equation}
Restoring $\tau=\varepsilon^{2p-2}q$ proves
\eqref{eq:line-search-expansion}. Since $p\geq2$, the boundary correction
$O(\varepsilon^p)$ is never of lower order than the
$O(\varepsilon^2)$ metric correction.
\end{proof}

\begin{remark}[Role of the centered Klein placement]
\label{rem:centered-klein-placement}
The absence of an $O(u^{1/p})$ normalized metric term is not a consequence
of smoothness alone. It uses the placement $x^\star=0$: the feasible
boundary retains its exact power-$p$ contact while the ambient metric has
no linear jet at the contact point. This is why the first normalized
curvature correction begins at order $u^{2/p}$.
\end{remark}

\subsection{Persistence under the same gap-feedback short step}
\label{subsec:short-step-hard}

We now analyze the policy used in
Theorem~\ref{thm:geometry-upper-rate}. Let $L_\rho$ denote a valid uniform
smoothness constant of
\[
    f_\star(x)
    =
    \frac12d_\kappa(x,x^\star)^2
\]
on $\C_{p,\rho}$. In the hyperbolic plane, Hessian comparison gives
\begin{equation}
\label{eq:hessian-hyperbolic-squareddistance}
    \lambda_{\max}
    \bigl(
        \operatorname{Hess}f_\star(x)
    \bigr)
    =
    \max\left\{
        1,
        \kappa r\coth(\kappa r)
    \right\},
    \qquad
    r=d_\kappa(x,x^\star).
\end{equation}
Since
\[
    \diam(\C_{p,\rho})=O(\rho),
\]
we may choose
\begin{equation}
\label{eq:Lrho-expansion}
    L_\rho
    =
    1+O(\kappa^2\rho^2).
\end{equation}
RFW is run with exactly the gap-feedback rule
\begin{equation}
\label{eq:hard-short-step}
    \eta_{\mathrm{ss}}(x)
    =
    \min\left\{
        \frac{
            g(x)
        }{
            L_\rho d_\kappa(x,v(x))^2
        },
        1
    \right\}.
\end{equation}
For reference, the flat normalized transverse dynamics are described by
\begin{equation}
\label{eq:flat-slow-map}
    F_p(y)
    :=
    y^{1-q}-\frac{y}{p},
    \qquad
    q:=\frac{p}{p-1}.
\end{equation}
Let $C_p>0$ denote its positive fixed point. Direct differentiation gives
\begin{equation}
\label{eq:slow-multiplier}
    F_p'(C_p)
    =
    -\frac{2}{p-1}.
\end{equation}
Hence,
\begin{equation}
\label{eq:spectral-gap-pgt3}
    |F_p'(C_p)|<1
    \qquad\Longleftrightarrow\qquad
    p>3.
\end{equation}

\begin{lemma}[Normalized short-step dynamics]
\label{lem:short-step-expansion}
Fix $p>3$ and a compact interval $I$ containing $C_p$ in its interior.
For sufficiently small fixed $\rho>0$, uniformly for $y\in I$ and
$u\downarrow0$,
\begin{equation}
\label{eq:short-step-scale-hard}
    \eta_{\mathrm{ss}}(u,y)
    =
    u^{2(p-1)/p}
    \left[
        \frac{G_p(y)}{L_\rho}
        +
        R_{\mathrm{ss}}(u,y)
    \right],
\end{equation}
where
\begin{equation}
\label{eq:short-step-remainder}
    \|R_{\mathrm{ss}}(u,\cdot)\|_{C^1(I)}
    \leq
    C u^{2/p}.
\end{equation}
In particular, the cap in \eqref{eq:hard-short-step} is inactive for all
sufficiently small $u$.
The normalized transverse update has the form
\begin{equation}
\label{eq:short-slow-map}
    y^+
    =
    F_{p,\rho}^{\mathrm{ss}}(y)
    +
    R_y^{\mathrm{ss}}(u,y),
\end{equation}
where
\begin{equation}
\label{eq:short-limiting-map}
    \|
        F_{p,\rho}^{\mathrm{ss}}-F_p
    \|_{C^1(I)}
    \leq
    C\kappa^2\rho^2,
\end{equation}
and
\begin{equation}
\label{eq:short-state-remainder}
    \|
        R_y^{\mathrm{ss}}(u,\cdot)
    \|_{C^1(I)}
    \leq
    C u^{2/p}.
\end{equation}
\end{lemma}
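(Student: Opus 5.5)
The plan is to reduce the short-step rule to the auxiliary exact-line-search quantity $\tau_\kappa$ of Lemma~\ref{lem:uniform-hyperbolic-expansion}, and then read off the transverse update from the same normalized coordinates. The key flat observation is the following. For $f(x)=\tfrac12\|x\|_2^2$ with $L=1$, the function $\eta\mapsto f(x+\eta(v-x))$ is an exact quadratic. Its minimizer is
\[
-\frac{\langle\nabla f(x),v-x\rangle}{\|v-x\|^2}=\frac{g}{r^2},
\]
so the flat short step coincides with flat exact line search. Hence the flat normalized short-step coefficient is exactly $G_p(y)$. In the hyperbolic setting I will write
\[
\eta_{\mathrm{ss}}=\frac{1}{L_\rho}\cdot\frac{g_\kappa}{d_\kappa(x,v)^2},
\]
and then compare $g_\kappa/d_\kappa^2$ with $\tau_\kappa$.

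Step 1 is the expansion of $g_\kappa$ and $d_\kappa(x,v)^2$. Substitute the LMO root $z_\kappa(\varepsilon,y)=z_0(y)+O(\varepsilon^2)$ from \eqref{eq:lmo-root-2k} and the current point \eqref{eq:current-weighted-2k} into
\[
g_\kappa=-\langle\nabla f_\star(x),\Log_x v\rangle_x=\langle\Log_x x^\star,\Log_x v\rangle_x.
\]
Use the even metric jet \eqref{eq:normal-even-jet} at the origin, so that $\Log_x$ and the inner product differ from their Euclidean counterparts by factors $1+O(\kappa^2\rho^2\varepsilon^2)$, with the same bound after one $y$-derivative. This should give
\begin{align*}
g_\kappa&=\rho^2\varepsilon^{p}\bigl[g_0(y)+O(\varepsilon^2)\bigr],\\
d_\kappa(x,v)^2&=\rho^2\varepsilon^2\bigl[z_0(y)^2+O(\varepsilon^2)\bigr].
\end{align*}
Here the leading flat coefficients are such that $g_0/z_0^2=G_p$, which follows from the flat identity above. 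The ratio therefore has order $\varepsilon^{p-2}\cdot\varepsilon^{p}=u^{2(p-1)/p}$; I will verify the exact power by matching with \eqref{eq:line-search-expansion}. Its coefficient is $G_p(y)+O(\varepsilon^2)$ in $C^1(I)$, and since $\varepsilon^2=u^{2/p}$ this proves \eqref{eq:short-step-scale-hard}--\eqref{eq:short-step-remainder}. Because the prefactor $u^{2(p-1)/p}\to0$, the cap $\eta\le1$ is inactive for small $u$.

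Step 2 handles the update. In the Klein chart the geodesic from $x$ to $v$ is the chord, so $x^+=x+\lambda(\eta_{\mathrm{ss}})(v-x)$. The reparametrization satisfies $\lambda(\eta)=\eta\bigl(1+O(\kappa^2\rho^2\varepsilon^2)\bigr)$, as in the proof of Corollary~\ref{cor:hard-set-dguc}. Compute
\[
u^+=u-\lambda(v-x)_1/(-\rho),
\]
using \eqref{eq:lmo-longitudinal-expansion}, so that $u^+=u(1+O(u^{2(p-1)/p}))$. The transverse component is $\sigma y u^{1+\alpha_p}+\lambda(v-x)_2/\rho$, with $(v-x)_2\approx\rho\varepsilon z_\kappa$. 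Dividing by $(u^+)^{1+\alpha_p}$ gives $y^+$ as an explicit function of $y$, $z_\kappa$ and $G_p/L_\rho$, plus $O(u^{2/p})$ corrections, including the relative change $u^+/u-1$. Setting the corrections to zero but keeping $L_\rho$ defines $F^{\mathrm{ss}}_{p,\rho}$. At $L_\rho=1$ and $\kappa=0$ it reduces to the flat map, which I identify with $F_p$ in \eqref{eq:flat-slow-map} using the flat line-search/short-step coincidence. The sign flip of $\sigma$ is absorbed into the magnitude variable.

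The estimate \eqref{eq:short-limiting-map} then follows from $L_\rho=1+O(\kappa^2\rho^2)$ in \eqref{eq:Lrho-expansion}, since $F^{\mathrm{ss}}_{p,\rho}$ depends smoothly on $1/L_\rho$ on the compact interval $I$. I expect the main obstacle to be this scale bookkeeping in Step 2: checking that the $O(\varepsilon^2)$ errors, once divided by the small normalizing power $u^{1+\alpha_p}$, remain $O(u^{2/p})$ in $C^1(I)$. This requires leading-order cancellation in the transverse component, $\sigma y u^{1+\alpha_p}$ against $\lambda\varepsilon z_\kappa$, which are of the same order precisely because $\eta\sim u^{2(p-1)/p}$. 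I would handle it by writing everything in $\varepsilon$ and factoring out $\varepsilon^{2p-1}$ before estimating remainders.
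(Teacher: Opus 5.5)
Your route matches the paper's. The flat identity $\tau_0=g_0/\norm{v-x}_2^2$ (for $\tfrac12\norm{\cdot}_2^2$ with $L=1$ the short step is exact line search) identifies $G_p$. Relative $O(u^{2/p})$ expansions of $g_\kappa$ and $d_\kappa(x,v)^2$ then give \eqref{eq:short-step-scale-hard}--\eqref{eq:short-step-remainder}. Finally, $F^{\mathrm{ss}}_{p,\rho}$ differs from $F_p$ only through $L_\rho^{-1}-1=O(\kappa^2\rho^2)$, with all other geometric corrections state dependent. Your Step~2 (chord update, $\lambda(\eta)=\eta(1+O(\kappa^2\rho^2\varepsilon^2))$, factoring out $\varepsilon^{2p-1}$) is more explicit than the paper's one-line substitution and is fine, up to a sign: $x_1=-\rho u$ gives $u^+=u-\lambda(v-x)_1/\rho$.

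Step~1, however, has an error that hides the one delicate point.

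\textbf{The order of the gap.} The gap has order $\rho^2\varepsilon^{2p}$, not $\rho^2\varepsilon^{p}$. With $-x=\rho(\varepsilon^p,-\sigma y\varepsilon^{2p-1})$ and $v-x\approx\rho(\varepsilon^p(1-|z|^p/p),\varepsilon z)$, both products in $\ip{-x}{v-x}$ are $O(\rho^2\varepsilon^{2p})$. Hence $g_0/\norm{v-x}_2^2\sim\varepsilon^{2p-2}=u^{2(p-1)/p}$ directly, with no extra factor to insert.

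\textbf{Why the even jet is not enough.} The correct order shows that $\Log_x x^\star$ (length $\sim\rho\varepsilon^p$) and $\Log_x v$ (length $\sim\rho\varepsilon$) are nearly orthogonal, with cosine of order $\varepsilon^{p-1}$. So $g_\kappa=g_0[1+O(\varepsilon^2)]$ does not follow from the even metric jet alone. The bounds $g_{ij}=\delta_{ij}+O(\kappa^2\norm{\xi}^2)$ and $\Gamma=O(\kappa^2\norm{\xi})$ only give $\Log_x v=(v-x)+O(\kappa^2\rho^3\varepsilon^3)$, with no control on the direction of the error. That perturbs $g$ by $O(\kappa^2\rho^4\varepsilon^{p+3})$, a relative error $O(\kappa^2\rho^2\varepsilon^{3-p})$ that is unbounded exactly when $p>3$.

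\textbf{The repair.} Use the Klein structure already in Step~1.
\begin{enumerate}
\item Chords are geodesics, so $\Log_x v=m(x,v)(v-x)$ and $\Log_x x^\star=m(x,x^\star)(-x)$, with scalars $m(x,v)=d_\kappa(x,v)/\norm{v-x}_{g(x)}$.
\item The Klein metric $\frac{\ip{a}{b}}{1-\kappa^2\norm{x}_2^2}+\frac{\kappa^2\ip{x}{a}\ip{x}{b}}{(1-\kappa^2\norm{x}_2^2)^2}$ gives exactly $\ip{-x}{b}_{g(x)}=(1-\kappa^2\norm{x}_2^2)^{-2}\ip{-x}{b}$.
\item Hence $g_\kappa=c(x)\max_{v\in\C_{p,\rho}}m(x,v)\ip{-x}{v-x}$, with $c(x)=1+O(\kappa^2\rho^2\varepsilon^{2p})$.
\item The bracket is positive only for $\norm{v}_2\lesssim\rho\varepsilon$, where $m(x,v)=1+O(\kappa^2\rho^2\varepsilon^2)$. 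Therefore $g_\kappa=g_0[1+O(u^{2/p})]$.
\end{enumerate}
The paper takes this expansion from its centered Klein expansions without spelling out the mechanism. With this repair your Step~1 delivers it, and the rest of your argument goes through as in the paper.
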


\begin{proof}
For the flat quadratic model
\[
    f_0(z)=\frac12\|z\|_2^2,
\]
the interior exact minimizer along the chord from $x$ to $v$ is
\[
    \tau_0
    =
    -\frac{
        \langle x,v-x\rangle
    }{
        \|v-x\|_2^2
    }
    =
    \frac{g_0}{\|v-x\|_2^2}.
\]
Its normalized coefficient is $G_p(y)$.
The centered hyperbolic expansions give, uniformly in $C^1(I)$,
\[
    g_\kappa(u,y)
    =
    g_0(u,y)
    \left[
        1+O(u^{2/p})
    \right]
\]
and
\[
    d_\kappa(x,v)^2
    =
    \|v-x\|_2^2
    \left[
        1+
        O(\kappa^2\rho^2u^{2/p})
    \right].
\]
Consequently,
\[
    \frac{
        g_\kappa(u,y)
    }{
        L_\rho d_\kappa(x,v)^2
    }
    =
    u^{2(p-1)/p}
    \left[
        \frac{G_p(y)}{L_\rho}
        +
        O(u^{2/p})
    \right],
\]
which proves
\eqref{eq:short-step-scale-hard}--\eqref{eq:short-step-remainder}.
Since $2(p-1)/p>0$, the ratio tends to zero and the cap is eventually
inactive.

Substitution into the longitudinal and transverse coordinate updates gives
\eqref{eq:short-slow-map}. The fixed-domain difference between the limiting
short-step map and the flat map is generated by
\[
    L_\rho^{-1}-1
    =
    O(\kappa^2\rho^2),
\]
whereas all state-dependent geometric corrections vanish as
$u\downarrow0$. This yields
\eqref{eq:short-limiting-map} and
\eqref{eq:short-state-remainder}.
\end{proof}

\begin{lemma}[Persistence of the short-step slow tube]
\label{lem:short-step-slow-tube}
For every fixed $p>3$ and $\kappa>0$, there exists $\rho_0>0$ such that,
for every $0<\rho\leq\rho_0$, the limiting map
$F_{p,\rho}^{\mathrm{ss}}$ has an attracting positive fixed point
\[
    C_{p,\rho}^{\mathrm{ss}}
    =
    C_p+O(\kappa^2\rho^2).
\]
Moreover, there exists a nonempty invariant slow tube for the full
short-step dynamics. For every trajectory initialized in a sufficiently
small invariant sub-tube,
\[
    y_t
    =
    C_{p,\rho}^{\mathrm{ss}}
    +
    o(1),
\]
and there exist constants
$0<a_-\leq a_+<\infty$ such that, for all sufficiently large $t$,
\begin{equation}
\label{eq:short-u-recurrence}
    a_-u_t^{1+\chi}
    \leq
    u_t-u_{t+1}
    \leq
    a_+u_t^{1+\chi},
    \qquad
    \chi:=\frac{2(p-1)}{p}.
\end{equation}
\end{lemma}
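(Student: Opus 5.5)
Three ingredients go into the proof: persistence of the fixed point under $C^1$ perturbation, a perturbed contraction argument for the nonautonomous map $y\mapsto F^{\mathrm{ss}}_{p,\rho}(y)+R^{\mathrm{ss}}_y(u,y)$, and the resulting longitudinal recursion.

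\emph{Step 1: persistence of the fixed point.} Since $p>3$, \eqref{eq:spectral-gap-pgt3} gives $\lambda:=|F_p'(C_p)|=2/(p-1)<1$. Fix a closed interval $J=[C_p-\delta,C_p+\delta]\subset I$ on which $|F_p'|\le\lambda':=(1+\lambda)/2$. By \eqref{eq:short-limiting-map}, $|(F^{\mathrm{ss}}_{p,\rho})'|\le\lambda'+C\kappa^2\rho^2<1$ on $J$. Also $|F^{\mathrm{ss}}_{p,\rho}(C_p)-C_p|\le C\kappa^2\rho^2$. For $\rho\le\rho_0$, the map $F^{\mathrm{ss}}_{p,\rho}$ therefore sends $J$ into itself. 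This holds because $|F^{\mathrm{ss}}_{p,\rho}(y)-C_p|\le \lambda''\delta+C\kappa^2\rho^2\le\delta$ with $\lambda''<1$. The contraction mapping theorem then yields a unique fixed point $C^{\mathrm{ss}}_{p,\rho}$ with $|C^{\mathrm{ss}}_{p,\rho}-C_p|\le C\kappa^2\rho^2/(1-\lambda'')$, and this fixed point is attracting.

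\emph{Step 2: invariance of the slow tube.} Define the tube $\mathcal T=\{(u,y):0<u\le u_1,\ y\in J\}$. First, Lemma~\ref{lem:uniform-hyperbolic-expansion} and Lemma~\ref{lem:short-step-expansion} give the longitudinal update. The new depth is $u^+=u-\eta_{\mathrm{ss}}(v-x)_1$ up to geodesic corrections. Writing the step as $u^{\chi}[G_p/L_\rho+R_{\mathrm{ss}}]$ and the longitudinal displacement as $-u[1+B_p(y)+\dots]$, I expect
\[
u^+=u-u^{1+\chi}\bigl[H_{p,\rho}(y)+O(u^{2/p})\bigr]
\]
with $H_{p,\rho}=(G_p/L_\rho)(1+B_p)$. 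This $H_{p,\rho}$ is a continuous function and positive on $J$, as in the flat model, where $v$ lies to the left of the current depth, so $u$ decreases.

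I have to verify this sign and the positivity of $H$ from the flat normal form. This is the step I expect to be the main obstacle. The precise identification of the longitudinal coefficient is only implicit in the earlier expansions, so I would first recover it in the flat case by direct computation with the $\ell_p$ duality map (Remark~\ref{rem:objective-repair-scale}), and then perturb.

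Given positivity, choose $u_1$ so small that $C u_1^{2/p}\le(1-\lambda'')\delta/2$. Then $u^+<u\le u_1$, and
\[
|y^+-C^{\mathrm{ss}}_{p,\rho}|\le\lambda''|y-C^{\mathrm{ss}}_{p,\rho}|+Cu^{2/p},
\]
which keeps $y^+\in J$. Hence $\mathcal T$ is invariant; any smaller sub-tube around the fixed point works the same way.

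\emph{Step 3: convergence and the recursion.} On the tube, $u_t$ is decreasing and positive, and $u_{t+1}\le u_t-a_-u_t^{1+\chi}$ with $a_-=\tfrac12\min_J H$. Therefore $u_t\to0$, in fact $u_t=O(t^{-1/\chi})$. Iterating the perturbed contraction gives
\[
|y_t-C^{\mathrm{ss}}_{p,\rho}|\le\lambda''^{\,t}|y_0-C^{\mathrm{ss}}_{p,\rho}|+C\sum_{s<t}\lambda''^{\,t-1-s}u_s^{2/p}\to0,
\]
since $u_s^{2/p}\to0$; this gives $y_t=C^{\mathrm{ss}}_{p,\rho}+o(1)$. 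Finally, the two-sided bound \eqref{eq:short-u-recurrence} follows from the displayed $u^+$ expansion with $a_\pm$ taken from $\min_J H/2$ and $2\max_J H$, once $u_t$ is small enough that the $O(u^{2/p})$ term is absorbed.

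Along the way I must also check that the cap in \eqref{eq:hard-short-step} is inactive, which is given by Lemma~\ref{lem:short-step-expansion}, and that the sign $\sigma$ alternation is consistent with the normalization. Both are bookkeeping within the flat normal form.
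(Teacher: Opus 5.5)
Your proposal is correct and follows the paper's proof. The paper argues the same three steps: persistence of the attracting fixed point under the $C^1$-small perturbation (you make this explicit with the contraction mapping theorem on an interval $J$), invariance of a tube because $R^{\mathrm{ss}}_y(u,\cdot)\to0$, and positivity of the longitudinal coefficient by continuity from the flat dynamics, which the paper also only asserts.

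On the step you flagged, the paper's convention is $(v_\kappa-x)_1=u[B_p(y)+R_v]$, so the coefficient is proportional to $G_pB_p/L_\rho$, not $(G_p/L_\rho)(1+B_p)$. The flat computation gives oracle parameter $z_0=-\sigma y^{1/(p-1)}$ and oracle depth $u\,y^{p/(p-1)}/p<u$. Hence $B_p(y)\propto 1-y^{p/(p-1)}/p$, which equals $p/(p+1)>0$ at $C_p$ because $C_p^{p/(p-1)}=p/(p+1)$, and this confirms the positivity you needed.
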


\begin{proof}
By \eqref{eq:slow-multiplier},
\[
    |F_p'(C_p)|
    =
    \frac{2}{p-1}
    <1.
\]
Thus, $C_p$ is a hyperbolic attracting fixed point of the flat map. By
\eqref{eq:short-limiting-map},
$F_{p,\rho}^{\mathrm{ss}}$ is a sufficiently small $C^1$ perturbation of
$F_p$ when $\rho$ is small. The implicit-function theorem and persistence
of hyperbolic fixed points therefore give an attracting fixed point
\[
    C_{p,\rho}^{\mathrm{ss}}
    =
    C_p+O(\kappa^2\rho^2)
\]
and a uniform contraction neighborhood.

The state-dependent perturbation
\eqref{eq:short-state-remainder} tends to zero as $u\downarrow0$ and
therefore preserves a smaller invariant tube. The longitudinal update
obtained from Lemma~\ref{lem:short-step-expansion} has leading order
\[
    u-u^+
    =
    u^{1+\chi}
    \left[
        A_{p,\rho}^{\mathrm{ss}}(y)
        +
        o(1)
    \right],
\]
where
\[
    A_{p,\rho}^{\mathrm{ss}}
    \bigl(
        C_{p,\rho}^{\mathrm{ss}}
    \bigr)
    >0
\]
for sufficiently small $\rho$, by continuity from the flat dynamics.
Shrinking the tube once more gives the two-sided recurrence
\eqref{eq:short-u-recurrence}.
\end{proof}

\begin{lemma}[Discrete power law]
\label{lem:discrete-power-law}
Suppose a positive sequence satisfies
\[
    a_-u_t^{1+\chi}
    \leq
    u_t-u_{t+1}
    \leq
    a_+u_t^{1+\chi}
\]
for all sufficiently large $t$, where $\chi>0$. Then
\[
    u_t
    =
    \Theta(t^{-1/\chi}).
\]
In particular, for
\[
    \chi=\frac{2(p-1)}{p},
\]
the short-step slow tube satisfies
\begin{equation}
\label{eq:u-rate-p}
    u_t
    =
    \Theta\!\left(
        t^{-p/[2(p-1)]}
    \right).
\end{equation}
\end{lemma}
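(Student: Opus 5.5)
The plan is to run the same potential-function argument as in the proof of Theorem~\ref{thm:abstract-upper-rate}: pass to $u_t^{-\chi}$ and show that its increments stay bounded above and below by positive constants. First, the two-sided recurrence gives $u_{t+1}\le u_t(1-a_-u_t^{\chi})$. From this, $u_t$ is eventually positive and nonincreasing. In fact $u_t\to0$, because a positive limit would force the decrements to stay bounded below by a positive constant. In particular, from some index on we have $a_+u_t^{\chi}\le 1/2$. Set $z_t:=u_t^{\chi}$. Both recurrence bounds then read
\[
1-a_+z_t\le \frac{u_{t+1}}{u_t}\le 1-a_-z_t,
\]
with the left-hand side at least $1/2$.

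\emph{Lower bound on the increment.} Using $(1-z)^{-\chi}\ge 1+\chi z$ for $z\in[0,1)$, as in Theorem~\ref{thm:abstract-upper-rate}, we get
\[
u_{t+1}^{-\chi}\ge u_t^{-\chi}(1-a_-z_t)^{-\chi}\ge u_t^{-\chi}+\chi a_-.
\]

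\emph{Upper bound on the increment.} We have $u_{t+1}^{-\chi}\le u_t^{-\chi}(1-a_+z_t)^{-\chi}$. The function $z\mapsto(1-z)^{-\chi}$ is convex and smooth on $[0,1/2]$, so its chord bound gives $(1-z)^{-\chi}\le 1+Kz$ there, with $K:=2(2^{\chi}-1)$. Hence $u_{t+1}^{-\chi}\le u_t^{-\chi}+Ka_+$.

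Summing both increment bounds from the threshold index $t_1$ to $t-1$ gives
\[
u_{t_1}^{-\chi}+\chi a_-(t-t_1)\le u_t^{-\chi}\le u_{t_1}^{-\chi}+Ka_+(t-t_1).
\]
Both sides are $\Theta(t)$, so inverting yields $u_t=\Theta(t^{-1/\chi})$. Substituting $\chi=2(p-1)/p$ gives $1/\chi=p/[2(p-1)]$, which is \eqref{eq:u-rate-p}. For the short-step slow tube, the hypothesis is exactly \eqref{eq:short-u-recurrence} from Lemma~\ref{lem:short-step-slow-tube}.

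The only step needing care is the upper increment bound, since it requires $u_t^{\chi}$ to be uniformly small. So I would first establish $u_t\to0$ from the lower recurrence, before invoking the convexity (chord) estimate for $(1-z)^{-\chi}$. A finite-time collapse to $u_t=0$ cannot occur, because the lower bound $u_{t+1}\ge u_t(1-a_+u_t^{\chi})$ stays positive once $a_+u_t^{\chi}<1$. The rest is routine.
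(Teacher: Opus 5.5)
Your proof is correct and follows essentially the same route as the paper: pass to $u_t^{-\chi}$, show its increments are eventually bounded above and below by positive constants, and sum. The differences are minor. The paper obtains the increment bounds from the mean-value theorem together with $u_{t+1}/u_t\to1$, whereas you use explicit tangent and chord bounds for the convex function $(1-z)^{-\chi}$. You also prove $u_t\to0$ explicitly, a step the paper leaves implicit when it writes $O(u_t^\chi)\to0$.
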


\begin{proof}
The recurrence implies
\[
    \frac{u_t-u_{t+1}}{u_t}
    =
    O(u_t^\chi)
    \longrightarrow0,
\]
and hence,
\[
    \frac{u_{t+1}}{u_t}\longrightarrow1.
\]
Applying the mean-value theorem to
$s\mapsto s^{-\chi}$ shows that
\[
    u_{t+1}^{-\chi}
    -
    u_t^{-\chi}
\]
is bounded above and below by positive constants for all sufficiently large
$t$. Summation yields
\[
    u_t^{-\chi}
    =
    \Theta(t),
\]
and therefore,
\[
    u_t
    =
    \Theta(t^{-1/\chi}).
\]
Substituting $\chi=2(p-1)/p$ proves
\eqref{eq:u-rate-p}.
\end{proof}

\subsection{Quadratic gap conversion and same-policy sharpness}
\label{subsec:matching-lower-bound}

It remains to convert the slow state variable into objective error. Let
\[
    \xi(u,y)
    :=
    \Log_{x^\star}x(u,y).
\]
Because the metric is Euclidean to first order at $x^\star=0$, uniformly
for $y$ in the invariant tube,
\begin{equation}
\label{eq:xi-slow-expansion}
    \|\xi(u,y)\|^2
    =
    \rho^2u^2
    \bigl(
        1+o(1)
    \bigr).
\end{equation}
Indeed, the squared transverse contribution has order
\[
    u^{2(2-1/p)}
    =
    o(u^2)
\]
for $p>1$. Since
\[
    f_\star(x)
    =
    \frac12
    \|\Log_{x^\star}x\|_{x^\star}^2,
\]
we obtain
\begin{equation}
\label{eq:objective-quadratic-gap}
    f_\star(x(u,y))
    -
    f_\star(x^\star)
    =
    \frac{\rho^2}{2}
    u^2
    \bigl(
        1+o(1)
    \bigr).
\end{equation}
Thus, the objective gap is uniformly $\Theta(u^2)$ along the slow tube.

\begin{theorem}[Same-policy hyperbolic sharpness]
\label{thm:hyperbolic-short-step-lower}
For every fixed $p>3$, there exist $\kappa>0$, $\rho>0$, a compact
geodesically convex set
\[
    \C_{p,\rho}
    \subset
    \mathbb H^2_{-\kappa^2},
\]
and an initialization in its short-step slow tube such that RFW applied to
\[
    \min_{x\in\C_{p,\rho}}
    \frac12d_\kappa(x,x^\star)^2
\]
with the same gap-feedback rule \eqref{eq:short-step}, using the valid
smoothness constant $L_\rho$, satisfies
\begin{equation}
\label{eq:hyperbolic-short-step-rate}
    f_\star(x_t)-f_\star(x^\star)
    =
    \Theta\!\left(
        t^{-p/(p-1)}
    \right).
\end{equation}
Moreover, the constructed instance satisfies the geometric and
objective-side assumptions of
Theorem~\ref{thm:geometry-upper-rate}. Hence, for every fixed $p>3$, the
exponent $p/(p-1)$ of that theorem is sharp for the analyzed
gap-feedback short-step policy over the stated geometry-controlled class.
\end{theorem}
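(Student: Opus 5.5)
The plan is to assemble the theorem from the lemmas of this section. Fix $p>3$ and any $\kappa>0$, and take $\rho\le\rho_0$, where $\rho_0$ is small enough for Corollary~\ref{cor:hard-set-dguc}, Lemma~\ref{lem:uniform-hyperbolic-expansion}, Lemma~\ref{lem:short-step-expansion} and Lemma~\ref{lem:short-step-slow-tube} to hold simultaneously. This is a finite intersection of smallness conditions.

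\textbf{Step 1: the rate.} Initialize $x_0=x(u_0,y_0)$ with $u_0$ small and $y_0$ in the invariant sub-tube supplied by Lemma~\ref{lem:short-step-slow-tube}. That lemma gives the two-sided recurrence \eqref{eq:short-u-recurrence} with $\chi=2(p-1)/p$. Lemma~\ref{lem:discrete-power-law} then gives $u_t=\Theta(t^{-p/[2(p-1)]})$.

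Invariance of the tube keeps $y_t$ in a compact interval, so the quadratic gap conversion \eqref{eq:objective-quadratic-gap} applies uniformly along the trajectory. It gives $f_\star(x_t)-f_\star(x^\star)=\tfrac{\rho^2}{2}u_t^2(1+o(1))=\Theta(t^{-p/(p-1)})$, which is \eqref{eq:hyperbolic-short-step-rate}. Lemma~\ref{lem:short-step-expansion} shows the cap is eventually inactive, so the step used is the gap-feedback rule \eqref{eq:short-step} with $L=L_\rho$.

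\textbf{Step 2: the hypotheses of Theorem~\ref{thm:geometry-upper-rate}.} I would verify these one at a time.
\begin{itemize}
\item \emph{Convexity and compactness.} $\C_{p,\rho}$ is compact and geodesically convex, since it is a Euclidean convex set compactly inside the Klein disk.
\item \emph{Unique geodesic convexity.} It is uniquely geodesically convex because $\mathbb H^2$ is Hadamard.
\item \emph{Power geometry.} Corollary~\ref{cor:hard-set-dguc} gives $\C_{p,\rho}\in\DGUC_p(\alpha_{p,\kappa,\rho})$ with $\alpha_{p,\kappa,\rho}>0$.
\item \emph{Regularity and distortion.} $\Log_x$ is globally smooth on a Hadamard manifold, so every oracle configuration lies in the regularity regime of Theorem~\ref{thm:exact-jacobi-transfer}. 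Since the sectional curvature is $-\kappa^2$, Corollary~\ref{cor:negative-curvature-certificate} gives \eqref{eq:lambda-positive-upper} with $\Lambda=\kappa D/\sinh(\kappa D)>0$.
\item \emph{Convexity and smoothness of the objective.} $f_\star=\tfrac12 d_\kappa(\cdot,x^\star)^2$ is geodesically convex on a Hadamard manifold. The Hessian bound \eqref{eq:hessian-hyperbolic-squareddistance} on a set of diameter $O(\rho)$ makes it $L_\rho$-smooth along feasible geodesics, by a Taylor bound along each geodesic.
\item \emph{Quadratic error bound.} The eigenvalues of $\operatorname{Hess}f_\star$ are at least $1$, so $f_\star$ is $1$-strongly geodesically convex with $\nabla f_\star(x^\star)=0$. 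Hence $f_\star(x)-f_\star(x^\star)=\tfrac12 d(x,x^\star)^2$ exactly, and \eqref{eq:qeb} holds globally with $\mu=\sqrt2$ and $X^\star=\{x^\star\}$.
\end{itemize}

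\textbf{Step 3: conclusion.} By Step 2, Theorem~\ref{thm:geometry-upper-rate} applies to this instance and gives the upper bound $O(t^{-p/(p-1)})$. Step 1 shows this exponent is attained, so it cannot be improved for this policy on this class. This is the claimed sharpness.

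\textbf{Main obstacle.} The weakest link is Step 1's reliance on the slow-tube lemma, and in particular on the choice of initialization. The sub-tube must be entered with $u_0\le u_0(I)$ and $y_0$ close to $C^{\mathrm{ss}}_{p,\rho}$, with the sign $\sigma$ alternating consistently with the parametrization \eqref{eq:slow-coordinates}. The point I would check first is that the coordinates $(u,y)$ remain well defined along the whole orbit. Concretely, $u_{t+1}<u_t$ must hold, and the state-dependent remainder $O(u^{2/p})$ must be summably controlled, so that $y_t$ stays within the contraction neighborhood for all $t$. The lemma asserts invariance, but I would make explicit that the invariance is uniform for all $u\le u_0$. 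The other point to confirm is that the $(1+o(1))$ in the gap conversion is uniform over the tube, which gives the two-sided constants and not merely an asymptotic statement.
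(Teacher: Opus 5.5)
Your proposal is correct and follows essentially the same route as the paper. The rate comes from Lemma~\ref{lem:short-step-slow-tube}, Lemma~\ref{lem:discrete-power-law} and the quadratic gap conversion \eqref{eq:objective-quadratic-gap}, and class membership is checked via Corollary~\ref{cor:hard-set-dguc}, the Hadamard property, the identity $d_\kappa(x,x^\star)=\sqrt{2(f_\star(x)-f_\star(x^\star))}$ (giving $\mu=\sqrt2$) and the hyperbolic distortion certificate of Corollary~\ref{cor:negative-curvature-certificate}; the uniformity and invariance concerns you flag are, in the paper too, delegated entirely to the slow-tube lemma rather than re-proved here.
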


\begin{proof}
Lemma~\ref{lem:short-step-slow-tube} and
Lemma~\ref{lem:discrete-power-law} give
\[
    u_t
    =
    \Theta\!\left(
        t^{-p/[2(p-1)]}
    \right).
\]
Equation~\eqref{eq:objective-quadratic-gap} then yields
\[
    f_\star(x_t)-f_\star(x^\star)
    =
    \Theta(u_t^2)
    =
    \Theta\!\left(
        t^{-p/(p-1)}
    \right).
\]
It remains to verify membership in the class of
Theorem~\ref{thm:geometry-upper-rate}.
Corollary~\ref{cor:hard-set-dguc} gives
\[
    \C_{p,\rho}
    \in
    \DGUC_p(\alpha_{p,\kappa,\rho})
\]
with a positive parameter for sufficiently small $\rho$. Hyperbolic space
is Hadamard, so the logarithm is globally single valued and regular.
Moreover,
\[
    f_\star(x)
    =
    \frac12d_\kappa(x,x^\star)^2
\]
is smooth and geodesically strongly convex, and
\[
    d_\kappa(x,x^\star)
    =
    \sqrt{
        2\bigl(
            f_\star(x)-f_\star(x^\star)
        \bigr)
    },
\]
so the quadratic error bound holds with
\[
    \mu=\sqrt2.
\]
Finally, since $\C_{p,\rho}$ has finite diameter, the hyperbolic
logarithmic-distortion certificate gives a positive uniform lower bound on
the relevant differential factor. Thus all assumptions of
Theorem~\ref{thm:geometry-upper-rate} are satisfied.
\end{proof}
\begin{remark}[Scope of the sharpness statement]
\label{rem:sharpness-scope}
Theorem~\ref{thm:hyperbolic-short-step-lower} establishes same-policy
sharpness for the gap-feedback Riemannian Frank--Wolfe rule analyzed in
Theorem~\ref{thm:geometry-upper-rate}. For every fixed $p>3$, the
geometry-controlled problem class contains a genuinely curved instance on
which this policy attains the rate
\[
    \Theta\!\left(t^{-p/(p-1)}\right).
\]
Hence, the exponent $p/(p-1)$ in the upper bound is sharp for the
gap-feedback short-step policy over this class.
\end{remark}

\subsection{Exact line search as a reference dynamics}
\label{subsec:slow-dynamics}

The exact-line-search dynamics provides the reference model for the local
construction. The same analysis yields the corresponding rate and identifies
the flat coefficient $G_p$ governing the limiting dynamics of the
gap-feedback short-step scheme.
For exact line search, composing the oracle and line-search expansions gives
\begin{equation}
\label{eq:longitudinal-update}
    u-u^+
    =
    u^{1+\chi}
    \left[
        A_p(y)+R_u(u,y)
    \right],
    \qquad
    \chi=\frac{2(p-1)}{p},
\end{equation}
where $A_p$ is positive near $C_p$ and
\[
    |R_u(u,y)|
    \leq
    C_I(1+\kappa^2\rho^2)u^{2/p}.
\]
The normalized transverse map satisfies
\[
    y^+
    =
    F_p(y)
    +
    R_y(u,y),
\]
with
\[
    \|R_y(u,\cdot)\|_{C^1(I)}
    \longrightarrow0
    \qquad
    \text{as }u\downarrow0.
\]
Since
\[
    |F_p'(C_p)|<1
    \qquad
    (p>3),
\]
the same contraction argument yields an invariant slow tube and
\[
    u_t
    =
    \Theta\!\left(
        t^{-p/[2(p-1)]}
    \right).
\]

\begin{proposition}[Exact-line-search reference rate]
\label{prop:hyperbolic-exact-line-search}
For every fixed $p>3$, the hyperbolic construction above admits an
initialization for which exact-line-search RFW satisfies
\begin{equation}
\label{eq:hyperbolic-matching-rate}
    f_\star(x_t)-f_\star(x^\star)
    =
    \Theta\!\left(
        t^{-p/(p-1)}
    \right).
\end{equation}
\end{proposition}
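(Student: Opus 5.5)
The plan mirrors the proof of Theorem~\ref{thm:hyperbolic-short-step-lower}, with exact line search replacing the gap-feedback step. The dynamics are organized through the pair $(u,y)$ of \eqref{eq:slow-coordinates}.

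\emph{Step 1: the reduced maps.} Lemma~\ref{lem:uniform-hyperbolic-expansion} gives the LMO longitudinal expansion \eqref{eq:lmo-longitudinal-expansion} and the normalized line-search parameter \eqref{eq:line-search-expansion}, namely $\tau_\kappa=u^{\chi}[G_p(y)+R_\tau]$ with $\chi=2(p-1)/p$. I will write the updated point as $X(\tau_\kappa)$ on the Klein chord from $x$ to $v_\kappa$. Reading off its coordinates then yields the reduced maps: \eqref{eq:longitudinal-update} for $u$, and $y^+=F_p(y)+R_y(u,y)$ for $y$. To do this I compare with the flat model, in which exact line search along chords is exactly the Euclidean exact line search for $\tfrac12\|z\|^2$. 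That flat model produces $A_p$ and $F_p$.

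\emph{Step 2: the remainders.} The remainder bounds are inherited from the two implicit-function expansions. There is one extra point: the chord-to-geodesic reparametrization contributes a factor $1+O(\kappa^2\rho^2u^{2/p})$, which is the same mechanism used in Corollary~\ref{cor:hard-set-dguc}. The sign $\sigma$ alternates at each step, and the map is stated for $|y|$.

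\emph{Step 3: invariant tube and rate.} For $p>3$, \eqref{eq:spectral-gap-pgt3} makes $C_p$ a hyperbolic attracting fixed point of $F_p$. Since $\|R_y(u,\cdot)\|_{C^1(I)}\to0$, I will choose a closed interval $J\ni C_p$ on which $F_p$ contracts with factor $\lambda<1$. I then take $u_0$ small enough that $F_p(J)+R_y(u,J)\subset J$ for all $u\le u_0$. The interval $J$ must also lie where $A_p>0$. On this set \eqref{eq:longitudinal-update} gives $u^+<u$, so $u$ stays below $u_0$ and the tube $\{u\le u_0,\ y\in J\}$ is invariant. The two-sided bound $a_-u^{1+\chi}\le u-u^+\le a_+u^{1+\chi}$ follows with $a_\pm$ the extremes of $A_p$ on $J$, perturbed by $C_Iu_0^{2/p}$. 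Lemma~\ref{lem:discrete-power-law} then gives $u_t=\Theta(t^{-p/[2(p-1)]})$. Finally, \eqref{eq:objective-quadratic-gap} gives $f_\star(x_t)-f_\star(x^\star)=\Theta(u_t^2)=\Theta(t^{-p/(p-1)})$. The initialization is any point of the tube, which is nonempty.

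\emph{Main obstacle.} The hard part is Step 1, because the flat exact-line-search map must truly be $F_p$ with $A_p>0$ near $C_p$. This requires computing the flat LMO root $z_0(y)$ from the $\ell_p$ duality map and the exact quadratic line search. It also requires verifying that one step sends $y$ to $y^{1-q}-y/p$ with the sign flip, and that the step does not overshoot the chord, i.e.\ $\tau<1$. I would carry this out using the normalization of Remark~\ref{rem:objective-repair-scale}. A second, secondary check is that the hyperbolic exact line search reduces to the stationarity condition \eqref{eq:els-stationarity-2k} with a simple root. This should follow from strict geodesic convexity of $f_\star$ along the geodesic, which gives a unique minimizer that is interior for small $u$.
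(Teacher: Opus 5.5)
Your proposal is correct and follows the paper's route. The paper likewise composes the oracle and line-search expansions of Lemma~\ref{lem:uniform-hyperbolic-expansion} into \eqref{eq:longitudinal-update} and $y^+=F_p(y)+R_y(u,y)$, uses $|F_p'(C_p)|=2/(p-1)<1$ to get an invariant slow tube on which $A_p>0$, and concludes via Lemma~\ref{lem:discrete-power-law} and \eqref{eq:objective-quadratic-gap}. Your explicit invariance argument and the flat computation you flag are exactly what the paper's ``same contraction argument'' leaves implicit. The computation goes through: $G_p(y)=\bigl(1+\tfrac{p-1}{p}y^q\bigr)y^{-2/(p-1)}$, $A_p=G_p\,(1-y^q/p)$ with $A_p(C_p)>0$ because $C_p^q=p/(p+1)$, and $y^+=y^{1-q}-y/p$ with $\sigma\mapsto-\sigma$.
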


\begin{proof}
The exact-line-search slow dynamics give
\[
    u_t
    =
    \Theta\!\left(
        t^{-p/[2(p-1)]}
    \right),
\]
while \eqref{eq:objective-quadratic-gap} gives
\[
    f_\star(x_t)-f_\star(x^\star)
    =
    \Theta(u_t^2).
\]
Combining the two estimates proves the claim.
\end{proof}

\begin{remark}[Why the external-center construction does not match]
\label{rem:external-center-failure}
If the squared-distance center is moved to a point
\[
    a=\Exp_{x^\star}(\ell n)
\]
outside the feasible set, then
\[
    \nabla
    \left(
        \frac12d(\cdot,a)^2
    \right)
    (x^\star)
    =
    -\ell n
    \neq0.
\]
The objective gap along an inward normal displacement is then
$\Theta(u)$ rather than $\Theta(u^2)$. Even if the slow dynamics retain
\[
    u_t
    =
    \Theta\!\left(
        t^{-p/[2(p-1)]}
    \right),
\]
the resulting objective exponent is only
$
  \displaystyle  \frac{p}{2(p-1)}.
$
Centering the objective at $x^\star$ is therefore essential for the
same-policy matching exponent.
\end{remark}

\subsection{The critical cubic endpoint}
\label{subsec:p3-endpoint}

The restriction $p>3$ arises from the stability of the normalized transverse
dynamics. At $p=3$,
\[
    F_3'(C_3)=-1,
\]
so the flat limiting map is marginal rather than strictly contracting.
The $C^1$ perturbation argument used above therefore gives no persistence
conclusion at the cubic endpoint.

In the centered hyperbolic construction, ambient metric corrections enter
the normalized equations at order
$
    \kappa^2u^{2/3},
$
while the boundary-jet correction appearing in
\eqref{eq:normalized-lmo-map-2k} is of order
$
    u.
$
Determining the fate of the marginal multiplier therefore requires the
signed next-order expansion of the full transverse map. The present
analysis does not determine that sign. Consequently, whether the same slow
mechanism persists at $p=3$ remains open within this construction.

\section{Numerical experiments}
\label{sec:numerics}

We complement the analytical results with numerical validation of three
quantitative mechanisms developed in the paper: the sharp hyperbolic rate
under the gap-feedback short-step policy, the curvature-induced distortion
of the logarithm map, and the exact spectral certificate on the
positive-definite cone. The computations directly reproduce the quantities
appearing in the corresponding theoretical results and verify their
predicted scaling, distortion, and spectral behavior across the tested
configurations.

\subsection{Hyperbolic same-policy rate}
\label{subsec:numerics-hyperbolic}

We first consider the hyperbolic hard family of
Section~\ref{sec:lower-bound} and run RFW with the same gap-feedback
short-step policy analyzed in Theorem~\ref{thm:geometry-upper-rate}. The
numerical grid comprises $72$ configurations, obtained from
$p\in\{4,5,6,8\}$, $\rho\in\{0.04,0.08\}$,
$\kappa\in\{0.5,1,2\}$, and three initial transverse states for each
parameter triple. Theorem~\ref{thm:hyperbolic-short-step-lower} predicts
\[
    f_\star(x_t)-f_\star(x^\star)
    =
    \Theta\!\left(t^{-p/(p-1)}\right).
\]
The numerical trajectories reproduce this power law across the full grid.
The maximum absolute deviation between the fitted log--log slope and the
predicted value $-p/(p-1)$ is $2.83\times10^{-3}$. The normalized tail
$t^{p/(p-1)}(f_\star(x_t)-f_\star(x^\star))$ is correspondingly stable,
with a maximum $10$--$90$ percentile spread of $9.66\times10^{-4}$ across
the tested configurations.

The cap in the short-step rule remains inactive throughout all runs, so the
observed asymptotic behavior lies entirely in the interior short-step regime
analyzed in Section~\ref{sec:lower-bound}. Representative trajectories are
shown in Figure~\ref{fig:hyperbolic-rates}. We additionally report the
critical case $p=3$ as an endpoint diagnostic. Its behavior reflects the
loss of strict contraction predicted by the rescaled dynamics at the
critical exponent, whereas the matching-rate result corresponds to the
strictly contractive regime $p>3$ covered by
Theorem~\ref{thm:hyperbolic-short-step-lower}.

\begin{figure}[t]
\centering
\includegraphics[width=.78\linewidth]
{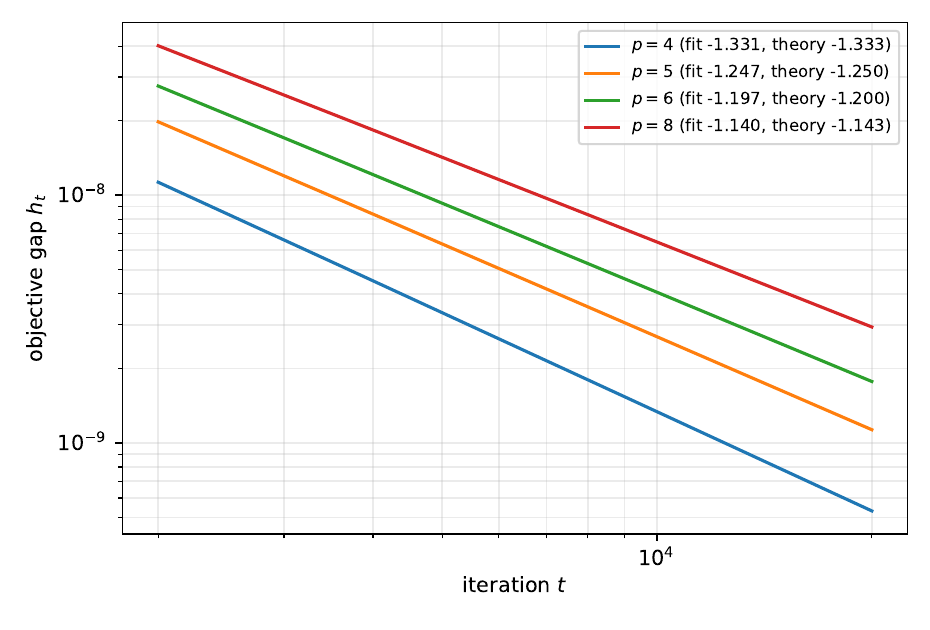}
\caption{Objective gaps for representative trajectories from the
hyperbolic hard family under the gap-feedback short-step policy.
Parentheses report the fitted and predicted log--log slopes. Across the
$72$ tested configurations, the fitted slopes reproduce the predicted
exponent $-p/(p-1)$ with maximum absolute deviation
$2.83\times10^{-3}$.}
\label{fig:hyperbolic-rates}
\end{figure}

\subsection{Jacobi distortion and the Rauch certificate}
\label{subsec:numerics-jacobi}

We next quantify the differential distortion appearing in the exact Jacobi
transfer of Section~\ref{sec:jacobi-scaling}. In constant sectional
curvature $-\kappa^2$, we approximate the logarithm differential by centered
intrinsic finite differences, with perturbations generated through the
exponential map.

The numerical grid comprises $168$ directional configurations:
$\kappa\in\{0.5,1,2\}$, eight values of $\kappa r$ between $0.05$ and $4$,
and seven directions relative to the radial mode. The exact
smallest-singular-value factor from Section~\ref{sec:certificates} is
\[
    \sigma_{\min}\!\left((D\Log_x)_v\right)
    =
    \frac{\kappa r}{\sinh(\kappa r)}.
\]
Across the full grid, the numerically differentiated values reproduce this
factor with a maximum relative error of $1.34\times10^{-11}$. The
directional distortion
$\|(D\Log_x)_v^\ast w\|_v/\|w\|_x$ likewise reproduces its closed-form value
with a maximum relative error of $8.23\times10^{-11}$.

Figure~\ref{fig:jacobi-distortion} displays the resulting directional
structure. The radial mode remains undistorted, the transverse mode follows
the Rauch factor and decreases with $\kappa r$, and intermediate directions
interpolate between these two extremes. The computation therefore confirms
the distinction captured by the LMO-restricted coefficient:
$\Lambda_{\C}^{\LMO}$ depends on the oracle-exposed directions and can be
strictly sharper than a certificate based on the worst tangent direction.

\begin{figure}[t]
\centering
\includegraphics[width=.78\linewidth]
{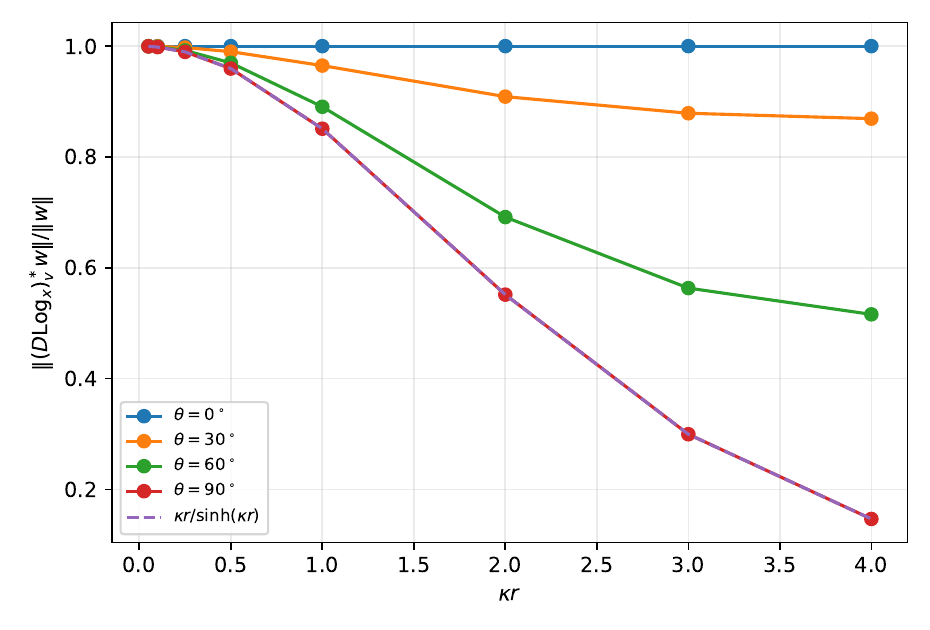}
\caption{Logarithm-map differential distortion in constant negative
curvature. The dashed curve shows the exact transverse factor
$\kappa r/\sinh(\kappa r)$. The radial direction remains undistorted,
while intermediate directions interpolate between the radial and
transverse modes.}
\label{fig:jacobi-distortion}
\end{figure}

\subsection{Affine-invariant SPD spectral certificate}
\label{subsec:numerics-spd}

We finally quantify the logarithm-differential spectrum on $\SPD(n)$
equipped with the affine-invariant Riemannian metric. Let $q$ denote the
condition number of $X^{-1/2}YX^{-1/2}$. Section~\ref{sec:certificates}
gives the exact certificate
\[
    \sigma_{\min}\!\left((D\Log_X)_Y\right)
    =
    \frac{\sqrt q\,\log q}{q-1},
\]
with continuous extension equal to $1$ at $q=1$.

The numerical grid comprises $56$ anisotropic configurations in dimensions
$n\in\{2,3,5,10\}$, spanning seven relative log-spectral spreads and two
common spectral shifts. Across the full grid, the numerically computed
smallest singular values reproduce the exact formula with a maximum
relative error of $1.04\times10^{-8}$. The complete numerical singular
spectra likewise reproduce their predicted values with a maximum relative
error of $1.93\times10^{-8}$.

We additionally consider $16$ isotropic pairs $Y=cX$. For these pairs,
$q=1$, and the logarithm differential has smallest singular value exactly
equal to one. The maximum numerical deviation from this value is
$1.51\times10^{-8}$, even though the affine-invariant distance among the
tested pairs reaches $15.811$. Thus large geodesic separation does not by
itself produce logarithmic distortion on the positive-definite cone.
Relative spectral spread, rather than distance alone, determines the exact
lower differential certificate.

Figure~\ref{fig:spd-spectral} shows the numerical values collapsing onto
the closed-form spectral curve across all tested dimensions.

\begin{figure}[t]
\centering
\includegraphics[width=.78\linewidth]
{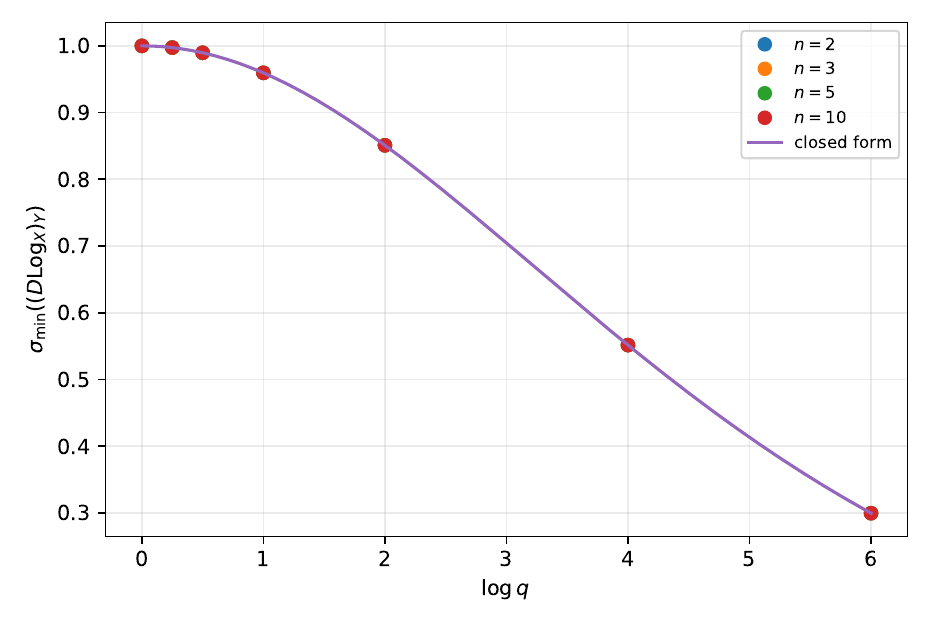}
\caption{Smallest singular value of the affine-invariant SPD logarithm
differential as a function of relative spectral spread. Finite-difference
computations in dimensions $2,3,5,10$ collapse onto the exact spectral
certificate, with maximum relative error $1.04\times10^{-8}$ across the
$56$ anisotropic configurations.}
\label{fig:spd-spectral}
\end{figure}

\begin{table}[t]
\centering
\caption{Numerical agreement with the theoretical predictions. Reported
discrepancies are maxima over the corresponding parameter grids.}
\label{tab:numerical-audit}
\begin{tabular}{llll}
\toprule
Setting & Cases & Quantity tested & Maximum discrepancy \\
\midrule
Hyperbolic same-policy rate
    & 72
    & log--log slope
    & $2.83\times10^{-3}$ \\
Jacobi distortion
    & 168
    & $\sigma_{\min}(D\Log)$
    & $1.34\times10^{-11}$ \\
SPD spectral certificate
    & 56
    & $\sigma_{\min}(D\Log)$
    & $1.04\times10^{-8}$ \\
SPD isotropic case
    & 16
    & $\sigma_{\min}(D\Log)-1$
    & $1.51\times10^{-8}$ \\
\bottomrule
\end{tabular}
\end{table}

Taken together, the computations reproduce the three quantitative
mechanisms underlying the theory. The hyperbolic trajectories attain the
predicted same-policy power law, the numerical logarithm differential
recovers the curvature-dependent Jacobi factor, and the SPD computations
recover the exact dependence on relative spectral spread, including the
undistorted isotropic regime at large geodesic distance. These results
numerically confirm the scaling, distortion, and spectral mechanisms
established in the preceding sections. The $p=3$ diagnostic further
exhibits the critical transition predicted by the rescaled dynamics, while
the matching-rate theorem applies to the strictly contractive regime
$p>3$.

\paragraph{Reproducibility.}
All numerical results reported in this section are reproducible from the
accompanying repository. The repository contains the implementation,
configuration files, numerical outputs, and figure-generation scripts for
the hyperbolic same-policy rate, the Jacobi distortion and Rauch
certificate, and the affine-invariant SPD spectral certificate. It also
provides instructions for reproducing the reported summary statistics and
figures from a clean environment. The complete reproducibility package is
available at
\href{https://github.com/aisia-research-lab/riemannian-pfo-curvature-scaling}
{the accompanying repository}.

\section{Discussion and outlook}
\label{sec:discussion}

This work develops a geometric mechanism governing the complexity of
Riemannian projection-free optimization. The feasible-set modulus quantifies
the boundary geometry available to the linear minimization oracle, while
the differential of the logarithm map transfers this geometry exactly into
Frank--Wolfe scaling through the Jacobi transfer principle. Ambient
curvature and structured spectral geometry provide quantitative
certificates for the resulting differential distortion, and objective-side
error bounds convert the induced scaling inequality into convergence rates.
Together, these components connect intrinsic feasible-set geometry,
Riemannian distortion, and projection-free complexity.

A central feature of the framework is its modularity. The exact transfer
theorem applies directly to the feasible-set modulus, with power-type
geometry providing the specialization that generates an explicit complexity
spectrum. Rauch comparison and the affine-invariant SPD analysis quantify
the ambient differential distortion independently of the subsequent
convergence argument. The framework therefore separates two geometric
effects: the intrinsic strength of the feasible region and the distortion
introduced when this geometry is represented through the Riemannian
logarithm map.

For power-type feasible sets, this mechanism yields
\[
    f(x_t)-f^\star
    =
    O\!\left(t^{-p/(p-1)}\right)
\]
under the gap-feedback short-step policy. The hyperbolic construction of
Section~\ref{sec:lower-bound} attains the matching rate
\[
    f(x_t)-f^\star
    =
    \Theta\!\left(t^{-p/(p-1)}\right)
\]
for every fixed $p>3$ on a genuinely negatively curved family satisfying
the same geometric assumptions. The construction preserves the power-type
feasible-set geometry under the rescaled Klein metric and the attracting
slow dynamics under the same short-step rule used in the upper-bound
analysis. Hence, the exponent $p/(p-1)$ is sharp for this policy over the
geometry-controlled class considered here. In particular, the matching
power law persists under genuine negative curvature and under the same
algorithmic policy.

This same-policy sharpness result points to a broader oracle-complexity
question: characterizing rates that are unavoidable across classes of
Riemannian projection-free algorithms. Such a theory would require
information-based constructions that control adaptive oracle queries and
would complement the dynamical sharpness mechanism established here.

The cubic endpoint $p=3$ presents a distinct dynamical regime. At this
value, the limiting transverse multiplier satisfies
$F_3'(C_3)=-1$, so strict contraction is lost. In the centered hyperbolic
construction, the first ambient metric correction appears at order
$u^{2/3}$, whereas the boundary-jet correction appears at order $u$.
The endpoint is therefore governed by the signed next-order term of the
transverse dynamics. Determining this term would resolve whether the slow
orbit persists with the same exponent, enters a different polynomial
regime, or exhibits a curvature-dependent transition.

The distinction between global and oracle-restricted distortion suggests a
second refinement of the complexity theory. The LMO-restricted quantity
$\Lambda_{\C}^{\LMO}$ probes only tangent directions realized by the linear
minimization oracle, whereas a global smallest-singular-value certificate
controls all tangent directions. The constant-curvature and
positive-definite examples demonstrate that these quantities can encode
substantially different geometric information. Radial directions can
remain undistorted while transverse directions deteriorate, and on
$\SPD(n)$ large geodesic separation can coexist with unit logarithmic
distortion when the relative spectral spread vanishes. Oracle-restricted
geometry can therefore yield sharper conditioning information than a
global manifold certificate.

The framework also extends naturally along several directions. Approximate
linear minimization oracles introduce an additional perturbation into the
geometry-to-scaling transfer, leading to an interaction between oracle
accuracy, feasible-set curvature, and logarithmic distortion. Replacing the
quadratic error bound by broader objective growth conditions should produce
corresponding geometry-dependent complexity regimes. Structured manifolds
beyond the positive-definite cone provide another setting in which explicit
or computable LMO-restricted distortion certificates may be derived.

These results support a geometry--scaling--complexity principle for
Riemannian projection-free optimization: intrinsic feasible-set geometry
generates oracle scaling, ambient differential geometry determines its
conditioning, and objective growth converts the resulting margin into
algorithmic complexity. The exact Jacobi transfer makes the first
connection explicit, the curvature and spectral certificates quantify the
second, and the hyperbolic construction establishes sharpness of the
resulting power-law exponent for the gap-feedback short-step policy.
Oracle-level complexity, critical-endpoint dynamics, approximate oracles,
and structured manifold classes provide natural directions for extending
this principle.

\backmatter

\section*{Declarations}

\begin{itemize}
    \item \textbf{Conflict of interest:} The author declares no competing interests.
    \item \textbf{Funding:} The author received no funding for this work.
    \item \textbf{Data availability:} No external datasets were used in this
    study. All numerical results are generated from synthetic geometric
    instances; the corresponding reproducibility materials are available
    in the repository cited in Section~\ref{sec:numerics}.
\end{itemize}

\bibliography{references}

\end{document}